\newtheorem{theorem}{Theorem}[section]
\newtheorem{corollary}[theorem]{Corollary}
\newtheorem{lemma}[theorem]{Lemma}
\newtheorem{proposition}[theorem]{Proposition}
\begin{document}

\title[Innerness of continuous derivations]{Innerness of continuous derivations on algebras of locally measurable operators}
\author{A. F. Ber}
\address{Department of Mathematics,National University of Uzbekistan,
Vuzgorodok, 100174, Tashkent, Uzbekistan}
\email{ber@ucd.uz}

\author{V. I. Chilin }
\address{Department of Mathematics, National University of Uzbekistan,
Vuzgorodok, 100174, Tashkent, Uzbekistan}
\email{chilin@ucd.uz}

\author{F. A. Sukochev}
\address{School of Mathematics and Statistics, University of New South Wales, Sydney, NSW 2052, Australia }
\email{f.sukochev@unsw.edu.au}
\date{\today}
\begin{abstract}
It is established that every derivation continuous with respect to the local measure topology acting on the $*$-algebra $LS(\mathcal{M})$ of all locally measurable operators affiliated with a von Neumann algebra $\mathcal{M}$  is necessary inner. If $\mathcal{M}$ is a properly infinite von Neumann algebra, then every derivation on $LS(\mathcal{M})$ is inner. In addition, it is proved that any derivation on $\mathcal{M}$ with values in Banach $\mathcal{M}$-bimodule of locally measurable operators is  inner.
\end{abstract}

\keywords{Derivations in von Neumann algebras, locally measurable
operators}

\subjclass{46L57, 46L51, 46L52}
\maketitle

\section{Introduction}

One of the important results of the theory of derivations in
Banach bimodules is the Theorem of J. R. Ringrose on automatic
continuity of every derivation from a $C^*$-algebra $\mathcal{M}$
into a Banach $\mathcal{M}$-bimodule \cite{Ringrose}. This theorem
extends the well-known result that every derivation of a
$C^*$-algebra $\mathcal{M}$ is automatically norm continuous
\cite{Sak}. In the case when $\mathcal{M}$ is a $AW^*$-algebra (in
particular, $W^*$-algebra), every derivation on $\mathcal{M}$ is
inner \cite{Olesen}, \cite{Sak}.
Numerous results on continuity of derivations in Banach algebras are given in
\cite{Dales}.

Significant examples of $W^*$-modules are non-commutative
rearrangement invariant spaces of measurable operators affiliated with a von Neumann algebra. At the present time the theory of rearrangement
invariant spaces is actively developed
\cite{DDdP}, \cite{K-S}, and it gives useful applications both in the geometry of Banach spaces and in the theory of unbounded operators. Every non-commutative rearrangement invariant space
is a  solid linear space in the $*$-algebra $S(\mathcal{M},\tau)$
of all $\tau$-measurable operators affiliated with a von Neumann algebra $\mathcal{M}$, where $\tau$ is a faithful normal semifinite trace on $\mathcal{M}$ \cite{Ne}. The algebra $S(\mathcal{M},\tau)$ equipped with the natural topology  $t_\tau$ of convergence in measure generated by the trace  $\tau$ is a complete metrizable topological algebra. In its turn the algebra $S(\mathcal{M},\tau)$ represents a solid $*$-subalgebra of the $*$-algebra $LS(\mathcal{M})$ of
all locally measurable operators, affiliated with a von Neumann
algebra $\mathcal{M}$ \cite{San}, \cite{Yead}. The $*$-algebras
$LS(\mathcal{M})$ and $S(\mathcal{M},\tau)$ as well as the algebra
$S(\mathcal{M})$ of measurable operators affiliated with
$\mathcal{M}$ \cite{Seg}, are useful examples of
$EW^*$-algebras of unbounded operators \cite{Dixon1}. Moreover, in \cite{CZ} it is established that every $EW^*$-algebra $\mathcal{A}$ with the bounded part $\mathcal{A}_b=\mathcal{M}$ is a solid
$*$-subalgebra in the $*$-algebra $LS(\mathcal{M})$, i.e.
$LS(\mathcal{M})$ is the greatest $EW^*$-algebra of all
$EW^*$-algebras with the bounded part coinciding with
$\mathcal{M}$. In addition, the algebra $LS(\mathcal{M})$ with the natural topology  $t(\mathcal{M})$ of convergence locally in measure is a complete topological $*$-algebra \cite{Yead}.

Every $EW^*$-algebra $(\mathcal{A},t(\mathcal{M}))$ is an example of $GB^*$-algebras, which properties are well investigated in \cite{Dixon2}. The bounded part $\mathcal{A}(\mathcal{B}_0)$ of every $GB^*$-algebra $\mathcal{A}$ is a $C^*$-algebra
\cite{Dixon2} and the algebra $\mathcal{A}$ by itself is a topological bimodule over $\mathcal{A}(\mathcal{B}_0)$. In the case when
$\mathcal{A}$ is an $EW^*$-algebra the bounded part
$\mathcal{A}(\mathcal{B}_0)$ coincides with $\mathcal{A}_b$.
A natural development of J. R. Ringrose \cite{Ringrose} and
Sakai-Kadison Theorems \cite{Sak} is the study of the properties of
continuity and innerness of derivations acting from
$\mathcal{A}(\mathcal{B}_0)$ into $\mathcal{A}$.

This problem is directly connected with researches on derivations
on algebras of unbounded operators. One of the first work in this
field became the paper by C. Brodel and G. Lassher \cite{BL},
where it was established that every derivation on a complete
$O^*$-algebra of unbounded operator is spatial. Similar results
for other classes of locally convex algebras of unbounded
operators are obtained in \cite{Inoue, Inoue2}

Since every $EW^*$-algebra $\mathcal{A}$ with the bounded part
$\mathcal{A}_b=\mathcal{M}$ is a topological $*$-algebra of unbounded operators with respect to the non locally convex topology
$t(\mathcal{M})$, the problem of innerness and
$t(\mathcal{M})$-continuity of a derivation from
$\mathcal{M}$ into $\mathcal{A}$ seems natural.

In \cite{BCS3} it is proven that each derivation
$\delta:\mathcal{M}\rightarrow \mathcal{A}$ extends up to a
derivation from  $LS(\mathcal{M})$ into $LS(\mathcal{M})$. In this
respect, we should describe properties of
$t(\mathcal{M})$-continuity and innerness of derivations
$\delta:LS(\mathcal{M})\rightarrow LS(\mathcal{M})$.

In the setting of commutative $W^*$-algebras this problem  is
fully resolved in \cite{BCS2}. In the setting of von Neumann
algebras of type $I$, a thorough treatment of this problem may be
found in \cite{AAK} and \cite{BdPS}. The papers \cite{AAK,BCS2}
contain examples of non-inner derivations of the $*$-algebra
$LS(\mathcal{M})$, which are not continuous with respect to the
topology $t(\mathcal{M})$ of convergence locally in measure on
$LS(\mathcal{M})$. On the other hand, it is shown in \cite{AAK}
that in the special case when $\mathcal{M}$ is a properly infinite
von Neumann algebra of type $I$, every derivation of
$LS(\mathcal{M})$ is continuous with respect to the local measure
topology  $t(\mathcal{M})$. Using a completely different
technique, a similar result was also obtained in \cite{BdPS} under
the additional assumption that the predual space $\mathcal{M}_*$
to $\mathcal{M}$ is separable. It is of interest to observe that
an analogue of this result (that is the continuity of an arbitrary
derivation of $(LS(\mathcal{M}),t(\mathcal{M}))$) also holds for
any von Neumann algebra $\mathcal{M}$ of type $III$ \cite{AK}. In
\cite{AK} the following problem is formulated (Problem 3): Let
$\mathcal{M}$ be a  von Neumann algebra  of type $II$ and let
$\tau$ be a faithful normal semifinite trace on $\mathcal{M}$. Is
any derivation on a $*$-algebra $S(\mathcal{M},\tau)$ equipped
with the measure topology $t_\tau$ necessarily continuous? In
\cite{Ber} this problem is solved affirmatively for a properly
infinite algebra  $\mathcal{M}$. In view of the example we
mentioned above,   a natural  problem (similar to Problem 3 from
\cite{AK}) is whether any derivation of a $*$-algebra
$LS(\mathcal{M})$ is necessarily continuous with respect to the
topology $t(\mathcal{M})$, where $\mathcal{M}$ is a properly
infinite von Neumann algebra of type $II$. In \cite{BCS3} it is
given the positive solution of this problem. In fact, in
\cite{BCS3} it is established a much stronger result that any
derivation $\delta: \mathcal{A}\rightarrow LS(\mathcal{M})$,
where $\mathcal{A}$ is any $EW^*$-subalgebra in $LS(\mathcal{M})$
with $\mathcal{A}_b=\mathcal{M}$, is necessarily continuous with
respect to the topology $t(\mathcal{M})$ in the case when $\mathcal{M}$ is properly infinite von Neumann algebra.

In this respect the problem of innerness of
$t(\mathcal{M})$-continuous derivations
$\delta:\mathcal{A}\rightarrow\mathcal{A}$ ($EW^*$-version of
Sakai-Kadison Theorem) naturally arises. For a von Neumann algebra
of type $I$ and $III$ this problem is solved in \cite{AAK,AK}. In
the present paper it is proven that for every
$t(\mathcal{M})$-continuous derivation $\delta$ acting on an
$EW^*$-algebra $\mathcal{A}$ with the bounded part
$\mathcal{A}_b=\mathcal{M}$, there exists  $a\in\mathcal{A}$, such
that $\delta(x)=ax-xa=[a,x]$ for all $x\in\mathcal{A}$, i.e.
derivation $\delta$ is inner. Moreover, it is established that every derivation on a von Neumann algebra $\mathcal{M}$ with values in a noncommutative rearrangement invariant space
$\mathcal{E}\subset S(\mathcal{M},\tau)$ is necessary inner.

The proof proceeds in several stages. In section 3 we introduce
the notion of $\lambda$-system for a self-adjoint derivation
$\delta: LS(\mathcal{M})\rightarrow LS(\mathcal{M})$ and study the
properties of this $\lambda$-system, in particular, it is given
the estimate  on the value of dimensional function for the support
of $\lambda$-system. After that in section 4 it is given the proof
of the main result of the present paper (Theorem \ref{t1}) on
innerness of every $t(\mathcal{M})$-continuous derivation $\delta:
LS(\mathcal{M})\rightarrow LS(\mathcal{M})$. In particular, in
view of the result of \cite{BCS3}, it is shown that for a properly
infinite von Neumann algebra $\mathcal{M}$ every derivation on
$LS(\mathcal{M})$ is inner.  In section 5 we give applications of
Theorem \ref{t1}, establishing the innerness of all
$t(\mathcal{M})$-continuous derivations acting on an
$EW^*$-algebra $\mathcal{A}$ with the bounded part
$\mathcal{A}_b=\mathcal{M}$. In last section 6 we introduce the
class of Banach $\mathcal{M}$-bimodules of locally measurable
operators $\mathcal{E}\subset LS(\mathcal{M})$. This class
contains all noncommutative rearrangement invariant spaces. It is
proven that every derivation
$\delta:\mathcal{M\rightarrow\mathcal{E}}$ is inner, i.e. it has a
form  $\delta(x)=[d,x]=\delta_d(x)$ for all $x\in\mathcal{M}$ and
some $d\in\mathcal{E}$, in particular, $\delta$ is a continuous
derivation from $(\mathcal{M},\|\cdot\|_{\mathcal{M}})$ into
$(\mathcal{E},\|\cdot\|_{\mathcal{E}})$. In addition, the operator
$d\in\mathcal{E}$ such that $\delta=\delta_d$ may be chosen so
that  $\|d\|_{\mathcal{E}}\leq
2\|\delta\|_{\mathcal{M}\rightarrow\mathcal{E}}$.

We use terminology and notations from the von Neumann algebra
theory \cite{KR, Sak, Tak} and the theory of locally measurable
operators from \cite{MCh, San, Yead}.

\section{Preliminaries}

Let $H$ be a Hilbert space, let $B(H)$ be the $*$-algebra of all
bounded linear operators on $H$, and let $\mathbf{1}$ be the
identity operator on $H$. Given a von Neumann algebra
$\mathcal{M}$ acting on $H$, denote by $\mathcal{Z}(\mathcal{M})$
the centre of $\mathcal{M}$ and by
$\mathcal{P}(\mathcal{M})=\{p\in\mathcal{M}:\ p=p^2=p^*\}$ the
lattice of all projections in $\mathcal{M}$. Let
$\mathcal{P}_{fin}(\mathcal{M})$ be the set of all finite projections in
$\mathcal{M}$.

A linear operator $x:\mathfrak{D}\left( x\right) \rightarrow H $,
where the domain $\mathfrak{D}\left( x\right) $ of $x$ is a linear
subspace of $H$, is said to be {\it affiliated} with $\mathcal{M}$
if $yx\subseteq xy$ for all $y$ from the commutant
$\mathcal{M}^{\prime }$ of algebra $\mathcal{M}$.

A densely-defined closed linear operator $x$ (possibly unbounded)
affiliated with $\mathcal{M}$  is said to be \emph{measurable}
with respect to $\mathcal{M}$ if there exists a sequence
$\{p_n\}_{n=1}^\infty\subset \mathcal{P}(\mathcal{M})$ such that
$p_n\uparrow \mathbf{1},\ p_n(H)\subset \mathfrak{D}(x)$ and
$p_n^\bot=\mathbf{1}-p_n\in \mathcal{P}_{fin}(\mathcal{M})$ for every
$n\in\mathbb{N}$, where
$\mathbb{N}$ is the set of all natural numbers. Let us denote by
$S(\mathcal{M})$ the set of all measurable operators.

Let $x,y\in S(\mathcal{M})$. It is well known that $x+y$ and $xy$
are densely-defined and preclosed operators. Moreover, the
closures $\overline{x+y}$ (strong sum), $\overline{xy}$ (strong
product) and $x^*$ are also measurable, and equipped with this
operations (see \cite{Seg}) $S(\mathcal{M})$ is a unital
$*$-algebra over the field $\mathbb{C}$ of complex numbers. It is
clear that $\mathcal{M}$ is a $*$-subalgebra of $S(\mathcal{M})$.

A densely-defined linear operator $x$ affiliated with
$\mathcal{M}$ is called \emph{locally measurable} with respect to
$\mathcal{M}$ if there is a sequence $\{z_n\}_{n=1}^\infty$ of
central projections in $\mathcal{M}$ such that $z_n \uparrow
\mathbf{1},\ z_n(H)\subset\mathfrak{D}(x)$ and $xz_n\in
S(\mathcal{M})$ for all $n\in\mathbb{N}$.

The set $LS(\mathcal{M})$ of all locally measurable operators
(with respect to $\mathcal{M}$) is a unital $*$-algebra over the
field $\mathbb{C}$ with respect to the same algebraic operations
as in $S(\mathcal{M})$ \cite{Yead} and $S(\mathcal{M})$ is a
$*$-subalgebra of $LS(\mathcal{M})$. If $\mathcal{M}$ is finite,
or if $\dim(\mathcal{Z}(\mathcal{M}))<\infty$, the algebras
$S(\mathcal{M})$ and $LS(\mathcal{M})$ coincide \cite[ Corollary
2.3.5 and Theorem 2.3.16]{MCh}. If a von Neumann algebra
$\mathcal{M}$ is of type $III$ and
$\dim(\mathcal{Z}(\mathcal{M}))=\infty$, then
$S(\mathcal{M})=\mathcal{M}$ and $LS(\mathcal{M})\neq \mathcal{M}$
\cite[Theorem 2.2.19, Corollary 2.3.15]{MCh}.

For every $x\in S(\mathcal{Z}(\mathcal{M}))$ there exists a
sequence
$\{z_n\}_{n=1}^\infty\subset\mathcal{P}(\mathcal{Z}(\mathcal{M}))$
such that $z_n \uparrow \mathbf{1}$ and $xz_n\in\mathcal{M}$ for
all $n\in\mathbb{N}$. This means that $x\in LS(\mathcal{M})$.
Hence, $S(\mathcal{Z}(\mathcal{M}))$ is a $*$-subalgebra of
$LS(\mathcal{M})$ and $S(\mathcal{Z}(\mathcal{M}))$ coincides with
the center  of the $*$-algebra $LS(\mathcal{M})$.

For every subset $E\subset LS(\mathcal{M})$, the sets of all
self-adjoint (resp., positive) operators in $E$ will be denoted by
$E_h$ (resp. $E_+$). The partial order in $LS(\mathcal{M})$ is
defined by its cone $LS_+(\mathcal{M})$ and is denoted by $\leq$.

Let $\{z_i\}_{i\in I}$ be a family of pairwise
orthogonal non-zero central projections from $\mathcal{M}$ with
$\sup_{i\in I}z_i=\mathbf{1}$, where $I$ is an arbitrary set of
indixes (in this case, the family $\{z_i\}_{i\in I}$ is called a
central decomposition of the unity  $\mathbf{1}$). Consider the
$*$-algebra $\prod_{i\in I}LS(z_i\mathcal{M})$ with the
coordinate-wise operations and involution and for every $x\in LS(\mathcal{M})$ set
$$\phi(x):=\{z_ix\}_{i\in I}.$$
In \cite{Saito} it is proven that the mapping $\phi$ is a $*$-isomorphism from the $*$-algebra
$LS(\mathcal{M})$ onto $\prod_{i\in I}LS(z_i\mathcal{M})$. From here immediately follows

\begin{proposition}
\label{p1}
Given any central decomposition $\{z_i\}_{i\in I}$
of the unity and any family of elements $\{x_i\}_{i\in I}$ in
$LS(\mathcal{M})$ there exists a unique element $x\in
LS(\mathcal{M})$ such that $z_ix=z_ix_i$ for all $i\in I$.
\end{proposition}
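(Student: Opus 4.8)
The plan is to deduce everything from the $*$-isomorphism $\phi\colon LS(\mathcal{M})\rightarrow \prod_{i\in I}LS(z_i\mathcal{M})$, $\phi(x)=\{z_ix\}_{i\in I}$, whose bijectivity is precisely the content of the result of \cite{Saito} quoted immediately above the statement. Since a $*$-isomorphism is in particular a bijection, the whole claim reduces to exhibiting the prescribed family as a genuine element of the codomain $\prod_{i\in I}LS(z_i\mathcal{M})$ and then reading off existence from surjectivity and uniqueness from injectivity.

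First I would check that $\{z_ix_i\}_{i\in I}$ really belongs to $\prod_{i\in I}LS(z_i\mathcal{M})$. This is immediate: for each fixed $i$ the operator $z_ix_i$ is exactly the $i$-th coordinate of $\phi(x_i)$, and hence lies in $LS(z_i\mathcal{M})$; equivalently, multiplication by the central projection $z_i$ carries $LS(\mathcal{M})$ into $LS(z_i\mathcal{M})=z_iLS(\mathcal{M})$. Thus $\{z_ix_i\}_{i\in I}$ is a bona fide point of the product algebra, and one may legitimately ask for its preimage under $\phi$.

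For existence, since $\phi$ is onto there is some $x\in LS(\mathcal{M})$ with $\phi(x)=\{z_ix_i\}_{i\in I}$; unravelling the definition of $\phi$, this says precisely that $z_ix=z_ix_i$ for every $i\in I$. For uniqueness, suppose $x,x'\in LS(\mathcal{M})$ both satisfy $z_ix=z_ix_i$ and $z_ix'=z_ix_i$ for all $i$; then $\phi(x)$ and $\phi(x')$ agree in every coordinate, so $\phi(x)=\phi(x')$, and injectivity of $\phi$ forces $x=x'$.

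I do not anticipate any genuine obstacle here, as the statement is essentially a reformulation of the bijectivity of $\phi$. The only point that requires a line of justification is the identification $z_iLS(\mathcal{M})=LS(z_i\mathcal{M})$, which is standard for reduction by a central projection (here $z_i$ is central, so $z_iLS(\mathcal{M})z_i=z_iLS(\mathcal{M})$) and is in any case already implicit in the well-definedness of $\phi$.
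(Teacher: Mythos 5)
Your proof is correct and follows exactly the route the paper intends: the proposition is stated as an immediate consequence of Saito's result that $\phi(x)=\{z_ix\}_{i\in I}$ is a $*$-isomorphism of $LS(\mathcal{M})$ onto $\prod_{i\in I}LS(z_i\mathcal{M})$, with existence coming from surjectivity and uniqueness from injectivity. The paper leaves these details unwritten ("from here immediately follows"), so your write-up simply fills in the same argument.
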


Let $x$ be a closed operator with the dense domain $\mathfrak{D}(x)$
in $H$, let $x=u|x|$ be the polar decomposition of the operator
$x$, where $|x|=(x^*x)^{\frac{1}{2}}$ and $u$ is a  partial
isometry in $B(H)$ such that $u^*u$ is the right support $r(x)$ of
$x$. It is known that $x\in LS(\mathcal{M})$ (respectively, $x\in
S(\mathcal{M})$) if and only if $|x|\in LS(\mathcal{M})$
(respectively, $|x|\in S(\mathcal{M})$) and $u\in
\mathcal{M}$~\cite[\S\S\,2.2, 2.3]{MCh}. If $x$ is a self-adjoint
operator affiliated with $\mathcal{M}$, then the spectral family
of projections $\{E_\lambda(x)\}_{\lambda\in
  \mathbb{R}}$ for $x$ belongs to
$\mathcal{M}$~\cite[\S\,2.1]{MCh}. A locally measurable operator $x$
is measurable if and only if
$E_\lambda^\bot(|x|)\in \mathcal{P}_{fin}(\mathcal{M})$ for some $\lambda>0$ \cite[\S\,2.2]{MCh}.

Recall that two projections $e,f\in\mathcal{P}(\mathcal{M})$ are called equivalent (notation: $e\sim f$) if there exists a partial isometry  $u\in\mathcal{M}$ such that $u^*u=e$ and $uu^*=f$. For every operator $x\in LS(\mathcal{M})$ the left support  $l(x)$ and the right support $r(x)$ are always equivalent \cite[Prop.2.1.7(iii)]{MCh}, in addition  $r(|x|)=r(x)=l(x^*)$.
For projections $e,f\in\mathcal{P}(\mathcal{M})$ notation $e\preceq f$ means that there exists a projection $q\in\mathcal{P}(\mathcal{M})$ such that
$e\sim q\leq f$.

Now, let us recall the definition of the local measure topology.
Firstly, let $\mathcal{M}$ be a commutative von Neumann algebra. Then
$\mathcal{M}$ is $*$-isomorphic to the $*$-algebra
$L^\infty(\Omega,\Sigma,\mu)$ of all essentially bounded
measurable complex-valued functions defined on a measure space
$(\Omega,\Sigma,\mu)$ with the measure $\mu$ satisfying the direct
sum property (we identify functions that are equal almost
everywhere) (see e.g. \cite[Ch. III, \S 1]{Tak}).  The direct sum property of a measure $\mu$ means
that the Boolean algebra of all projections of the $*$-algebra
$L^\infty(\Omega,\Sigma,\mu)$ is order complete, and for any
non-zero $ p\in \mathcal{P}(\mathcal{M})$ there exists a non-zero
projection $q\leq p$ such that $\mu(q)<\infty$.  The direct sum property
of a measure $\mu$ is equivalent to the fact that the functional
$\tau(f):=\int_\Omega f\,d\mu$ is a semi-finite normal faithful trace on the algebra $L^\infty(\Omega,\Sigma,\mu)$.

Consider the $*$-algebra
$LS(\mathcal{M})=S(\mathcal{M})=L^0(\Omega,\Sigma,\mu)$ of all
measurable almost everywhere finite complex-valued functions
defined on $(\Omega,\Sigma,\mu)$ (functions that are equal almost
everywhere are identified).  Define on $L^0(\Omega,\Sigma,\mu)$
the local measure topology $t(L^\infty(\Omega))$, that is, the
Hausdorff vector topology, whose base of neighbourhoods of zero is
given by
$$
  W(B,\varepsilon,\delta):= \{f\in\ L^0(\Omega,\, \Sigma,\, \mu)
  \colon
  \ \hbox{there exists a set} \ E\in \Sigma\
  \mbox{such that}
  $$
  $$
   E\subseteq B, \ \mu(B\setminus
  E)\leq\delta, \ f\chi_E \in L^\infty(\Omega,\Sigma,\mu), \
  \|f\chi_E\|_{{L^\infty}(\Omega,\Sigma,\mu)}\leq\varepsilon\},
$$
where $\varepsilon, \ \delta >0$, $B\in\Sigma$, $\mu(B)<\infty$, $\chi(\omega)=1,\ \omega\in E$
and $\chi(\omega)=0$, when $\omega\notin E$.

Convergence of a net $\{f_\alpha\}$ to $f$ in the topology
$t(L^\infty(\Omega))$, denoted by $f_\alpha
\stackrel{t(L^\infty(\Omega))}{\longrightarrow}f$, means that
$f_\alpha \chi_B \rightarrow f\chi_B$ in measure $\mu$ for every
$B\in \Sigma$ with $\mu(B)<\infty$. Note, that the topology
$t(L^\infty(\Omega))$ does not change if the measure $\mu$ is
replaced with an equivalent measure \cite{Yead}.

Now let $\mathcal{M}$ be an arbitrary von Neumann algebra and let
$\varphi$ be a $*$-isomorphism from $\mathcal{Z}(\mathcal{M})$
onto the $*$-algebra $L^\infty(\Omega,\Sigma,\mu)$, where $\mu$ is
a measure satisfying the direct sum property.  Denote by
$L^+(\Omega,\, \Sigma,\, m)$ the set of all measurable real-valued
functions defined on $(\Omega,\Sigma,\mu)$ and taking values in
the extended half-line $[0,\, \infty]$ (functions that are equal
almost everywhere are identified). It was shown in~\cite{Seg} that
there exists a mapping
$$
\mathcal{D}\colon
\mathcal{P}(\mathcal{M})\to L^+(\Omega,\Sigma,\mu)
$$
that possesses the following properties:
\begin{itemize}
\item[(D1)]  $\mathcal{D}(p)\in L_+^0(\Omega,\Sigma,\mu)\Longleftrightarrow p\in \mathcal{P}_{fin}(\mathcal{M})$;
\item[(D2)] $\mathcal{D}(p\vee q)=\mathcal{D}(p)+\mathcal{D}(q)$ if
  $pq=0$;
\item[(D3)] $\mathcal{D}(u^*u)=\mathcal{D}(uu^*)$ for any partial
  isometry $u\in \mathcal{M}$;
\item[(D4)] $\mathcal{D}(zp)=\varphi(z)\mathcal{D}(p)$ for any $z\in
  \mathcal{P}(\mathcal{Z}(\mathcal{M}))$ and $p\in
  \mathcal{P}(\mathcal{M})$;
\item[(D5)] if $p_\alpha, p\in
  \mathcal{P}(\mathcal{M})$, $\alpha\in A$ and $p_\alpha\uparrow p$, then
  $\mathcal{D}(p)=\sup\limits_{\alpha\in A}\mathcal{D}(p_\alpha)$.
\end{itemize}

A mapping $\mathcal{D}\colon \mathcal{P}(\mathcal{M})\to
L^+(\Omega,\Sigma,\mu)$ that satisfies properties (D1)---(D5) is
called a \textit{dimension function} on $\mathcal{P}(\mathcal{M})$.

A dimension function $\mathcal{D}$ also has the following properties \cite{Seg}:
\begin{itemize}
\item[(D6)] if $p_n\in\mathcal{P}(\mathcal{M})$, $n\in\mathbb{N}$,
then $\mathcal{D}(\sup_{n\geq 1} p_n)\leq\sum_{n=1}^\infty\mathcal{D}(p_n)$, in addition,  when $p_np_m=0$, $n\neq m$, the equality holds;
\item[(D7)] if $p_n\in\mathcal{P}_{fin}(\mathcal{M})$, $n\in\mathbb{N}$, $p_n\downarrow 0$, then $\mathcal{D}(p_n)\rightarrow 0$ almost everywhere.
\end{itemize}

For arbitrary scalars $\varepsilon , \delta >0$ and a set $B\in
\Sigma$, $\mu(B)<\infty$, we set
\begin{equation}
\label{v1}
\begin{split}
  V(B,\varepsilon, \delta ) := \{x\in LS(\mathcal{M})\colon
  \mbox{there exist} \ p\in \mathcal{P}(\mathcal{M}),\\
  z\in \mathcal{P}(\mathcal{Z}(\mathcal{M})),\
   \mbox{such that}\ xp\in \mathcal{M},
  \|xp\|_{\mathcal{M}}\leq\varepsilon, \\
  \ \varphi(z^\bot) \in W(B,\varepsilon,\delta), \
    \mathcal{D}(zp^\bot)\leq\varepsilon \varphi(z)\},
\end{split}
\end{equation}
where $\|\cdot\|_{\mathcal{M}}$ is the $C^*$-norm on $\mathcal{M}$.

It was shown in~\cite{Yead} that the system of sets
\begin{equation*}
%\label{eq_xV}
 \{x+V(B,\,\varepsilon,\,\delta)\colon \ x \in LS(\mathcal{M}),\
 \varepsilon, \ \delta >0,\ B\in\Sigma,\ \mu(B)<\infty\}
\end{equation*}
defines a Hausdorff vector topology $t(\mathcal{M})$ on
$LS(\mathcal{M})$ such that the sets
$\{x+V(B,\,\varepsilon,\,\delta)\}$, $\varepsilon, \ \delta >0$,
$B\in \Sigma$, $\mu(B)<\infty$ form a neighbourhood base of an
operator $x\in LS(\mathcal{M})$. It is known that
$(LS(\mathcal{M}), t(\mathcal{M}))$ is a complete topological
$*$-algebra, and the topology $t(\mathcal{M})$ does not depend on
a choice of dimension function $\mathcal{D}$~  and on the choice
of $*$-isomorphism $\varphi$ (see e.g. \cite[\S3.5]{MCh},
\cite{Yead}).

The topology $t(\mathcal{M})$ on $LS(\mathcal{M})$ is called the \textit{local
measure topology } (or the \textit{topology of convergence locally in measure}).
Note, that in case when $\mathcal{M}=B(H)$ the equality $LS(\mathcal{M})=\mathcal{M}$
holds \cite[\S 2.3]{MCh} and the topology $t(\mathcal{M})$ coincides with the uniform topology, generated by the $C^*$-norm $\|\cdot\|_{B(H)}$.

We will need the following criterion for convergence of nets with respect to this topology.

\begin{proposition}
\label{p2_1} \cite[\S 3.5]{MCh} (i) A net $\{p_\alpha\}_{\alpha\in
A}\subset \mathcal{P}(\mathcal{M})$ converges to zero with respect
to the topology $t(\mathcal{M})$ if and only if there is a net
$\{z_\alpha\}_{\alpha\in A}\subset
\mathcal{P}(\mathcal{Z}(\mathcal{M}))$ such that $z_\alpha
p_\alpha\in \mathcal{P}_{fin}(\mathcal{M})$ for all $\alpha\in A,\
\varphi(z_\alpha^\bot)\stackrel{t(L^\infty(\Omega))}{\longrightarrow}
0$, and $\mathcal{D}(z_\alpha
p_\alpha)\stackrel{t(L^\infty(\Omega))}{\longrightarrow} 0$, where
$t(L^\infty(\Omega))$ is the local measure topology on
$L^0(\Omega,\Sigma,\mu)$ and $\varphi$ is a $*$-isomorphism of
$\mathcal{Z}(\mathcal{M})$ onto $L^\infty(\Omega,\Sigma,\mu)$.

(ii) A net $\{x_\alpha\}_{\alpha\in A}\subset LS(\mathcal{M})$ converges to zero with respect to the topology $t(\mathcal{M})$ if and only if
$E^\bot_\lambda(|x_\alpha|)\stackrel{t(\mathcal{M})}{\longrightarrow} 0$ for every $\lambda>0$, where $E^\bot_\lambda(|x_\alpha|)$ is a
spectral projection family for the operator $|x_\alpha|$.
\end{proposition}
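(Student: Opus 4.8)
The plan is to reduce both assertions to one computation: a description of when a \emph{projection} lies in the basic neighbourhood $V(B,\varepsilon,\delta)$ of \eqref{v1}, after which $t(\mathcal{M})$-convergence translates into convergence of central-valued dimension functions in the topology $t(L^\infty(\Omega))$. First I would prove the key lemma: for $e\in\mathcal{P}(\mathcal{M})$ and $0<\varepsilon<1$, one has $e\in V(B,\varepsilon,\delta)$ if and only if there is a central projection $z$ with $\varphi(z^\bot)\in W(B,\varepsilon,\delta)$ and $\mathcal{D}(ze)\le\varepsilon\varphi(z)$. The nontrivial implication rests on the elementary fact that if $q\in\mathcal{P}(\mathcal{M})$ and $\|eq\|\le\varepsilon<1$, then $e\preceq q^\bot$: for $v\in e(H)$ one has $\|q^\bot v\|^2=\|v\|^2-\|qv\|^2\ge(1-\varepsilon^2)\|v\|^2$, so $q^\bot e$ is bounded below on $e(H)$, whence $r(q^\bot e)=e$ and, since $r(\cdot)\sim l(\cdot)$, $e\sim l(q^\bot e)\le q^\bot$. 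Multiplying this subequivalence by the central $z$ and using (D2), (D3) gives $\mathcal{D}(ze)\le\mathcal{D}(zq^\bot)\le\varepsilon\varphi(z)$; the converse is immediate on taking $q=e^\bot$, for then $eq=0$ and $q^\bot=e$. Along the way I would record, from the solidity of the sets $W(B,\varepsilon,\delta)$ and from the observation that for $\varepsilon<1$ one has $\chi_F\in W(B,\varepsilon,\delta)\Leftrightarrow\mu(B\cap F)\le\delta$, the translation that a net $\chi_{F_\alpha}\to0$ in $t(L^\infty(\Omega))$ precisely when $\mu(B\cap F_\alpha)\to0$ for every $B$ with $\mu(B)<\infty$.

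For (i), I would treat the backward implication first: given a net $z_\alpha$ satisfying (a)--(c), fix $B,\varepsilon,\delta$ with $\varepsilon<1$. By (D4), $\mathcal{D}(z_\alpha p_\alpha)=\varphi(z_\alpha)\mathcal{D}(p_\alpha)$, so $\{\mathcal{D}(p_\alpha)>\varepsilon\}\subseteq\operatorname{supp}\varphi(z_\alpha^\bot)\cup\{\mathcal{D}(z_\alpha p_\alpha)>\varepsilon\}$; conditions (b), (c) and the characteristic-function translation force $\mu\big(B\cap\{\mathcal{D}(p_\alpha)>\varepsilon\}\big)\to0$, and the key lemma then yields $p_\alpha\in V(B,\varepsilon,\delta)$ eventually. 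For the forward implication I would simply exhibit the net: put $z_\alpha=\varphi^{-1}\big(\chi_{\{\mathcal{D}(p_\alpha)\le1\}}\big)$. Then $\mathcal{D}(z_\alpha p_\alpha)\le1$ a.e., so $z_\alpha p_\alpha\in\mathcal{P}_{fin}(\mathcal{M})$ by (D1); and since $p_\alpha\to0$ forces, via the key lemma, $\mu(B\cap\{\mathcal{D}(p_\alpha)>\varepsilon\})\to0$ for each $\varepsilon\in(0,1)$, both $\varphi(z_\alpha^\bot)=\chi_{\{\mathcal{D}(p_\alpha)>1\}}$ and $\mathcal{D}(z_\alpha p_\alpha)=\mathcal{D}(p_\alpha)\chi_{\{\mathcal{D}(p_\alpha)\le1\}}$ tend to $0$ in $t(L^\infty(\Omega))$, which is exactly (a)--(c).

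For (ii) I would use the spectral bound $\|x_\alpha E_\lambda(|x_\alpha|)\|\le\lambda$ and the identity $E_\lambda^\bot(|x_\alpha|)=E_\lambda(|x_\alpha|)^\bot$. If $x_\alpha\to0$, fix $\lambda$ and $B,\varepsilon,\delta$; taking $\varepsilon'=\min(\varepsilon,\lambda/2)$ and using $x_\alpha\in V(B,\varepsilon',\delta)$ eventually, the membership supplies $p,z$ with $\|x_\alpha p\|\le\varepsilon'<\lambda$ and $\mathcal{D}(zp^\bot)\le\varepsilon'\varphi(z)$. For $v\in f(H)\cap p(H)$, where $f:=E_\lambda^\bot(|x_\alpha|)$, one has $\||x_\alpha|v\|=\|x_\alpha pv\|\le\varepsilon'\|v\|$ while $\||x_\alpha|v\|\ge\lambda\|v\|$, so $v=0$; thus $f\wedge p=0$ and $f\preceq p^\bot$ by the parallelogram law, giving $\mathcal{D}(zf)\le\mathcal{D}(zp^\bot)\le\varepsilon\varphi(z)$ and $\varphi(z^\bot)\in W(B,\varepsilon',\delta)\subseteq W(B,\varepsilon,\delta)$. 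The key lemma then yields $f\in V(B,\varepsilon,\delta)$ eventually. Conversely, if $E_\lambda^\bot(|x_\alpha|)\to0$ for every $\lambda$, fix $B,\varepsilon,\delta$ and set $p=E_\varepsilon(|x_\alpha|)$, so $x_\alpha p\in\mathcal{M}$ with $\|x_\alpha p\|\le\varepsilon$; applying the key lemma to $E_\varepsilon^\bot(|x_\alpha|)=p^\bot$ (which tends to $0$) produces a central $z$ with $\varphi(z^\bot)\in W(B,\varepsilon,\delta)$ and $\mathcal{D}(zp^\bot)\le\varepsilon\varphi(z)$, whence $x_\alpha\in V(B,\varepsilon,\delta)$ eventually.

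The main obstacle is the key lemma, and within it the passage from the norm bound $\|eq\|\le\varepsilon<1$ to a clean subequivalence $e\preceq q^\bot$ that survives multiplication by a central projection, so that the dimension-function inequalities are preserved; once this is in place the two convergence criteria follow by routine translation into $t(L^\infty(\Omega))$, the only real care being to check that a \emph{single} net $z_\alpha$, namely the spectral cut $\{\mathcal{D}(p_\alpha)\le1\}$, simultaneously controls finiteness, the smallness of $z_\alpha^\bot$, and the smallness of $\mathcal{D}(z_\alpha p_\alpha)$.
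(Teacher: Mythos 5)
The paper itself contains no proof of this proposition: it is quoted directly from \cite[\S 3.5]{MCh}, so there is no in-paper argument to measure you against, and your proof must stand on its own --- which it does. Your reduction of both parts to a single \emph{key lemma} (for a projection $e$ and $0<\varepsilon<1$, membership $e\in V(B,\varepsilon,\delta)$ is equivalent to the existence of a central projection $z$ with $\varphi(z^\bot)\in W(B,\varepsilon,\delta)$ and $\mathcal{D}(ze)\le\varepsilon\varphi(z)$) is the right organizing idea, and the crucial step is handled correctly: from $\|eq\|\le\varepsilon<1$ you get $\|q^\bot v\|^2\ge(1-\varepsilon^2)\|v\|^2$ on $e(H)$, hence $r(q^\bot e)=e$, hence $e\sim l(q^\bot e)\le q^\bot$, and this subequivalence survives multiplication by a central $z$ (conjugating the partial isometry by $z$) so that (D2)--(D4) give $\mathcal{D}(ze)\le\mathcal{D}(zq^\bot)$; the converse via $q=e^\bot$ is indeed trivial. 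The characteristic-function translation of $t(L^\infty(\Omega))$-convergence, the spectral cut $z_\alpha=\varphi^{-1}\bigl(\chi_{\{\mathcal{D}(p_\alpha)\le 1\}}\bigr)$ in the forward direction of (i), and the use of $f\wedge p=0$ plus Kaplansky's parallelogram law in (ii) are all correct. Three small points deserve explicit mention if you write this up. First, in the backward implication of (i) you invoke the key lemma without naming the witness; you should exhibit the central projection $z=\varphi^{-1}\bigl(\chi_{\{\mathcal{D}(p_\alpha)\le\varepsilon\}}\bigr)$, exactly as you do later, since the key lemma requires producing some $z$. Second, in the forward direction of (ii) the spectral lower bound $\||x_\alpha|v\|\ge\lambda\|v\|$ needs $v\in\mathfrak{D}(|x_\alpha|)$; this is fine because $x_\alpha p\in\mathcal{M}$ entails $p(H)\subseteq\mathfrak{D}(x_\alpha)=\mathfrak{D}(|x_\alpha|)$, and $\||x_\alpha|pv\|=\|x_\alpha pv\|$ by the polar decomposition, but the domain issue should be said aloud. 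Third, your tacit reductions to $\varepsilon<1$ rest on the monotonicity $V(B,\varepsilon_1,\delta)\subseteq V(B,\varepsilon_2,\delta)$ for $\varepsilon_1\le\varepsilon_2$, which is immediate from the definition but worth recording. With these polishings your argument is a complete, self-contained proof of a statement the paper only cites.
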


Since the involution is continuous in the topology
$t(\mathcal{M})$, the set $LS_h(\mathcal{M})$ is closed in
$(LS(\mathcal{M}),t(\mathcal{M}))$. The cone $LS_+(\mathcal{M})$
of positive elements is also closed in
$(LS(\mathcal{M}),t(\mathcal{M}))$ \cite{Yead}.

Using Proposition \ref{p2_1} it is established the following

\begin{proposition}
\label{p2} \cite[Prop.2.3]{BCS3}
If $x_\alpha\in LS(\mathcal{M}),\ 0\neq z \in
\mathcal{P}(\mathcal{Z}(\mathcal{M}))$, then
$$zx_\alpha\stackrel{t(\mathcal{M})}{\longrightarrow} 0\Longleftrightarrow zx_\alpha\stackrel{t(z\mathcal{M})}{\longrightarrow} 0.$$
\end{proposition}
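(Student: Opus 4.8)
The plan is to strip the statement down, via Proposition \ref{p2_1}(ii), to a question about a net of projections lying in the corner $z\mathcal{M}$, and then to verify the criterion of Proposition \ref{p2_1}(i) in each of the two algebras by transferring the witnessing central projections back and forth through the elementary bijection between $\mathcal{P}(\mathcal{Z}(\mathcal{M}))$ and $\mathcal{P}(\mathcal{Z}(z\mathcal{M}))$.

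First I would observe that, $z$ being central, $|zx_\alpha|=z|x_\alpha|$, and that for every $\lambda>0$ the spectral projection satisfies $E^\bot_\lambda(|zx_\alpha|)=z\,E^\bot_\lambda(|x_\alpha|)$: on $z^\bot(H)$ the operator $z|x_\alpha|$ vanishes and so contributes nothing to the spectral part strictly above $\lambda>0$. Applying Proposition \ref{p2_1}(ii) in $\mathcal{M}$ and in $z\mathcal{M}$ respectively, the convergence $zx_\alpha\stackrel{t(\mathcal{M})}{\longrightarrow}0$ is equivalent to $q_\alpha:=z\,E^\bot_\lambda(|x_\alpha|)\stackrel{t(\mathcal{M})}{\longrightarrow}0$ for every $\lambda>0$, while $zx_\alpha\stackrel{t(z\mathcal{M})}{\longrightarrow}0$ is equivalent to $q_\alpha\stackrel{t(z\mathcal{M})}{\longrightarrow}0$ for every $\lambda>0$. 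Since each $q_\alpha\in\mathcal{P}(z\mathcal{M})$, it suffices to prove for an arbitrary net $\{q_\alpha\}\subset\mathcal{P}(z\mathcal{M})$ that $q_\alpha\stackrel{t(\mathcal{M})}{\longrightarrow}0\Longleftrightarrow q_\alpha\stackrel{t(z\mathcal{M})}{\longrightarrow}0$.

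Next I would fix the data describing $z\mathcal{M}$ as a von Neumann algebra acting on $z(H)$. Its centre is $\mathcal{Z}(z\mathcal{M})=z\mathcal{Z}(\mathcal{M})$, the restriction of $\varphi$ identifies it with $\varphi(z)L^\infty(\Omega,\Sigma,\mu)=L^\infty(\Omega_z,\Sigma_z,\mu)$, where $\Omega_z$ is the support of $\varphi(z)$ (carrying the restricted measure, which again enjoys the direct sum property), and the restriction $\mathcal{D}_z$ of $\mathcal{D}$ to $\mathcal{P}(z\mathcal{M})$ is a dimension function on $z\mathcal{M}$ taking values in $\varphi(z)L^+(\Omega,\Sigma,\mu)$; moreover a projection $q\leq z$ is finite in $z\mathcal{M}$ if and only if it is finite in $\mathcal{M}$. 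The single point requiring care, and the main (though routine) obstacle, is that for functions supported on $\Omega_z$ convergence to $0$ in $t(L^\infty(\Omega))$ is the same as convergence to $0$ in $t(L^\infty(\Omega_z))$; this is immediate from the description of both topologies by convergence in measure on sets of finite measure, once those sets are intersected with $\Omega_z$.

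Finally I would apply Proposition \ref{p2_1}(i) in each direction. For the forward implication, starting from a witnessing net $\{z_\alpha\}\subset\mathcal{P}(\mathcal{Z}(\mathcal{M}))$ with $z_\alpha q_\alpha\in\mathcal{P}_{fin}(\mathcal{M})$, $\varphi(z_\alpha^\bot)\stackrel{t(L^\infty(\Omega))}{\longrightarrow}0$ and $\mathcal{D}(z_\alpha q_\alpha)\stackrel{t(L^\infty(\Omega))}{\longrightarrow}0$, I set $z'_\alpha:=z z_\alpha\in\mathcal{P}(\mathcal{Z}(z\mathcal{M}))$; since $q_\alpha\leq z$ one has $z'_\alpha q_\alpha=z_\alpha q_\alpha$, the complement of $z'_\alpha$ in $z\mathcal{M}$ is $z-z'_\alpha=z z_\alpha^\bot$ with $\varphi(z)\varphi(z_\alpha^\bot)\leq\varphi(z_\alpha^\bot)\to 0$, and $\mathcal{D}_z(z'_\alpha q_\alpha)=\mathcal{D}(z_\alpha q_\alpha)\to 0$, so Proposition \ref{p2_1}(i) in $z\mathcal{M}$ gives $q_\alpha\stackrel{t(z\mathcal{M})}{\longrightarrow}0$. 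For the converse, starting from a witnessing net $\{\tilde z_\alpha\}\subset\mathcal{P}(z\mathcal{Z}(\mathcal{M}))$ in $z\mathcal{M}$, I set $z_\alpha:=\tilde z_\alpha+z^\bot\in\mathcal{P}(\mathcal{Z}(\mathcal{M}))$; then $z^\bot q_\alpha=0$ gives $z_\alpha q_\alpha=\tilde z_\alpha q_\alpha\in\mathcal{P}_{fin}(\mathcal{M})$, the complement is $z_\alpha^\bot=z-\tilde z_\alpha$ so that $\varphi(z_\alpha^\bot)\to 0$ by the compatibility noted above, and $\mathcal{D}(z_\alpha q_\alpha)=\mathcal{D}_z(\tilde z_\alpha q_\alpha)\to 0$, whence Proposition \ref{p2_1}(i) in $\mathcal{M}$ yields $q_\alpha\stackrel{t(\mathcal{M})}{\longrightarrow}0$. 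Combining the two directions with the reduction above completes the proof.
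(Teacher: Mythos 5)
Your proof is correct, and it follows essentially the route the paper indicates: the paper gives no proof of Proposition \ref{p2} itself (it is quoted from \cite{BCS3}), but it states that the result is established using Proposition \ref{p2_1}, which is exactly your strategy — reduce to a net of projections in $z\mathcal{M}$ via Proposition \ref{p2_1}(ii), then transfer the witnessing central projections of Proposition \ref{p2_1}(i) between $\mathcal{M}$ and $z\mathcal{M}$. The delicate points — the identification $E^\bot_\lambda(|zx_\alpha|)=zE^\bot_\lambda(|x_\alpha|)$ for $\lambda>0$, the fact that finiteness of a projection below $z$ is the same in $\mathcal{M}$ and in $z\mathcal{M}$, and the compatibility of convergence in $t(L^\infty(\Omega))$ and $t(L^\infty(\Omega_z))$ for functions supported on $\Omega_z$ — are all identified and handled correctly in your argument.
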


Moreover from Proposition \ref{p2_1} immediately follows

\begin{corollary}
\label{cp2_1} If $\{z_\alpha\}_{\alpha\in A}\subset \mathcal{P}(\mathcal{Z}(\mathcal{M}))$ and $z_\alpha\downarrow 0$ then
$z_\alpha\stackrel{t(\mathcal{M})}{\longrightarrow} 0$.
\end{corollary}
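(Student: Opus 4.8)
The plan is to verify the criterion of Proposition \ref{p2_1}(i) directly, choosing the auxiliary central net in the simplest possible way. Since the net $\{z_\alpha\}_{\alpha\in A}$ consists of central projections, I would apply the criterion with $p_\alpha := z_\alpha$ and take the accompanying net of central projections to be $z_\alpha^\bot$. With this choice $z_\alpha^\bot z_\alpha = 0 \in \mathcal{P}_{fin}(\mathcal{M})$ for every $\alpha$, so the first requirement of the criterion holds trivially, and $\mathcal{D}(z_\alpha^\bot z_\alpha) = \mathcal{D}(0) = 0$, so the third requirement (convergence of the dimension functions to zero in $t(L^\infty(\Omega))$) also holds trivially. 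Everything therefore reduces to the single remaining condition $\varphi\big((z_\alpha^\bot)^\bot\big) = \varphi(z_\alpha) \stackrel{t(L^\infty(\Omega))}{\longrightarrow} 0$.

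The remaining step is a purely commutative statement about $L^\infty(\Omega,\Sigma,\mu)$. Because $\varphi$ is a $*$-isomorphism between von Neumann algebras it is normal, hence order continuous on projection lattices; consequently $z_\alpha \downarrow 0$ in $\mathcal{P}(\mathcal{Z}(\mathcal{M}))$ forces $\varphi(z_\alpha) \downarrow 0$ in $\mathcal{P}(L^\infty(\Omega,\Sigma,\mu))$. Writing $\varphi(z_\alpha) = \chi_{B_\alpha}$, I then need to check that $\chi_{B_\alpha}\chi_B \to 0$ in measure $\mu$ for every $B \in \Sigma$ with $\mu(B) < \infty$, which is exactly the meaning of convergence to zero in $t(L^\infty(\Omega))$. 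Since the characteristic functions $\chi_{B_\alpha}\chi_B$ decrease a.e.\ to zero and are dominated by $\chi_B \in L^1(\Omega,\Sigma,\mu)$, the decreasing net $\mu(B_\alpha\cap B) = \int_B \chi_{B_\alpha}\,d\mu$ tends to $0$; as $\chi_{B_\alpha}\chi_B$ is $\{0,1\}$-valued, $\mu(\{\chi_{B_\alpha}\chi_B > \varepsilon\}) = \mu(B_\alpha\cap B) \to 0$ for each $\varepsilon \in (0,1)$, which is the required convergence in measure.

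I do not expect a genuine obstacle here: the only point requiring care is the translation between the lattice statement $z_\alpha \downarrow 0$ and the measure-theoretic convergence $\varphi(z_\alpha) \to 0$ locally in measure, and this rests on the normality of $\varphi$ together with the direct sum property of $\mu$, which guarantees order completeness of the Boolean algebra of projections so that the infimum of the net is well defined and computed pointwise a.e. Once that translation is in place, the choice $z_\alpha^\bot$ collapses both the support condition and the dimension-function condition to trivialities, and the corollary follows immediately from Proposition \ref{p2_1}(i).
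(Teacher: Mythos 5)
Your reduction is exactly the intended one: taking $p_\alpha = z_\alpha$ and the auxiliary central net $z_\alpha^\bot$ in Proposition \ref{p2_1}(i) makes the finiteness and dimension-function conditions trivial, and the paper indeed treats the corollary as an immediate consequence of that proposition in just this way. The gap is in your justification of the remaining commutative statement $\varphi(z_\alpha)\stackrel{t(L^\infty(\Omega))}{\longrightarrow}0$. The corollary concerns a net over an arbitrary directed set $A$, and both ingredients you invoke --- ``the $\chi_{B_\alpha}\chi_B$ decrease a.e.\ to zero'' and dominated convergence --- are sequence facts that fail for nets. First, $z_\alpha\downarrow 0$ only says the lattice infimum is $0$; the pointwise intersection $\bigcap_{\alpha}B_\alpha$ need not be measurable, and lattice and pointwise infima of uncountable families of sets are genuinely different things (sets are only defined modulo null sets). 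Second, dominated convergence is simply false for nets: with $A$ the finite subsets of $[0,1]$ ordered by inclusion and $B_\alpha=[0,1]\setminus\alpha$, the net $\chi_{B_\alpha}$ decreases pointwise to $0$ everywhere and is dominated by $\chi_{[0,1]}\in L^1$, yet $\int\chi_{B_\alpha}\,d\mu=1$ for every $\alpha$ (consistently, its lattice infimum is $\mathbf{1}$, not $0$, which also illustrates the first point).

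The repair is to argue at the level of the Boolean algebra rather than pointwise. The paper records that the direct sum property of $\mu$ is equivalent to $\tau(f)=\int_\Omega f\,d\mu$ being a faithful normal semifinite trace on $L^\infty(\Omega,\Sigma,\mu)$, and normality (complete additivity) is precisely what you need. For fixed $B$ with $\mu(B)<\infty$, one has $\inf_\alpha\bigl(\chi_{B_\alpha}\chi_B\bigr)=\chi_B\wedge\inf_\alpha\chi_{B_\alpha}=0$, so the projections $\chi_{B\setminus B_\alpha}=\chi_B-\chi_{B_\alpha\cap B}$ increase with lattice supremum $\chi_B$; normality of $\tau$ then gives $\mu(B\setminus B_\alpha)\uparrow\mu(B)<\infty$, hence $\mu(B_\alpha\cap B)\downarrow 0$, which is exactly the convergence locally in measure you wanted. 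With that substitution your proof is complete and coincides with the argument the paper has in mind.
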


Let us mention the following important property of the topology $t(\mathcal{M})$.

\begin{proposition}
\label{p2_2}
The von Neumann algebra $\mathcal{M}$ is everywhere dense in  $(LS(\mathcal{M}),t(\mathcal{M}))$.
\end{proposition}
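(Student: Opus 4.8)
The plan is to show directly that every $x\in LS(\mathcal M)$ lies in the $t(\mathcal M)$-closure of $\mathcal M$ by exhibiting, for each basic neighbourhood $V(B,\varepsilon,\delta)$ of zero (where we may assume $0<\varepsilon<1$, since such neighbourhoods are cofinal), an element $a\in\mathcal M$ with $x-a\in V(B,\varepsilon,\delta)$; here the set $V(B,\varepsilon,\delta)$ is symmetric, so this is equivalent to $a\in x+V(B,\varepsilon,\delta)$. First I would take the polar decomposition $x=u|x|$ with $u\in\mathcal M$ and put $e_n:=E_n(|x|)$, the spectral projection of $|x|$ for $[0,n]$. Since $|x|e_n$ is bounded with $\||x|e_n\|_{\mathcal M}\le n$ and $u\in\mathcal M$, the truncation $a:=xe_n=u|x|e_n$ belongs to $\mathcal M$ and $x-a=xe_n^\bot$. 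Testing membership of $xe_n^\bot$ in $V(B,\varepsilon,\delta)$ through the choice $p:=e_n$ is convenient: then $(xe_n^\bot)e_n=0\in\mathcal M$ has norm $0\le\varepsilon$, so the first requirement in \eqref{v1} holds automatically, and one is left to produce a central projection $z$ with $\varphi(z^\bot)\in W(B,\varepsilon,\delta)$ and $\mathcal D(ze_n^\bot)\le\varepsilon\varphi(z)$, where now $p^\bot=e_n^\bot=E_n^\bot(|x|)$.

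Here I would invoke property (D4): for a central projection $z$ one has $\mathcal D(ze_n^\bot)=\varphi(z)\,\mathcal D(e_n^\bot)$. Writing $f_n:=\mathcal D(E_n^\bot(|x|))\in L^+(\Omega,\Sigma,\mu)$ and choosing $z:=\varphi^{-1}(\chi_{\{f_n\le\varepsilon\}})$, the required inequality $\mathcal D(ze_n^\bot)=\chi_{\{f_n\le\varepsilon\}}f_n\le\varepsilon\chi_{\{f_n\le\varepsilon\}}=\varepsilon\varphi(z)$ holds pointwise, hence automatically. Since $\varphi(z^\bot)=\chi_{\{f_n>\varepsilon\}}$ is a characteristic function and $\varepsilon<1$, a direct inspection of the definition of $W(B,\varepsilon,\delta)$ shows that $\varphi(z^\bot)\in W(B,\varepsilon,\delta)$ is equivalent to $\mu(B\cap\{f_n>\varepsilon\})\le\delta$. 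Thus everything reduces to the single key assertion that $f_n=\mathcal D(E_n^\bot(|x|))\stackrel{t(L^\infty(\Omega))}{\longrightarrow}0$ as $n\to\infty$; once this is known, choosing $n$ large yields $\mu(B\cap\{f_n>\varepsilon\})\le\delta$ and hence $x-a\in V(B,\varepsilon,\delta)$.

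To prove the key assertion I would exploit the local measurability of $|x|$: there is a sequence $\{z_k\}\subset\mathcal P(\mathcal Z(\mathcal M))$ with $z_k\uparrow\mathbf 1$ and $z_k|x|\in S(\mathcal M)$. For fixed $k$, the projection $z_kE_n^\bot(|x|)=E_n^\bot(z_k|x|)$ lies in $\mathcal P_{fin}(\mathcal M)$ for all large $n$ and decreases to $0$ as $n\to\infty$ (the domain of $|x|$ being dense forces $E_n^\bot(|x|)\downarrow 0$); hence by (D7) together with (D4), $\varphi(z_k)f_n=\mathcal D(z_kE_n^\bot(|x|))\to 0$ almost everywhere as $n\to\infty$. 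Now fix $B$ with $\mu(B)<\infty$ and $\varepsilon,\delta>0$. Since $\varphi$ is normal we have $\varphi(z_k)\uparrow 1$; writing $\varphi(z_k)=\chi_{\Omega_k}$ with $\Omega_k\in\Sigma$ increasing, dominated convergence gives $\mu(B\setminus\Omega_k)\to 0$, so I pick $k$ with $\mu(B\setminus\Omega_k)\le\delta/2$. On the finite-measure set $B\cap\Omega_k$ the a.e. convergence $f_n=\varphi(z_k)f_n\to 0$ upgrades to convergence in measure, so there is $N$ with $\mu(\{\omega\in B\cap\Omega_k:f_n(\omega)>\varepsilon\})\le\delta/2$ for all $n\ge N$. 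Adding the two estimates yields $\mu(B\cap\{f_n>\varepsilon\})\le\delta$ for $n\ge N$, which is precisely $f_n\to 0$ in $t(L^\infty(\Omega))$.

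I expect the main obstacle to be exactly this key assertion: because $x$ is only \emph{locally} measurable, the projections $E_n^\bot(|x|)$ need not be finite, so one cannot apply (D7) globally and must instead localise along the central sequence $z_k\uparrow\mathbf 1$ and then recombine. The delicate point is that the per-$k$ convergence is merely almost everywhere, whereas the target topology $t(L^\infty(\Omega))$ is convergence locally in measure and $\Omega$ need not be $\sigma$-finite; the two-part split of $B$ (a piece of measure at most $\delta/2$ outside $\Omega_k$, and a finite-measure piece where a.e. convergence gives convergence in measure) is what circumvents any appeal to metrizability or to a diagonal argument.
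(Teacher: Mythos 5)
Your proof is correct, and it takes a genuinely different route from the paper's. The paper proves density in two stages through $S(\mathcal{M})$: first, for $x\in LS(\mathcal{M})$, the central projections $z_n\uparrow\mathbf{1}$ with $xz_n\in S(\mathcal{M})$ give $xz_n\stackrel{t(\mathcal{M})}{\longrightarrow}x$ via Corollary \ref{cp2_1}; second, for $x\in S(\mathcal{M})$, projections $p_n\uparrow\mathbf{1}$ with $p_n^\bot\in\mathcal{P}_{fin}(\mathcal{M})$ and $xp_n\in\mathcal{M}$ give $xp_n\stackrel{t(\mathcal{M})}{\longrightarrow}x$ via (D7) and Proposition \ref{p2_1}(i). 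Both stages tacitly use that $(LS(\mathcal{M}),t(\mathcal{M}))$ is a topological algebra, so that convergence of projections to $\mathbf{1}$ can be multiplied by the fixed element $x$. You instead perform a single truncation by the spectral projections $e_n=E_n(|x|)$ and check $x-xe_n\in V(B,\varepsilon,\delta)$ by hand from the definition (\ref{v1}), reducing everything to the assertion that $\mathcal{D}(E_n^\bot(|x|))\to 0$ in $t(L^\infty(\Omega))$; this you prove by localizing along the central sequence of local measurability (so that (D7) applies to the finite projections $z_kE_n^\bot(|x|)$) and then splitting $B$ into a small piece outside $\Omega_k$ and a finite-measure piece on which a.e.\ convergence upgrades to convergence in measure. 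What the paper's route buys is brevity: it reuses the convergence criteria already established and hides the measure theory inside them. What your route buys is self-containedness: you never invoke continuity of multiplication on $(LS(\mathcal{M}),t(\mathcal{M}))$, nor Proposition \ref{p2_1} or Corollary \ref{cp2_1}, and along the way you isolate a reusable quantitative fact, namely that $\mathcal{D}(E_\lambda^\bot(|x|))\to 0$ locally in measure as $\lambda\to\infty$ for every $x\in LS(\mathcal{M})$. The essential shared ingredients are the central sequence from the definition of local measurability and property (D7); the difference lies in whether one argues abstractly in two stages or verifies membership in the basic neighbourhoods directly in one stage.
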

\begin{proof}
If $x\in LS(\mathcal{M})$, then there exists a sequence
$\{z_n\}_{n=1}^\infty\subset\mathcal{P}(\mathcal{Z}(\mathcal{M}))$ such that
 $z_n\uparrow \mathbf{1}$ and $xz_n\in S(\mathcal{M})$ for all
$n\in\mathbb{N}$. By Corollary  \ref{cp2_1},
$z_n\stackrel{t(\mathcal{M})}{\longrightarrow} \mathbf{1}$, and therefore
 $xz_n\stackrel{t(\mathcal{M})}{\longrightarrow} x$. Consequently, the algebra
$S(\mathcal{M})$ is every where dense in $(LS(\mathcal{M}),t(\mathcal{M}))$.

Now let $x\in S(\mathcal{M})$. Then there exists a sequence
$\{p_n\}_{n=1}^\infty\subset\mathcal{P}(\mathcal{M})$ such that
$p_n\uparrow \mathbf{1}$, $p_n^\bot\in
\mathcal{P}_{fin}(\mathcal{M})$ and $xp_n\in\mathcal{M}$  for any
$n\in\mathbb{N}$. According to (D7) we have that
$\mathcal{D}(p_n^\bot)\stackrel{t(L^\infty(\Omega))}{\longrightarrow}0$,
therefore, Proposition \ref{p2_1}(i) implies the convergence
$p_n\stackrel{t(\mathcal{M})}{\longrightarrow} \mathbf{1}$ (we set
$z_n=\mathbf{1}$). Then
$xp_n\stackrel{t(\mathcal{M})}{\longrightarrow}x$. It means that
the algebra $\mathcal{M}$ is every where dense in the algebra
$S(\mathcal{M})$ with respect to the topology $t(\mathcal{M})$.
Thus, the von Neumann algebra $\mathcal{M}$ is every where dense
in $(LS(\mathcal{M}),t(\mathcal{M}))$.
\end{proof}

The lattice $\mathcal{P}(\mathcal{M})$ is said to have a countable type if every family of non-zero pairwise orthogonal projections in
$\mathcal{P}(\mathcal{M})$ is, at most, countable.  A von Neumann algebra is said to be $\sigma$-finite if the lattice $\mathcal{P(M)}$ has a countable type.
 It is shown in \cite[Lemma 1.1]{Seg}
that a finite von Neumann algebra  $\mathcal{M}$ is $\sigma$-finite, provided that the
lattice $\mathcal{P}(\mathcal{Z}(\mathcal{M}))$ of central projections has a countable type.

If $\mathcal{M}$ is a commutative von Neumann algebra and
$\mathcal{P}(\mathcal{M})$ has a countable type, then
$\mathcal{M}$ is $*$-isomorphic to the $*$-algebra
$L^\infty(\Omega,\Sigma,\mu)$ with $\mu(\Omega)<\infty$. In this
case, the topology $t(L^\infty(\Omega))$ is metrizable and has a
base of neighbourhoods of zero consisting of the sets
$W(\Omega,1/n,1/n),\ n\in\mathbb{N}$. In addition, $f_n
\stackrel{t(L^\infty(\Omega))}{\longrightarrow}0\Leftrightarrow
f_n\rightarrow 0$ in measure $\mu$, where $f_n,f\in
L^0(\Omega,\Sigma,\mu)=LS(\mathcal{M})$.

We need an another basis of neighbourhoods of zero in topology
$t(\mathcal{M})$ in the case when the algebra
$\mathcal{Z}(\mathcal{M})$ is $\sigma$-finite. If  $\varphi$ is
$*$-isomorphism from $\mathcal{Z}(\mathcal{M})$ onto
$L^\infty(\Omega,\Sigma,\mu),\ \mu(\Omega)<\infty$, then
$\tau(x)=\int_\Omega \varphi(x) d\mu$ is a faithful normal finite
trace on $\mathcal{Z}(\mathcal{M})$. For arbitrary positive
scalars  $\varepsilon,\beta,\gamma$ set
\begin{equation}
\label{v2}
\begin{split}
  V(\varepsilon, \beta,\gamma) := \{x\in LS(\mathcal{M})\colon \
    \mbox{there exist}  \ p\in \mathcal{P}(\mathcal{M}),\ \\
  z\in \mathcal{P}(\mathcal{Z}(\mathcal{M})),\
   \mbox{such that} \ xp\in \mathcal{M},
  \|xp\|_{\mathcal{M}}\leq\varepsilon,
  \\ \tau(z^\bot)\leq \beta, \
    \mathcal{D}(zp^\bot)\leq\gamma \varphi(z)\}.
\end{split}
\end{equation}

\begin{proposition}
\label{p3} If the centre $\mathcal{Z}(\mathcal{M})$ of von Neumann
algebra $\mathcal{M}$ is  $\sigma$-finite algebra, then the system of
sets given by (\ref{v2}) forms a basis of neighbourhoods of zero
in the topology  $t(\mathcal{M})$.
\end{proposition}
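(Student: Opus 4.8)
The plan is to show that the system (\ref{v2}) and the known base (\ref{v1}) for $t(\mathcal{M})$ are mutually refining: every set $V(B,\varepsilon,\delta)$ of the form (\ref{v1}) contains some set $V(\varepsilon',\beta',\gamma')$ of the form (\ref{v2}), and conversely every $V(\varepsilon,\beta,\gamma)$ contains some $V(\Omega,\varepsilon',\delta')$. Since (\ref{v1}) is already a neighbourhood base of zero for $t(\mathcal{M})$, this will prove that (\ref{v2}) is a base as well. The hypothesis that $\mathcal{Z}(\mathcal{M})$ is $\sigma$-finite enters through the fact that the representing measure may be taken with $\mu(\Omega)<\infty$, so that $B=\Omega$ is an admissible choice in (\ref{v1}) and $\tau$ is a finite trace on $\mathcal{Z}(\mathcal{M})$.

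The key computation concerns when the indicator $\varphi(z^\bot)$ of a central projection lies in $W(B,\varepsilon,\delta)$. Since $z^\bot\in\mathcal{P}(\mathcal{Z}(\mathcal{M}))$, its image $\varphi(z^\bot)=\chi_F$ is the characteristic function of a measurable set $F\subseteq\Omega$, and $\tau(z^\bot)=\int_\Omega\chi_F\,d\mu=\mu(F)$. I will verify the following dichotomy: if $\varepsilon\geq 1$ then $\chi_F\in W(B,\varepsilon,\delta)$ automatically (take $E=B$), whereas if $\varepsilon<1$ then the requirement $\|\chi_F\chi_E\|_{\infty}\leq\varepsilon$ forces $\chi_F\chi_E=0$ a.e., so the optimal choice $E=B\setminus F$ shows that $\chi_F\in W(B,\varepsilon,\delta)$ holds if and only if $\mu(B\cap F)\leq\delta$. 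Taking $B=\Omega$ this reads $\varphi(z^\bot)\in W(\Omega,\varepsilon,\delta)\Leftrightarrow\tau(z^\bot)\leq\delta$ whenever $\varepsilon<1$. This equivalence is the heart of the argument and is exactly what links the $W$-condition in (\ref{v1}) to the trace condition in (\ref{v2}).

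With this in hand the two refinements are routine parameter bookkeeping, exploiting that a single $\varepsilon$ governs all three constraints in (\ref{v1}) while (\ref{v2}) decouples them. For $V(\Omega,\varepsilon',\delta')\subseteq V(\varepsilon,\beta,\gamma)$ I would set $\varepsilon'=\min\{\varepsilon,\gamma,\tfrac12\}$ and $\delta'=\beta$: then for any $x$ in the left-hand set the witnessing projections $p,z$ satisfy $\|xp\|_{\mathcal{M}}\leq\varepsilon'\leq\varepsilon$ and $\mathcal{D}(zp^\bot)\leq\varepsilon'\varphi(z)\leq\gamma\varphi(z)$, while $\varepsilon'<1$ and the key equivalence give $\tau(z^\bot)\leq\delta'=\beta$, so $x\in V(\varepsilon,\beta,\gamma)$. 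Conversely, for $V(\varepsilon',\beta',\gamma')\subseteq V(B,\varepsilon,\delta)$ I would take $\varepsilon'=\varepsilon$, $\gamma'=\varepsilon$ and $\beta'=\delta$: the norm and dimension constraints transfer immediately, and from $\tau(z^\bot)\leq\beta'=\delta$ one obtains $\mu(B\cap F)\leq\mu(F)=\tau(z^\bot)\leq\delta$, hence $\varphi(z^\bot)\in W(B,\varepsilon,\delta)$ by the dichotomy (trivially so if $\varepsilon\geq 1$).

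I expect the only genuinely delicate point to be the characteristic-function computation of the second paragraph, and in particular the case analysis on whether $\varepsilon<1$; the strict inequality is essential, since for $\varepsilon\geq 1$ the $W$-condition carries no information about $F$ and the link to the trace is lost. Everything else reduces to monotonicity of the three defining inequalities in their parameters, together with the finiteness $\mu(\Omega)<\infty$ supplied by $\sigma$-finiteness of $\mathcal{Z}(\mathcal{M})$.
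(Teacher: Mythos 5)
Your proposal is correct and follows essentially the same route as the paper: both arguments establish mutual refinement between the systems (\ref{v1}) and (\ref{v2}), and both hinge on the same key observation that for $\varepsilon<1$ the condition $\varphi(z^\bot)\in W(\Omega,\varepsilon,\delta)$ forces $\varphi(z^\bot)\chi_E=0$ and hence is equivalent to $\tau(z^\bot)\leq\delta$, while for $\varepsilon\geq 1$ the $W$-condition is vacuous. The only (harmless) difference is that you treat a general set $B$ in (\ref{v1}) explicitly, whereas the paper works directly with $B=\Omega$, implicitly using that the sets $V(\Omega,\varepsilon,\delta)$ already form a base when $\mu(\Omega)<\infty$.
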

\begin{proof}
Let $V(\Omega,\varepsilon,\delta)$ be a neighbourhood of zero of
the form (\ref{v1}). If $x\in V(\varepsilon,\delta,\varepsilon)$,
then there exist
 $p\in\mathcal{P}(\mathcal{M}),\ z\in\mathcal{P}(\mathcal{Z}(\mathcal{M}))$,
such that $xp\in\mathcal{M},\ \|xp\|_{\mathcal{M}}\leq
\varepsilon,\ \int_\Omega \varphi(z^\bot) d \mu\leq \delta$ and
$\mathcal{D}(zp^\bot)\leq\varepsilon\varphi(z)$. The inequality
$\int_\Omega \varphi(z^\bot) d \mu\leq \delta$ means that
$\varphi(z^\bot)\in W(\Omega,\varepsilon,\delta)$. Hence $x\in
V(\Omega,\varepsilon,\delta)$, that implies the inclusion
$V(\varepsilon,\delta,\varepsilon)\subset
V(\Omega,\varepsilon,\delta)$.

If $x\in V(\Omega,\varepsilon,\delta)$, then there exist
$p\in\mathcal{P}(\mathcal{M}),\ z\in
\mathcal{P}(\mathcal{Z}(\mathcal{M}))$, such that
$\|xp\|_{\mathcal{M}}\leq\varepsilon,\ \varphi(z^\bot)\in
W(\Omega,\varepsilon,\delta)$ and
$\mathcal{D}(zp^\bot)\leq\varepsilon \varphi(z)$. Inclusion
$\varphi(z^\bot)\in W(\Omega,\varepsilon,\delta)$ means that there
exists $E\in\Sigma$, such that $\mu(\Omega\setminus E)\leq\delta$
and $0\leq\varphi(z^\bot)\chi_E\leq\varepsilon$. If
$0<\varepsilon<1$, then $\varphi(z^\bot)\chi_E=0$, i.e.
$\varphi(z^\bot)\leq\chi_{\Omega\setminus E}$, and therefore
$\tau(z^\bot)\leq\delta$, that implies $x\in
V(\varepsilon,\delta,\varepsilon)$.
%For $0<\varepsilon<1$ the inclusion
%$V(\Omega,\varepsilon,\delta)\subset
%V(\varepsilon,\delta,\varepsilon)$ is proven in the same way.
\end{proof}

\section{The self-adjoint derivations on algebra $LS(\mathcal{M})$}

Let $\mathcal{M}$ be an arbitrary von Neumann algebra, let $\mathcal{A}$ be a subalgebra in $LS(\mathcal{M})$. A linear mapping
 $\delta: \mathcal{A} \rightarrow LS(\mathcal{M})$
is called a \emph{derivation}  on $\mathcal{A}$ with values in $LS(\mathcal{M})$, if
$\delta(xy)=\delta(x)y+x\delta(y)$ for all $x,y\in
\mathcal{A}$. Each element $a\in \mathcal{A}$ defines a
derivation  $\delta_a(x):=[a,x]=ax-xa$ on $\mathcal{A}$ with values in $\mathcal{A}$.
Derivations $\delta_a,a\in\mathcal{A}$, are said to be
\emph{inner derivations} on $\mathcal{A}$.

Now, we list a few properties of derivations on $\mathcal{A}$ which we shall need below.

\begin{lemma}
\label{ll1} If $\mathcal{P}(\mathcal{Z}(\mathcal{M}))\subset \mathcal{A}$, $\delta$ is a derivation on $\mathcal{A}$
and  $z\in\mathcal{P}(\mathcal{Z}(\mathcal{M}))$, then $\delta(z)=0$ and
$\delta(zx)=z\delta(x)$ for all $x\in \mathcal{A}$.
\end{lemma}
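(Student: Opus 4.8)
The plan is to prove the two claims $\delta(z)=0$ and $\delta(zx)=z\delta(x)$ in that order, deriving the second from the first together with the central idempotency of $z$. The key algebraic facts I will exploit are that $z$ is a projection (so $z^2=z$) and that $z$ is central (so $z$ commutes with every $x\in\mathcal{A}$, and in particular $zx=xz$ inside $LS(\mathcal{M})$). The derivation identity $\delta(xy)=\delta(x)y+x\delta(y)$ is the only structural property of $\delta$ I will need.

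First I would establish $\delta(z)=0$. Applying $\delta$ to the idempotent relation $z=z\cdot z$ and using the Leibniz rule gives $\delta(z)=\delta(z)z+z\delta(z)$. The natural next move is to multiply this identity on the left by $z$ and use $z^2=z$ to get $z\delta(z)=z\delta(z)z+z\delta(z)$, whence $z\delta(z)z=0$; symmetrically, multiplying on the right yields $z\delta(z)z=0$ again, and combining the multiplied forms shows $\delta(z)=\delta(z)z+z\delta(z)=2z\delta(z)z+ (\text{cross terms})$. A cleaner route, which I expect to be the intended one, is to observe that from $\delta(z)=\delta(z)z+z\delta(z)$ one multiplies both sides on the left by $z$ to obtain $z\delta(z)=2z\delta(z)z$ (using $z^2=z$), and on the right by $z$ to obtain $\delta(z)z=2z\delta(z)z$; substituting these back into $\delta(z)=\delta(z)z+z\delta(z)=4z\delta(z)z$ while also noting $\delta(z)=\delta(z)z+z\delta(z)$ forces $z\delta(z)z=0$ and hence $\delta(z)=0$. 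I will arrange the elementary bookkeeping so that the two one-sided multiplications combine to kill $\delta(z)$ outright.

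Having shown $\delta(z)=0$, the second assertion is immediate: for any $x\in\mathcal{A}$ the Leibniz rule gives
\[
\delta(zx)=\delta(z)x+z\delta(x)=0\cdot x+z\delta(x)=z\delta(x),
\]
which is exactly the claim. Here I use only $\delta(z)=0$ and the product $zx\in\mathcal{A}$, which is legitimate since $\mathcal{A}$ is a subalgebra containing $\mathcal{P}(\mathcal{Z}(\mathcal{M}))$ by hypothesis, so both $z$ and $zx$ lie in the domain of $\delta$.

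The only genuine obstacle is the first step, and it is mild: one must be careful that all the manipulations with $\delta(z)$ take place in the noncommutative ambient algebra $LS(\mathcal{M})$, so I cannot freely reorder $\delta(z)$ and $z$ even though $z$ is central as an element of $LS(\mathcal{M})$ — centrality of $z$ tells me $z$ commutes with elements of $\mathcal{A}$, but $\delta(z)$ is a priori an arbitrary element of $LS(\mathcal{M})$ and need not commute with $z$. Thus the argument must rely exclusively on left/right multiplication by the central projection $z$ and the relation $z^2=z$, never on commuting $z$ past $\delta(z)$. Once this discipline is observed, the computation closing out $\delta(z)=0$ is a short finite manipulation.
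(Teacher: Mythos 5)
Your second step (deducing $\delta(zx)=z\delta(x)$ once $\delta(z)=0$ is known) is fine and matches the paper, but the first step contains a genuine gap, and it stems precisely from your closing remark. You declare that you will never commute $z$ past $\delta(z)$, on the grounds that centrality of $z$ only guarantees commutation with elements of $\mathcal{A}$. This is a misreading of the setup: $z\in\mathcal{P}(\mathcal{Z}(\mathcal{M}))$ lies in $S(\mathcal{Z}(\mathcal{M}))$, which (as the paper notes in Section 2) is exactly the centre of the whole $*$-algebra $LS(\mathcal{M})$; since $\delta$ takes its values in $LS(\mathcal{M})$, the element $\delta(z)$ \emph{does} commute with $z$. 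Moreover, this fact is not optional. Using only $z^2=z$, the identity $\delta(z)=\delta(z)z+z\delta(z)$, and one-sided multiplications by $z$, one can conclude only that $z\delta(z)z=0$ and $(\mathbf{1}-z)\delta(z)(\mathbf{1}-z)=0$; the two off-diagonal corners $z\delta(z)(\mathbf{1}-z)$ and $(\mathbf{1}-z)\delta(z)z$ are untouched. Indeed, for a non-central idempotent the conclusion is simply false: in $M_2(\mathbb{C})$ take $e=e_{11}$ and $\delta=[e_{12},\,\cdot\,]$; then $\delta(e)=-e_{12}\neq 0$ even though $e\delta(e)e=0$ and $(\mathbf{1}-e)\delta(e)(\mathbf{1}-e)=0$. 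So no amount of ``finite manipulation'' under your self-imposed discipline can close the argument.

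Your intermediate algebra is also incorrect under that discipline: left-multiplying $\delta(z)=\delta(z)z+z\delta(z)$ by $z$ gives $z\delta(z)=z\delta(z)z+z\delta(z)$, i.e. $z\delta(z)z=0$, not $z\delta(z)=2z\delta(z)z$ (the latter already presupposes $z\delta(z)=z\delta(z)z$, which is exactly the kind of commutation you forbade yourself). The paper's proof is the one you were trying to avoid: since $z$ is central in $LS(\mathcal{M})$, one has $\delta(z)=\delta(z)z+z\delta(z)=2z\delta(z)$; multiplying by $z$ gives $z\delta(z)=2z\delta(z)$, hence $z\delta(z)=0$, and therefore $\delta(z)=2z\delta(z)=0$.
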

\begin{proof}
We have that
$\delta(z)=\delta(z^2)=\delta(z)z+z\delta(z)=2z\delta(z)$.
Hence, $z\delta(z)=z(2z\delta(z))=2z\delta(z)$, that is
$z\delta(z)=0$. Therefore, we have $\delta(z)=0$. In particular,
$\delta(zx)=\delta(z)x+z\delta(x)=z\delta(x)$.
\end{proof}

Lemma \ref{ll1} immediately implies

\begin{corollary}
\label{cl1}
If $z\in\mathcal{P}(\mathcal{Z}(\mathcal{M}))\subset \mathcal{A}$, $\delta$ is a derivation on $\mathcal{A}$,
then $\delta(z\mathcal{A})\subset z\mathcal{A}$ and the restriction $\delta^{(z)}$ of the derivation $\delta$ to $z\mathcal{A}$
is a derivation on  $z\mathcal{A}$, in addition, if $\delta$ is $t(\mathcal{M})$-continuous, then
 $\delta^{(z)}$ is $t(z\mathcal{M})$-continuous.
\end{corollary}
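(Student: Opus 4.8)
The plan is to deduce everything from Lemma \ref{ll1}, which already supplies the key identity $\delta(zx)=z\delta(x)$ for a central projection $z$. We may assume $z\neq 0$, since otherwise $z\mathcal{A}=\{0\}$ and all assertions are trivial. First I would record that, because $z\in\mathcal{P}(\mathcal{Z}(\mathcal{M}))\subset\mathcal{A}$ and $\mathcal{A}$ is a subalgebra, the set $z\mathcal{A}=\{za:a\in\mathcal{A}\}$ is itself a subalgebra of $\mathcal{A}$: using that $z$ is central and idempotent, $(za)(zb)=z^{2}ab=z(ab)\in z\mathcal{A}$, and $z$ acts as its identity since $z\cdot za=za$. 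This reduces the corollary to three points: that $\delta$ maps $z\mathcal{A}$ into its central component $z\mathcal{A}\subset zLS(\mathcal{M})$ (identified with $LS(z\mathcal{M})$ via the decomposition of Proposition \ref{p1}), that the restriction obeys the Leibniz rule, and that it is continuous for the reduced topology.

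The inclusion and the Leibniz rule follow by direct application of Lemma \ref{ll1}. For any $a\in\mathcal{A}$ we have $\delta(za)=z\delta(a)=z\cdot\big(z\delta(a)\big)\in zLS(\mathcal{M})$, so the values of the restriction $\delta^{(z)}$ are localized to $z$, giving $\delta(z\mathcal{A})\subset z\mathcal{A}$. The derivation identity is then a short computation: for $za,zb\in z\mathcal{A}$,
\[
\delta^{(z)}\big((za)(zb)\big)=\delta(z\,ab)=z\delta(ab)=z\delta(a)\,b+za\,\delta(b),
\]
whereas $\delta^{(z)}(za)\cdot zb+za\cdot\delta^{(z)}(zb)=z\delta(a)\cdot zb+za\cdot z\delta(b)=z\delta(a)b+za\delta(b)$, where centrality of $z$ together with $z^{2}=z$ is used to collapse the repeated factors; the two expressions coincide, so $\delta^{(z)}$ is a derivation on $z\mathcal{A}$.

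The only genuinely non-routine point, and the main obstacle, is transferring continuity from $t(\mathcal{M})$ to $t(z\mathcal{M})$; here I would rely on Proposition \ref{p2}. Continuity of a linear map between these topological vector spaces may be tested on nets converging to zero, so take a net $w_\alpha\in z\mathcal{A}$ with $w_\alpha\stackrel{t(z\mathcal{M})}{\longrightarrow}0$. Every element of $z\mathcal{A}$ satisfies $w_\alpha=zw_\alpha$, so Proposition \ref{p2} (with $x_\alpha=w_\alpha$) gives the equivalence $w_\alpha\stackrel{t(z\mathcal{M})}{\longrightarrow}0\Longleftrightarrow w_\alpha\stackrel{t(\mathcal{M})}{\longrightarrow}0$, whence $w_\alpha\stackrel{t(\mathcal{M})}{\longrightarrow}0$. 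Since $\delta$ is $t(\mathcal{M})$-continuous, $\delta(w_\alpha)\stackrel{t(\mathcal{M})}{\longrightarrow}0$. By Lemma \ref{ll1} the image obeys $\delta(w_\alpha)=\delta(zw_\alpha)=z\delta(w_\alpha)$, so it is again of the form $z(\cdot)$, and a second application of Proposition \ref{p2} (now with $x_\alpha=\delta(w_\alpha)$) converts this back to $\delta^{(z)}(w_\alpha)\stackrel{t(z\mathcal{M})}{\longrightarrow}0$. This yields the $t(z\mathcal{M})$-continuity of $\delta^{(z)}$ and completes the proof. The subtlety to watch is precisely the correct identification of $t(z\mathcal{M})$ with the local measure topology on $zLS(\mathcal{M})=LS(z\mathcal{M})$, as this is what makes Proposition \ref{p2} available in both directions.
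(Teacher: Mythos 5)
Your proof is correct and follows essentially the same route as the paper: Lemma \ref{ll1} gives the inclusion $\delta(z\mathcal{A})\subset z\mathcal{A}$ and the Leibniz rule for $\delta^{(z)}$, and Proposition \ref{p2} is applied in both directions (using $w_\alpha = zw_\alpha$ and $\delta(w_\alpha)=z\delta(w_\alpha)$) to transfer continuity between $t(\mathcal{M})$ and $t(z\mathcal{M})$. The only cosmetic remark is that the inclusion $\delta(z\mathcal{A})\subset z\mathcal{A}$ (rather than merely $\subset zLS(\mathcal{M})$) uses that $\delta$ takes values in $\mathcal{A}$, which is the standing meaning of ``derivation on $\mathcal{A}$'' here, so no gap results.
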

\begin{proof}
By Lemma \ref{ll1}, the inclusion $\delta(z\mathcal{A})\subset z\mathcal{A}$ holds.
Moreover, the linear mapping $\delta^{(z)}: z\mathcal{A}\longrightarrow z\mathcal{A}$
has the following property
$$
\delta^{(z)}((zx)(zy))=\delta(zx)zy+zx\delta(zy)=\delta^{(z)}(zx)zy+zx\delta^{(z)}(zy)
$$
for all $x,y\in\mathcal{A}$.

If $x_\alpha,x\in z\mathcal{A}$,
$x_\alpha\stackrel{t(z\mathcal{M})}{\longrightarrow} x$, then
$x_\alpha\stackrel{t(\mathcal{M})}{\longrightarrow} x$
(Proposition \ref{p2}), and therefore
$\delta^{(z)}(x_\alpha)=z\delta(x_\alpha)\stackrel{t(\mathcal{M})}{\longrightarrow}
z\delta(x)=\delta^{(z)}(x)$, that implies the convergence
$\delta^{(z)}(x_\alpha)\stackrel{t(z\mathcal{M})}{\longrightarrow}
\delta^{(z)}(x)$ (Proposition \ref{p2}).
\end{proof}

Let $\mathcal{A}$ be a subalgebra in $LS(\mathcal{M}),\ 0\neq e\in\mathcal{P}(\mathcal{M})\cap\mathcal{A}$, let
$\delta$ be a derivation on $\mathcal{A}$ and let $\delta^{(e)}$ be a linear mapping from  $e\mathcal{A}e$ into $e\mathcal{A}e$ defined by the equality
 $\delta^{(e)}(x)=e\delta(x)e,\ x\in e\mathcal{A}e\subset\mathcal{A}$.
If $e=z\in\mathcal{P}(\mathcal{Z}(\mathcal{M}))$, then $\delta^{(e)}$ coincides with the restriction  $\delta^{(z)}$ of the derivation
 $\delta$ to $z\mathcal{A}=z\mathcal{A}z$.

\begin{lemma}
\label{ll2}
$\delta^{(e)}$ is a derivation on $e\mathcal{A}e$, in addition, $\delta^{(e)}(e)=0$.
\end{lemma}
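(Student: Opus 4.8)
The plan is to establish both assertions by direct algebraic manipulation, the key tool being that the projection $e$ acts as a two-sided identity on the compressed algebra $e\mathcal{A}e$. First I would record the elementary but essential observation that for every $x\in e\mathcal{A}e$ one has $ex=xe=x$: writing $x=eae$ with $a\in\mathcal{A}$ and using $e=e^2$ gives $ex=e(eae)=eae=x$ and likewise $xe=x$. Linearity of $\delta^{(e)}$ is immediate, since it is the composition of the linear map $\delta$ with left and right multiplication by $e$, so it only remains to verify the Leibniz rule on $e\mathcal{A}e$ and the vanishing at $e$.

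For the identity $\delta^{(e)}(e)=0$, I would apply $\delta$ to $e=e^2$ to obtain $\delta(e)=\delta(e)e+e\delta(e)$, and then compress by multiplying on the left and on the right by $e$, using $e^2=e$. This yields $e\delta(e)e=e\delta(e)e+e\delta(e)e=2\,e\delta(e)e$, whence $\delta^{(e)}(e)=e\delta(e)e=0$.

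For the derivation property, I would take $x,y\in e\mathcal{A}e$, note that $xy\in e\mathcal{A}e$ as well, and compute $\delta^{(e)}(xy)=e\delta(xy)e=e\bigl(\delta(x)y+x\delta(y)\bigr)e=e\delta(x)ye+ex\delta(y)e$. Using $ye=y$ in the first summand and $ex=x$ in the second to absorb one copy of $e$, and then $y=ey$, $x=xe$ to reintroduce a copy of $e$ adjacent to $\delta(x)$ and $\delta(y)$ respectively, the two terms become $(e\delta(x)e)y$ and $x(e\delta(y)e)$, that is $\delta^{(e)}(x)y+x\delta^{(e)}(y)$, as required.

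The computation presents no genuine obstacle; the only point demanding care is the bookkeeping of where the compression projection $e$ is absorbed or reinserted, which is controlled entirely by the relations $ex=xe=x$ on $e\mathcal{A}e$ together with $e^2=e$. (That $\delta^{(e)}$ in fact takes values in $e\mathcal{A}e$ is part of the ambient setting: in the main application $\mathcal{A}=LS(\mathcal{M})$, so $e\delta(x)e\in e\,LS(\mathcal{M})\,e=e\mathcal{A}e$; in any case the Leibniz identity holds verbatim regardless of the precise codomain.)
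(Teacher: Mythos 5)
Your proof is correct and follows essentially the same route as the paper: the Leibniz rule is verified by expanding $e\delta(xy)e$ and using $exe=x$, $eye=y$ to absorb and reinsert copies of $e$, and the identity $\delta^{(e)}(e)=0$ follows from compressing $\delta(e)=\delta(e^2)=\delta(e)e+e\delta(e)$ by $e$ on both sides to get $\delta^{(e)}(e)=2\delta^{(e)}(e)$, exactly as in the paper.
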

\begin{proof}
If $x,y\in e\mathcal{A}e$, then $x,y\in \mathcal{A}$ and
\begin{equation*}
 \begin{split}
\delta^{(e)}(xy)&=
e(\delta(x)y)e+e(x\delta(y))e\cr &
=(e\delta(x)e)(eye)+(exe)(e\delta(y)e)=
\delta^{(e)}(x)y+x\delta^{(e)}(y).
 \end{split}
\end{equation*}
Further, from the equalities
$$
\delta^{(e)}(e)=e\delta(e^2)e=e\delta(e)ee+ee\delta(e)e=2e\delta(e)e=2\delta^{(e)}(e)
$$
it follows that $\delta^{(e)}(e)=0$.
\end{proof}

Let $\mathcal{A}$ be a $*$-subalgebra in $LS(\mathcal{M})$, let $\delta$ be a derivation on $\mathcal{A}$ with values in $LS(\mathcal{M})$. Let us define a mapping
$$\delta^*: \mathcal{A}\rightarrow
LS(\mathcal{M}),$$
by setting $\delta^*(x)=(\delta(x^*))^*$, $x\in
\mathcal{A}$. A direct verification shows that $\delta^*$ is also a
derivation on $\mathcal{A}$.

A derivation $\delta$ on $\mathcal{A}$ is said to be
\emph{self-adjoint}, if $\delta=\delta^*$. Every
derivation $\delta$ on $\mathcal{A}$ can be represented in the form
$\delta= Re(\delta)+ i Im(\delta)$, where
$Re(\delta)=(\delta+\delta^*)/2,\ Im(\delta)=(\delta-\delta^*)/2i$
are self-adjoint derivations on $\mathcal{A}$.

Since $(LS(\mathcal{M}),t(\mathcal{M}))$ is a topological
 $*$-algebra, the following result holds.

\begin{lemma}
\label{ll3} If $\mathcal{A}$ is a $*$-subalgebra in $LS(\mathcal{M})$, then a derivation $\delta: \mathcal{A}\rightarrow
LS(\mathcal{M})$ is continuous with respect to the topology  $t(\mathcal{M})$ if and only if the self-adjoint derivations $Re(\delta)$
and $Im(\delta)$ are continuous with respect to this topology.
\end{lemma}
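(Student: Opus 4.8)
The plan is to exploit the fact, recalled just above the statement, that $(LS(\mathcal{M}),t(\mathcal{M}))$ is a topological $*$-algebra. In particular, addition, scalar multiplication, and the involution $J\colon x\mapsto x^*$ are all $t(\mathcal{M})$-continuous, and it is precisely the continuity of the involution that will do the work. Since $t(\mathcal{M})$ need not be metrizable, I would phrase continuity throughout in terms of nets rather than sequences, but this is a purely notational precaution and introduces no genuine difficulty.

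For the easy implication, suppose $Re(\delta)$ and $Im(\delta)$ are $t(\mathcal{M})$-continuous. Then $\delta=Re(\delta)+i\,Im(\delta)$ is a linear combination of $t(\mathcal{M})$-continuous maps, and since addition and scalar multiplication are $t(\mathcal{M})$-continuous, $\delta$ is continuous as well. Concretely, if $x_\alpha\stackrel{t(\mathcal{M})}{\longrightarrow}x$, then $Re(\delta)(x_\alpha)\stackrel{t(\mathcal{M})}{\longrightarrow}Re(\delta)(x)$ and $Im(\delta)(x_\alpha)\stackrel{t(\mathcal{M})}{\longrightarrow}Im(\delta)(x)$, whence $\delta(x_\alpha)\stackrel{t(\mathcal{M})}{\longrightarrow}\delta(x)$.

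For the converse, the key observation is that $\delta^*=J\circ\delta\circ J$, since by definition $\delta^*(x)=(\delta(x^*))^*$. Assuming $\delta$ is $t(\mathcal{M})$-continuous and using that $J$ is $t(\mathcal{M})$-continuous, the derivation $\delta^*$ is then continuous as a composition of three continuous maps. Consequently $Re(\delta)=(\delta+\delta^*)/2$ and $Im(\delta)=(\delta-\delta^*)/2i$ are once more linear combinations of $t(\mathcal{M})$-continuous maps, hence $t(\mathcal{M})$-continuous, which is what we wanted.

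There is no serious obstacle here: the entire argument rests only on $t(\mathcal{M})$ being a vector, indeed a $*$-algebra, topology, and not on any finer analytic property. The single point requiring attention is recognizing the factorization $\delta^*=J\circ\delta\circ J$ together with the availability of $t(\mathcal{M})$-continuity of the involution $J$, which is exactly what promotes continuity of $\delta$ to continuity of $\delta^*$.
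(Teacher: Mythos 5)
Your proof is correct and is exactly the argument the paper intends: the paper states the lemma without a written proof, remarking only that it follows since $(LS(\mathcal{M}),t(\mathcal{M}))$ is a topological $*$-algebra, and your factorization $\delta^*=J\circ\delta\circ J$ together with continuity of the involution and of the vector space operations is precisely the intended justification.
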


The following lemma establishes a connection between the property of innerness of  derivation $\delta$ and the property of innerness of derivations
$Re(\delta)$ and $Im(\delta)$.

\begin{lemma}
\label{ll4} Let $\mathcal{A}$ be a $*$-subalgebra in $LS(\mathcal{M})$. A derivation $\delta: \mathcal{A}\rightarrow
\mathcal{A}$ is inner derivation on $\mathcal{A}$ if and only if $Re(\delta)$ and $Im(\delta)$
are inner derivations.
\end{lemma}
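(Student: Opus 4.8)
The plan is to reduce the entire statement to the single algebraic identity $\delta_a^{*}=\delta_{-a^{*}}$ together with the obvious $\mathbb{C}$-linearity of the assignment $a\mapsto\delta_a$, namely $\delta_{a+b}=\delta_a+\delta_b$ and $\delta_{\lambda a}=\lambda\delta_a$ for $\lambda\in\mathbb{C}$, which is immediate from bilinearity of the commutator. First I would establish the key identity. For $a,x\in\mathcal{A}$, using $(uv)^{*}=v^{*}u^{*}$,
$$
\delta_a^{*}(x)=\bigl(\delta_a(x^{*})\bigr)^{*}=\bigl(ax^{*}-x^{*}a\bigr)^{*}=xa^{*}-a^{*}x=-[a^{*},x].
$$
Since $\mathcal{A}$ is a $*$-subalgebra we have $a^{*}\in\mathcal{A}$, so this says precisely $\delta_a^{*}=\delta_{-a^{*}}$, which is again an inner derivation implemented by an element of $\mathcal{A}$.

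For the forward implication, I would assume $\delta=\delta_a$ with $a\in\mathcal{A}$ and compute $Re(\delta)$ and $Im(\delta)$ directly from the definitions $Re(\delta)=(\delta+\delta^{*})/2$ and $Im(\delta)=(\delta-\delta^{*})/2i$. Substituting $\delta^{*}=\delta_{-a^{*}}$ and using linearity gives
$$
Re(\delta)=\delta_{(a-a^{*})/2},\qquad Im(\delta)=\delta_{(a+a^{*})/(2i)}.
$$
Because $\mathcal{A}$ is closed under involution and under multiplication by complex scalars, both $(a-a^{*})/2$ and $(a+a^{*})/(2i)$ lie in $\mathcal{A}$, so $Re(\delta)$ and $Im(\delta)$ are inner derivations on $\mathcal{A}$.

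For the converse, I would suppose $Re(\delta)=\delta_b$ and $Im(\delta)=\delta_c$ for some $b,c\in\mathcal{A}$. Since every derivation satisfies $\delta=Re(\delta)+i\,Im(\delta)$, linearity of $a\mapsto\delta_a$ immediately yields $\delta=\delta_b+i\delta_c=\delta_{b+ic}$, and $b+ic\in\mathcal{A}$, so $\delta$ is inner.

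I do not anticipate any genuine obstacle, since the argument is purely algebraic. The only points that require care are the correct bookkeeping of the adjoint and the sign in the identity $\delta_a^{*}=\delta_{-a^{*}}$, and the observation that it is the hypothesis that $\mathcal{A}$ is a $*$-subalgebra (rather than merely a subalgebra) that guarantees the implementing elements $(a-a^{*})/2$, $(a+a^{*})/(2i)$ and $b+ic$ remain inside $\mathcal{A}$; without closure under involution the decomposition into self-adjoint parts could leave $\mathcal{A}$.
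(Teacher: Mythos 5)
Your proof is correct and follows essentially the same route as the paper: the paper's own argument expands $Re(\delta)(x)=\tfrac{1}{2}\bigl([a,x]+[a,x^*]^*\bigr)$ and $Im(\delta)(x)=\tfrac{1}{2i}\bigl([a,x]-[a,x^*]^*\bigr)$ inline and arrives at the very same implementing elements $(a-a^*)/2$ and $(a+a^*)/(2i)$, and its converse is word for word your computation $\delta=\delta_b+i\delta_c=\delta_{b+ic}$. Isolating the identity $\delta_a^*=\delta_{-a^*}$ first is a tidy repackaging of the same algebra, not a different method.
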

\begin{proof}
If $\delta=\delta_a,\ a\in\mathcal{A},\ b=Re(a)=(a+a^*)/2,\
c=Im(a)=(a-a^*)/2i,\ x\in\mathcal{A}$, then

\begin{gather*}
\begin{split}
Re(\delta)(x)&=\frac{[a,x]+[a,x^*]^*}{2}=\frac{ax-xa+(ax^*-x^*a)^*}{2}\\
&=icx-ixc=\delta_{ic}(x);
\end{split}
\end{gather*}
and
\begin{gather*}
\begin{split}
Im(\delta)(x)&=\frac{[a,x]-[a,x^*]^*}{2i}=\frac{ax-xa-(ax^*-x^*a)^*}{2i}\\
&=-ibx+ixb=\delta_{-ib}(x).
\end{split}
\end{gather*}

Conversely, if $Re(\delta)=\delta_a,\ Im(\delta)=\delta_b$ for some $a,b\in\mathcal{A}$, then for all $x\in\mathcal{A}$ the equality
$$\delta(x)=Re(\delta)(x)+i Im(\delta)(x)=[a,x]+i[b,x]=[a+ib,x]=\delta_{a+ib}(x)$$
holds.
\end{proof}

\begin{lemma}
\label{ll5} Let $\mathcal{A}$ and $\delta$ be the same as in
Corollary \ref{cl1} and let $\{z_i\}_{i\in I}$ be a central
decomposition of the unity $\mathbf{1}$. If
$\delta^{(z_i)}=\delta_{d_i},\ d_i\in z_i\mathcal{A}$ is an inner
derivation on  $z_i\mathcal{A}$ for every $i\in I$, then there
exists an operator $d\in LS(\mathcal{M})$, such that
$\delta(x)=[d,x]$ for all $x\in\mathcal{A}$ and $z_i d=d_i$ for
every $i\in I$.
\end{lemma}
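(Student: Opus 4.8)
The plan is to assemble the local generators $d_i$ into a single operator $d\in LS(\mathcal{M})$ by means of the $*$-isomorphism $\phi$ of Proposition \ref{p1}, and then to establish $\delta(x)=[d,x]$ by checking it componentwise against each $z_i$ and appealing to the uniqueness built into that proposition.

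First I would note that each $d_i$ belongs to $z_i\mathcal{A}\subset LS(\mathcal{M})$, so $\{d_i\}_{i\in I}$ is an admissible family for Proposition \ref{p1}. That proposition then yields a unique $d\in LS(\mathcal{M})$ with $z_id=z_id_i$ for all $i\in I$; since $d_i\in z_i\mathcal{A}$ forces $z_id_i=d_i$, the desired relation $z_id=d_i$ holds automatically. Here $d$ is only claimed to lie in $LS(\mathcal{M})$, not in $\mathcal{A}$, so no membership issue arises.

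Next, fixing $x\in\mathcal{A}$, I would compare $z_i\delta(x)$ with $z_i[d,x]$ for each $i$. Because $z_i\in\mathcal{P}(\mathcal{Z}(\mathcal{M}))\subset\mathcal{A}$, Lemma \ref{ll1} gives $z_i\delta(x)=\delta(z_ix)$; as $z_ix\in z_i\mathcal{A}$, Corollary \ref{cl1} identifies $\delta(z_ix)$ with $\delta^{(z_i)}(z_ix)$, and the hypothesis $\delta^{(z_i)}=\delta_{d_i}$ turns this into $[d_i,z_ix]$. On the other side, a short manipulation using the centrality of $z_i$ together with $z_id=d_i$ and $z_id_i=d_i$ shows that both $[d_i,z_ix]$ and $z_i[d,x]$ reduce to $d_ix-xd_i$; hence $z_i\delta(x)=z_i[d,x]$ for every $i\in I$.

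Finally, $\delta(x)$ and $[d,x]$ are two elements of $LS(\mathcal{M})$ whose products with each $z_i$ coincide, so the uniqueness clause of Proposition \ref{p1} (equivalently, injectivity of $\phi$) gives $\delta(x)=[d,x]$; since $x$ was arbitrary, $\delta=\delta_d$ on $\mathcal{A}$. I do not anticipate a genuine obstacle: all the substance is contained in the gluing afforded by Proposition \ref{p1}, and the only delicate point is the bookkeeping with the central projections when verifying $[d_i,z_ix]=z_i[d,x]$, which rests solely on $z_i$ being central and on the identities $z_id=d_i=z_id_i$.
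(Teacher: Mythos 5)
Your proposal is correct and follows essentially the same route as the paper: glue the family $\{d_i\}_{i\in I}$ via Proposition \ref{p1}, verify $z_i\delta(x)=z_i[d,x]$ componentwise using Lemma \ref{ll1} and the identities $z_id=d_i=z_id_i$, and conclude from the uniqueness (injectivity of $\phi$) in Proposition \ref{p1}. The only cosmetic difference is that the paper collapses your two-sided reduction to $d_ix-xd_i$ into the single chain $[d_i,z_iy]=[z_id,z_iy]=z_i[d,y]$.
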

\begin{proof}
Since $\{z_i\}_{i\in I}$ is a central decomposition
of the unity $\mathbf{1}$, $d_i\in z_i\mathcal{A}\subset z_i LS(\mathcal{M})$,  by Proposition \ref{p1} there exists $d\in LS(\mathcal{M})$, such that $z_i d=d_i$ for every $i\in I$. Using Lemma \ref{ll1} and equalities
$\delta^{(z_i)}(x)=[d_i,x],\ x\in z_i\mathcal{A},\ i\in I$ we have that for all $y\in\mathcal{A},\ i\in I$ equalities $z_i\delta(y)=\delta^{(z_i)}(z_i y)=[d_i,z_i y]=[z_id,z_i y]=z_i[d,y]$ hold. Since  $\sup_{i\in I}z_i=\mathbf{1}$ it follows that $\delta(y)=[d,y]$.
\end{proof}

\begin{lemma}
\label{ll6} Let $\delta$ be a derivation on a subalgebra $\mathcal{A}$ of $LS(\mathcal{M})$ and $\mathcal{P}(\mathcal{M})\subset\mathcal{A}$.
If $p,q\in\mathcal{P}(\mathcal{M})$ and $p\delta(q)p\geq \lambda p$ for some $\lambda>0$, then $$ r(qp)\delta(l(qp))r(qp)\geq\lambda r(qp).$$
\end{lemma}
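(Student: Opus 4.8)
The plan is to introduce the supports $e := r(qp)$ and $f := l(qp)$, both of which lie in $\mathcal{P}(\mathcal{M}) \subset \mathcal{A}$, and to record two inclusions at the outset: $e \leq p$ and $f \leq q$. Indeed $r(qp)$ is the support projection of $(qp)^*(qp) = pqp \leq p$, so $e \leq p$, while $\mathrm{ran}(qp) \subseteq \mathrm{ran}(q)$ forces the left support $l(qp)$ to satisfy $f \leq q$. The goal then reads $e\,\delta(f)\,e \geq \lambda e$.

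First I would transfer the hypothesis down to $e$. Since $e \leq p$ we have $ep = pe = e$, and conjugating the positive element $p\delta(q)p - \lambda p$ by the self-adjoint $e$ preserves positivity, giving
\[
 e\,\delta(q)\,e = e\bigl(p\delta(q)p\bigr)e \geq \lambda\, epe = \lambda e.
\]
Thus it remains only to replace $q$ by $f$ inside this expression, i.e. to show $e\,\delta(q - f)\,e = 0$.

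The crux is the observation that $g := q - f$ is orthogonal to $p$. Since $f = l(qp)$ is the left support of $qp$ we have $f(qp) = qp$, and since $f \leq q$ the relations $qg = gq = g$ hold; hence
\[
 gp = (gq)p = g(qp) = (q - f)(qp) = q(qp) - f(qp) = qp - qp = 0,
\]
and therefore $pg = (gp)^* = 0$ as well. Combined with $e \leq p$ this yields $eg = e(pg) = 0$ and $ge = (gp)e = 0$.

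Finally I would expand $\delta(g)$ using that $g$ is a projection, $\delta(g) = \delta(g)g + g\delta(g)$, and compute
\[
 e\,\delta(g)\,e = e\,\delta(g)(ge) + (eg)\,\delta(g)\,e = 0,
\]
since $ge = eg = 0$. Consequently $e\,\delta(f)\,e = e\,\delta(q)\,e - e\,\delta(g)\,e = e\,\delta(q)\,e \geq \lambda e$, which is precisely the asserted inequality $r(qp)\,\delta(l(qp))\,r(qp) \geq \lambda\, r(qp)$. I do not anticipate a serious obstacle: the single genuine idea is the orthogonality $pg = 0$ coming from the defining property of the left support, after which everything is routine derivation bookkeeping. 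The only points requiring care are keeping the support inclusions $e \leq p$ and $f \leq q$ straight and ensuring $\delta$ is applied only to projections, which belong to $\mathcal{A}$ by the standing hypothesis $\mathcal{P}(\mathcal{M}) \subset \mathcal{A}$.
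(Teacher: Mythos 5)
Your proof is correct, and it is organized differently from the paper's. The paper (with the roles of the letters swapped: it writes $e=l(qp)$, $f=r(qp)$) first establishes the two-sided identities $ef=qp=qf$ and $fe=pq=fq$, and then evaluates $f\delta(e)f$ directly by a Leibniz cancellation: it writes $f\delta(e)=\delta(fe)-\delta(f)e$, substitutes $fe=fq$, expands $\delta(fq)=\delta(f)q+f\delta(q)$, and observes that the two $\delta(f)$-terms cancel, leaving $f\delta(e)f=f\delta(q)f=f(p\delta(q)p)f\geq\lambda f$. You instead introduce the complementary projection $g=q-l(qp)$, prove the orthogonality $gp=pg=0$ (equivalently $l(qp)\,p=qp$, which is the mirror image of the paper's identity $ef=qf$), and then kill the unwanted term via $\delta(g)=\delta(g)g+g\delta(g)$ sandwiched between copies of $r(qp)$; linearity of $\delta$ finishes the argument. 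The two routes rest on the same underlying support identities, but yours isolates the single geometric fact (the part of $q$ outside the left support of $qp$ annihilates $p$) and makes everything after it mechanical, at the modest cost of an auxiliary projection; the paper's version avoids introducing $g$ and compresses the whole computation into one chain of equalities, at the cost of being harder to motivate. Your bookkeeping is sound throughout: the hypothesis transfer $e\delta(q)e\geq\lambda e$ uses only that conjugation by $e$ preserves positivity in $LS(\mathcal{M})$ and $epe=e$, the element $g$ is indeed a projection in $\mathcal{P}(\mathcal{M})\subset\mathcal{A}$ because $l(qp)\leq q$, and $\delta$ is only ever applied to elements of $\mathcal{A}$.
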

\begin{proof}
Set $e=l(qp)$ and $f=r(qp)$. It is clear that $eq=e$ and $pf=f$. In addition, $e=r((qp)^*)=r(pq)$ and $f=l((qp)^*)=l(pq)$.

Since $$ef=(eq)(pf)=e(qp)f=l(qp)qpr(qp)=qp=(qp)f=q(pf)=qf$$ and $$fe=f(pq)e=pq=f(pq)=(fp)q=fq,$$ we have

\begin{gather*}
\begin{split}
f\delta(e)f&=f(f\delta(e))f=f\delta(fe)f-f(\delta(f)e)f\\
&=f\delta(fq)f-f\delta(f)qf=
 f\delta(f)qf+f(f\delta(q))f-f\delta(f)qf\\
&=f\delta(q)f=f(p\delta(q)p)f\geq
\lambda fpf=\lambda f.
\end{split}
\end{gather*}
\end{proof}

For every $x\in LS(\mathcal{M})$ we set $s(x)=l(x)\vee r(x)$, where $l(x)$ and $r(x)$ are left and right supports of $x$ respectively.

Let $\mathcal{M}_{h,1}=\{x\in\mathcal{M}_h:\ \|x\|_{\mathcal{M}}\leq 1\}$. Fix a positive number $\lambda$ and a self-adjoint derivation
$\delta: LS(\mathcal{M})\longrightarrow LS(\mathcal{M})$.
The set of pairs $\mathcal{S}=\{(p_j,x_j)\in\mathcal{P}(\mathcal{M})\times\mathcal{M}_{h,1}:\ p_j\neq 0,\ j\in J\}$
is called \emph{$\lambda$-system }
(for the derivation $\delta$), if

(i). $(p_j\vee s(x_j))(p_i\vee s(x_i))=0$ and $(p_j\vee s(x_j))s(\delta(p_i))=0$ for $j\neq i$, $j,i\in J$;

(ii). $s(x_j)\sim p_j$ for all $j\in J$;

(iii). $p_j\delta(x_j)p_j\geq \lambda p_j$ for all $j\in J$.

The projection $\bigvee_{j\in J}(p_j\vee s(x_j)\vee s(\delta(p_j))\vee s(\delta(p_j\vee s(x_j))))$ is called the \emph{support} of the $\lambda$-system
$\mathcal{S}$ and is denoted by $s(\mathcal{S})$. If $\lambda$-system $\mathcal{S}$ is empty set, then we set $s(\mathcal{S})=0$.

A $\lambda$-system is said to be \emph{maximal} if it does not contained in any larger $\lambda$-system.

\begin{theorem}
\label{tt1}
Let $\mathcal{S}=\{(p_j,x_j)\}_{j\in J}$ be a maximal $\lambda$-system of a self-adjoint derivation
$\delta: LS(\mathcal{M})\longrightarrow LS(\mathcal{M})$, $g=s(\mathcal{S})^\bot$ and $\delta^{(g)}(x)=g\delta(x)g,\ x\in g LS(\mathcal{M})g$. Then
\begin{equation}
\label{tt1v}
\delta^{(g)}(g\mathcal{M}g)\subset g\mathcal{M}g.
\end{equation}
\end{theorem}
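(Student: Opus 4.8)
The plan is to argue against maximality: if $\delta^{(g)}$ failed to map $g\mathcal{M}g$ into itself, I would manufacture one further admissible pair and enlarge $\mathcal{S}$. First the reductions. Since left and right supports of locally measurable operators lie in $\mathcal{M}$, we have $s(\mathcal{S})\in\mathcal{P}(\mathcal{M})$, hence $g=s(\mathcal{S})^\bot\in\mathcal{P}(\mathcal{M})$; by Lemma \ref{ll2} the map $\delta^{(g)}$ is a derivation on $gLS(\mathcal{M})g$, and it is self-adjoint because $\delta$ is. As $g\mathcal{M}g=(g\mathcal{M}g)_h+i(g\mathcal{M}g)_h$ and $\delta^{(g)}$ is linear, (\ref{tt1v}) is equivalent to $g\delta(a)g\in\mathcal{M}$ for every $a\in(g\mathcal{M}g)_h$ with $\|a\|_{\mathcal{M}}\le1$. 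Suppose this fails for some such $a$. Then $b:=g\delta(a)g$ is a self-adjoint element of $gLS(\mathcal{M})g$ not lying in $\mathcal{M}$, hence unbounded; replacing $a$ by $-a$ if necessary I may assume that the spectral projection $p:=E^\bot_\lambda(b)$ of $b$ for $(\lambda,\infty)$ is nonzero. Here $p\le s(b)\le g$, and $pbp\ge\lambda p$ together with $pg=p$ gives the compression estimate $p\delta(a)p=pbp\ge\lambda p$.

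Next I isolate the part of the verification that is automatic. Writing $r_j:=p_j\vee s(x_j)$, the definition of $s(\mathcal{S})$ yields $r_j\le s(\mathcal{S})$, $s(\delta(p_j))\le s(\mathcal{S})$ and $s(\delta(r_j))\le s(\mathcal{S})$ for every $j\in J$. Consequently any candidate pair $(p_0,x_0)\in\mathcal{P}(\mathcal{M})\times\mathcal{M}_{h,1}$ whose combined support satisfies $p_0\vee s(x_0)\le g$ automatically fulfils condition (i): since $p_0\vee s(x_0)\le g=s(\mathcal{S})^\bot$, its product with each $r_j$ and with each $s(\delta(p_j))$ vanishes; and for the remaining relation $r_j\,s(\delta(p_0))=0$ I use $p_0\le g\le s(\delta(r_j))^\bot$, which forces $\delta(r_j)p_0=0$ and $p_0\delta(r_j)=0$, so that applying $\delta$ to $r_jp_0=0=p_0r_j$ gives $r_j\delta(p_0)=-\delta(r_j)p_0=0$ and $\delta(p_0)r_j=-p_0\delta(r_j)=0$, i.e. $r_j\,s(\delta(p_0))=0$. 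Thus everything reduces to producing one pair $(p_0,x_0)$ with $p_0\ne0$, $p_0\vee s(x_0)\le g$, satisfying (ii) $s(x_0)\sim p_0$ and (iii) $p_0\delta(x_0)p_0\ge\lambda p_0$; any such pair strictly enlarges $\mathcal{S}$ and contradicts maximality.

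It remains to build this pair from the compression estimate $p\delta(a)p\ge\lambda p$, with $p,s(a)\le g$, and this is where I expect the genuine difficulty. The obstruction is the simultaneous realization of (ii) and (iii): the estimate controls $\delta(a)$ on $p$, but $p$ bears no a priori relation to the support $s(a)$. When $s(a)\preceq p$ the construction is immediate — take $x_0=a$ and any subprojection $p_0\le p$ with $p_0\sim s(a)$, so that (ii) holds and (iii) survives because $p_0\le p$. In the opposite situation one is forced to pass to a compression $x_0$ of $a$ (taking $p_0=s(x_0)$, so that (ii) is trivial by reflexivity of $\sim$), but compressing introduces cross terms of the form $\delta(p)$ into (iii), which need not be controllable. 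The device designed to absorb exactly such cross terms is Lemma \ref{ll6}, which converts a compression estimate on a \emph{projection} $q$ into an admissible pair $x_0=l(qp),\ p_0=r(qp)$ with matched, equivalent left/right supports. The crux of the argument is therefore to reduce the compression estimate for the self-adjoint contraction $a$ to a comparable lower estimate for a projection $q\le g$, and to carry this out \emph{without any continuity hypothesis} on $\delta$; I expect this reduction — which must rest on the comparison theory of projections and the dimension-function bookkeeping of Section 3 rather than on a spectral approximation of $a$ — to be the main obstacle.
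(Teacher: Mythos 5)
Your setup is sound as far as it goes: the reduction to self-adjoint contractions, the observation that condition (i) of the definition of a $\lambda$-system holds automatically for any candidate pair supported under $g$, and the identification of Lemma \ref{ll6} as the device that produces a pair with equivalent supports, all match the paper. But the proof is not complete: the step you defer --- passing from the estimate $p\delta(a)p\geq\lambda p$ for a general self-adjoint contraction $a$ to an admissible new pair --- is precisely the substance of the theorem, and your guess about how to close it (comparison theory of projections and dimension-function bookkeeping from Section 3) points in the wrong direction; the paper's argument uses neither.

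The missing idea is a two-stage bootstrap through projections. Stage 1: run the maximality argument first for \emph{projections} $q\in\mathcal{P}(g\mathcal{M}g)$. If the spectral projection $p$ of $\delta^{(g)}(q)$ for $(\lambda,\infty)$ were nonzero, then $p\delta(q)p=\delta(pq)p-\delta(p)(pq)^*\neq 0$ forces $qp\neq 0$, and Lemma \ref{ll6} yields the pair $(r(qp),l(qp))$ enlarging $\mathcal{S}$ --- contradiction. Hence $\delta^{(g)}(q)\leq\lambda\mathbf{1}$; applying this to $g-q$ and using $\delta^{(g)}(g)=0$ (Lemma \ref{ll2}) gives the two-sided bound $\|\delta^{(g)}(q)\|_{\mathcal{M}}\leq\lambda$ for \emph{every} projection $q\leq g$. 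Stage 2: for $x\in\mathcal{M}_{h,1}\cap g\mathcal{M}g$ with $\delta^{(g)}(x)\notin g\mathcal{M}g$, take the spectral projection $r$ of $\delta^{(g)}(x)$ at the \emph{higher} threshold $3\lambda$ (not $\lambda$), so that $r\delta(x)r\geq 3\lambda r$. The cross terms you feared were uncontrollable are exactly what Stage 1 controls: $r\delta(r)xr+rx\delta(r)r=rg\delta(r)gxr+rxg\delta(r)gr$ has norm at most $2\lambda$, since $\|\delta^{(g)}(r)\|_{\mathcal{M}}\leq\lambda$ and $\|x\|_{\mathcal{M}}\leq 1$. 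Hence $y=rxr$ satisfies $r\delta(y)r\geq(3\lambda-2\lambda)r=\lambda r>0$, and the pair $(s(y),y)$ --- condition (ii) being trivial because here $p_0=s(x_0)$ --- enlarges $\mathcal{S}$, the final contradiction. Note also that your case distinction on whether $s(a)\preceq p$ is a red herring: the bootstrap works uniformly, with no comparison hypothesis and no continuity hypothesis on $\delta$, which is exactly the requirement you correctly insisted on but could not meet.
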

\begin{proof}
Let us first prove that
\begin{equation}
\label{tt1_1}
\delta^{(g)}(q)\subset g\mathcal{M}g\ \text{and}\ \|\delta^{(g)}(q)\|_{\mathcal{M}}\leq \lambda
\end{equation}
for any projection $q\in \mathcal{P}(g\mathcal{M}g)$.
Since $\delta^*=\delta$, it follows that $\delta(q)\in LS_h(\mathcal{M})$ and therefore $\delta^{(g)}(q)\in LS_h(g\mathcal{M}g)$.
Let $\delta^{(g)}(q)\neq 0$ and let $p$ be the spectral projection for  $\delta^{(g)}(q)$ corresponding to the interval $(\lambda,+\infty)$. It is clear that
$p\leq s(\delta^{(g)}(q))\leq g$.

Suppose that $p\neq 0$, then
\begin{equation}
\label{tt1_2}
0\neq \lambda p\leq p\delta^{(g)}(q)p=p\delta(q)p.
\end{equation}
Since $$0\neq
p\delta(q)p=\delta(pq)p-\delta(p)qp=\delta(pq)p-\delta(p)(pq)^*,$$
it follows that $qp=(pq)^*\neq 0$. Consequently, $$e=l(qp)\neq 0\ \text{and}\
f=r(qp)\neq 0,$$ in addition, $e\sim f$. Since
$g=s(\mathcal{S})^\bot$, from the inequalities $f\leq p\leq g$ and
$e\leq q\leq g$ it follows that $$(f\vee e)(p_j\vee s(x_j)))=0,\
(f\vee e)s(\delta(p_j))=0$$ and $$(p_j\vee
s(x_j))\delta(f)=\delta((p_j\vee s(x_j))f)-\delta(p_j\vee
s(x_j))f=0$$ for all $j\in J$. Moreover, according to (\ref{tt1_2})
and Lemma \ref{ll6} we have that $f\delta(e)f\geq \lambda f$. Thus, the system $\mathcal{S}\cup \{(f,e)\}$ is a
$\lambda$-system, that contradicts to maximality of the
$\lambda$-system $\mathcal{S}$. Consequently, $p=0$, which implies the inequality $\delta^{(g)}(q)\leq\lambda\mathbf{1}$. Similarly, for projection $(g-q)\leq g$ we obtain that
$$g(\delta(g-q))g=\delta^{(g)}(g-q)\leq\lambda\mathbf{1}.$$
By Lemma \ref{ll2}, $g\delta(g)g=0$, and therefore $-g\delta(q)g\leq\lambda\mathbf{1}$. Thus,
$$-\lambda\mathbf{1}\leq g\delta(q)g\leq\lambda\mathbf{1},$$
i.e. $\delta^{(g)}(q)\in g\mathcal{M}g$ and $\|\delta^{(g)}(q)\|_{\mathcal{M}}\leq \lambda$.

Now, suppose that the inclusion (\ref{tt1v}) false, i.e. there exists an element $x\in\mathcal{M}_{h,1}\cap(g\mathcal{M}g)$,
such that $\delta^{(g)}(x)\in LS_h(\mathcal{M})\setminus g\mathcal{M}g$. It means that the spectral projection $r=E^\bot_{3\lambda}(\delta^{(g)}(x))$
(or $r=E_{-3\lambda}(\delta^{(g)}(x))$)
for $\delta^{(g)}(x)$ corresponding to the interval $(3\lambda,+\infty)$ (respectively, $(-\infty,-3\lambda$), is not equal to zero.
Replacing, if necessary,  $x$ on $-x$, we may assume that  $r=E^\bot_{3\lambda}(\delta^{(g)}(x))\neq 0$.
It is clear that $r\leq s(\delta^{(g)}(x))\leq g$ and
\begin{equation}
\label{tt1_3}
0<3\lambda r\leq r\delta^{(g)}(x)r=r\delta(x)r.
\end{equation}
According to (\ref{tt1_1}) we have that $\|\delta^{(g)}(r)\|_{\mathcal{M}}\leq\lambda$, and therefore inclusion
$x\in\mathcal{M}_{h,1}\cap g\mathcal{M}g$ and equality $$r\delta(r)xr+rx\delta(r)r=rg\delta(r)gxr+rxg\delta(r)gr$$
imply that $$\|r\delta(r)xr+rx\delta(r)r\|_{\mathcal{M}}\leq 2\lambda.$$
Consequently,
\begin{equation}
\label{tt1_4}
-2\lambda r\leq r\delta(r)xr+rx\delta(r)r\leq 2\lambda r.
\end{equation}
Using (\ref{tt1_3}) and (\ref{tt1_4}) for $y=rxr$ we obtain that
\begin{equation}
\label{tt1_5}
r\delta(y)r=r\delta(rxr)r=r\delta(r)xr+r\delta(x)r+rx\delta(r)r\geq \lambda r>0,
\end{equation}
in particular, $y\neq 0$ and $q=s(y)\neq 0$.
Let us show that the collection $\mathcal{S}\cup \{(q,y)\}$ forms a $\lambda$-system.
Since $q\leq r\leq g$, from (\ref{tt1_5}) it follows that $q\delta(y)q\geq \lambda q$, in addition
$$(q\vee s(y))(p_j\vee s(x_j))=0=q(p_j\vee s(x_j)),\ q\delta(p_j)=0$$ and
$$(p_j\vee s(x_j))\delta(q)=\delta((p_j\vee s(x_j))q)-\delta(p_j\vee s(x_j))q=0$$ for all $j\in J$.
It means that the set $\mathcal{S}\cup \{(q,y)\}$ is a $\lambda$-system, that contradicts to maximality of the $\lambda$-system $\mathcal{S}$. From obtained contradiction follows the validity of inclusion  (\ref{tt1v}).
\end{proof}

\begin{lemma}
\label{ll7} If $\{x_j\}_{j\in J}\subset\mathcal{M}_{h,1},\ x_ix_j=0,\ i\neq j,\ i,j\in J$, then there exists a unique element $x\in\mathcal{M}_{h,1}$, denoted by $\sum_{j\in J}x_j$, such that $xs(x_j)=x_j$ for all $j\in J$ and $\sup_{j\in J}s(x_j)=s(x)$.
\end{lemma}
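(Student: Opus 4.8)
The plan is to construct $x$ as the ``diagonal'' operator that restricts to $x_j$ on the range of its support $s(x_j)$ and vanishes off the sum of these ranges, and then to recognise this operator intrinsically through the two stated conditions. The construction rests on one preliminary observation, which I would establish first: the supports are pairwise orthogonal. Writing $p_j := s(x_j)$, so that $p_j = l(x_j) = r(x_j)$ because $x_j$ is self-adjoint, the hypothesis $x_i x_j = 0$ for $i \neq j$ says that $x_i$ annihilates the range of $x_j$, hence (by continuity) annihilates $\overline{x_j H} = p_j H$. Since $\ker x_i = (\mathbf{1} - p_i)H$ for the self-adjoint $x_i$, this yields $p_j H \subseteq (\mathbf{1}-p_i)H$, i.e. $p_i p_j = 0$. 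Thus $\{p_j\}_{j \in J}$ is an orthogonal family in $\mathcal{P}(\mathcal{M})$, and I set $p := \sup_{j \in J} p_j$.

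Next I would define $x$ and verify that it lies in $\mathcal{M}_{h,1}$. For each finite $F \subset J$ put $x_F := \sum_{j \in F} x_j \in \mathcal{M}_h$. Because the $p_j$ are orthogonal and $x_j = x_j p_j$ with $\|x_j\|_{\mathcal{M}} \le 1$, for every $\xi \in H$ one has $\|(x_F - x_{F'})\xi\|^2 = \sum_{j \in F \bigtriangleup F'} \|x_j p_j \xi\|^2 \le \sum_{j \in J} \|p_j \xi\|^2 \le \|\xi\|^2 < \infty$, so only countably many terms act nontrivially on $\xi$ and the net $\{x_F\}$, ordered by inclusion, is strong-operator Cauchy. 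It therefore converges to a bounded operator $x$ with $\|x\|_{B(H)} \le 1$. Since $\mathcal{M}$ is a von Neumann algebra it is closed in the strong, hence weak, operator topology, so $x \in \mathcal{M}$; and weak-operator continuity of the involution together with $x_F^* = x_F$ forces $x = x^*$. Hence $x \in \mathcal{M}_{h,1}$. Moreover $x_k p_j = x_k p_k p_j = 0$ for $k \neq j$, so $x_F p_j = x_j$ whenever $F \ni j$, and passing to the limit gives $x p_j = x_j$, i.e. $x\, s(x_j) = x_j$ for all $j$.

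Finally I would compute $s(x)$ and settle uniqueness. As $p_j(\mathbf{1}-p) = 0$ we have $x_F(\mathbf{1}-p) = 0$ for all $F$, hence $x = xp$ in the limit; since $x$ is self-adjoint this gives $s(x) = l(x) = r(x) \le p$. Conversely $x p_j = x_j$ implies $\overline{x_j H} \subseteq \overline{xH}$, so $p_j = l(x_j) \le l(x) = s(x)$ for every $j$ and therefore $p \le s(x)$, proving $s(x) = p = \sup_{j \in J} s(x_j)$. For uniqueness, suppose $x' \in \mathcal{M}_{h,1}$ also satisfies $x'\, s(x_j) = x_j$ for all $j$ and $s(x') = p$. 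Then $(x - x')p_j = 0$ for every $j$, so $x - x'$ vanishes on the dense subset $\bigcup_j p_j H$ of $pH$, hence on all of $pH$; since $x = xp$ and, by $s(x') = p$, also $x' = x'p$, we conclude $x = x'$.

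The only genuinely delicate point is the passage to the (possibly uncountable) sum, and the preceding orthogonality step is exactly what neutralises it: the estimate $\sum_{j} \|p_j \xi\|^2 \le \|\xi\|^2$ guarantees strong-operator convergence of the partial-sum net, while the strong-operator closedness of the von Neumann algebra $\mathcal{M}$ keeps the limit inside $\mathcal{M}$. Everything else is a routine verification of the two characterising identities.
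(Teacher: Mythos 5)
Your proof is correct, but it follows a genuinely different route from the paper's. The paper notes that the $x_j$ pairwise commute (from $x_ix_j=0=(x_ix_j)^*=x_jx_i$), places the family inside a commutative von Neumann subalgebra $\mathcal{A}\subset\mathcal{M}$, and then invokes the fact that $\mathcal{A}_h$ is an order complete vector lattice in which a pairwise disjoint family bounded by $\mathbf{1}$ has a disjoint sum with the stated support property; uniqueness is then an algebraic computation with right supports, namely $(x-y)s(x_j)=0$ for all $j$ forces $s(x)\leq (r(x-y))^{\bot}$, whence $x-y=(x-y)s(x)=0$. You instead work spatially: you first prove that the supports $p_j=s(x_j)$ are pairwise orthogonal (a fact the paper absorbs into lattice disjointness), then build $x$ as the strong-operator limit of the net of finite partial sums, using the Bessel-type estimate $\sum_j\|p_j\xi\|^2\leq\|\xi\|^2$ for the Cauchy property and the SOT-closedness of $\mathcal{M}$ to keep the limit in $\mathcal{M}_{h,1}$, after which the two characterising identities are verified by hand. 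What the paper's approach buys is brevity, at the cost of appealing to the structure theory of Dedekind complete Riesz spaces; what yours buys is self-containedness, using only elementary Hilbert-space orthogonality and the bicommutant theorem, and it makes explicit the orthogonal-supports fact on which everything rests. One small slip in your uniqueness step: $\bigcup_j p_jH$ is not itself dense in $pH$ (a union of orthogonal subspaces is not a linear subspace); what is dense is its linear span, and since $x-x'$ is linear and bounded and vanishes on each $p_jH$, it vanishes on that span and hence on $pH$ -- so the argument stands after this rephrasing.
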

\begin{proof}
Denote by $\mathcal{A}$ the commutative von Neumann subalgebra of $\mathcal{M}$, containing the family $\{x_j\}_{j\in J}$.
Since $\mathcal{A}_h$ is an order complete vector lattice,
$\{x_j\}_{j\in J}$ is the family of pairwise disjoint element of $\mathcal{A}_h$ and
$|x_j|\leq\mathbf{1}\in\mathcal{A}$ for all $j\in J$, it follows that there exists a unique element $x$ of $\mathcal{A}_h$ such that $|x|\leq\mathbf{1}$, $xs(x_j)=x_j$ and $s(x)=\sup_{j\in J}s(x_j)$.

Let $y$ be another element of $\mathcal{M}_{h,1}$, such that $ys(x_j)=x_j$ for all $j\in J$ and $\sup_{j\in J}s(x_j)=s(y)$.
Then $(x-y)s(x_j)=x_j-x_j=0$ for any $j\in J$. Therefore $s(x)=\sup_{j\in J}s(x_j)\leq (r(x-y))^\bot$ and then $x-y=xs(x)-ys(y)=xs(x)-ys(x)=(x-y)s(x)=0$.
\end{proof}

\begin{lemma}
\label{ll8} Let $x\in LS_h(\mathcal{M}),\ p,q\in\mathcal{P}(\mathcal{M}),\ \rho,\lambda\in\mathbb{R},\ \rho<\lambda$,
\begin{equation}
\label{ll8_1}
pxp\leq \rho p
\end{equation}
and
\begin{equation}
\label{ll8_2}
qxq\geq \lambda q.
\end{equation}
Then $p\preceq q^\bot$ and $q\preceq p^\bot$.
\end{lemma}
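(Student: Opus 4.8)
The plan is to reduce the statement to the parallelogram law (Kaplansky's formula) for projections in a von Neumann algebra, after first showing that the two hypotheses force $p$ and $q$ to have trivial meet.

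First I would verify that $p \wedge q = 0$. Writing $e = p \wedge q$ and supposing $e \neq 0$, observe that $e \leq p$ gives $ep = pe = e$, so that by (\ref{ll8_1})
$$
exe = e(pxp)e \leq \rho\, epe = \rho e,
$$
while $e \leq q$ together with (\ref{ll8_2}) yields analogously $exe \geq \lambda e$. Combining these, $\lambda e \leq exe \leq \rho e$, hence $(\lambda - \rho)e \leq 0$. Since $\lambda - \rho > 0$ and $e \geq 0$, this forces $e = 0$, i.e. $p \wedge q = 0$.

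Next I would invoke the parallelogram law, which asserts $p - p\wedge q \sim p\vee q - q$ for any two projections in $\mathcal{M}$. With $p \wedge q = 0$ this becomes $p \sim (p \vee q) - q$. Since $q \leq p \vee q$, the projection $(p\vee q) - q$ is orthogonal to $q$ and therefore lies below $q^\bot$; hence $p \sim (p\vee q) - q \leq q^\bot$, which is precisely the assertion $p \preceq q^\bot$. The inequality $q \preceq p^\bot$ then follows by the same argument with the roles of $p$ and $q$ interchanged: applying the parallelogram law to $q$ and $p$ and using $q \wedge p = 0$ gives $q \sim (q\vee p) - p \leq p^\bot$.

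The computation is short, so there is no serious obstacle; the only point requiring care is the passage of the operator inequalities (\ref{ll8_1})--(\ref{ll8_2}) to the subprojection $e = p \wedge q$, where one must use $ep = pe = e$ and $eq = qe = e$ to compress them correctly and to rule out the degenerate case $e \neq 0$. Once $p \wedge q = 0$ is established, the parallelogram law --- standard in the von Neumann algebra references \cite{KR, Sak, Tak} already cited --- does all the remaining work.
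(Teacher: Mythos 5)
Your proof is correct and is essentially the same as the paper's: the paper also compresses the two inequalities by $r=p\wedge q$ to get $\lambda r\leq rxr\leq \rho r$, concludes $r=0$, and then applies the parallelogram law $p-p\wedge q\sim p\vee q-q\leq q^\bot$ (and symmetrically for $q\preceq p^\bot$). The only difference is that you spell out the compression and the use of Kaplansky's formula in more detail than the paper does.
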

\begin{proof}
Set $r=p\wedge q$. Multiplying both parts on both sides of inequalities (\ref{ll8_1}) and (\ref{ll8_2}) by $r$, we obtain that
$$\lambda r\leq rxr\leq \rho r,$$
that is possible if $r=0$ only. Therefore $p=p-p\wedge q\sim p\vee q -q\leq q^\bot$, i.e. $p\preceq q^\bot$. Similarly, $q\preceq p^\bot$.
\end{proof}

\begin{theorem}
\label{p4} Let $\mathcal{S}=\{(p_j,x_j)\}_{j\in J}$ be a
$\lambda$-system for a self-adjoint derivation $\delta:
LS(\mathcal{M})\longrightarrow LS(\mathcal{M})$, let $\mathcal{D}$ be a dimensional function on $\mathcal{P}(\mathcal{M})$. Then
\begin{equation}
\label{p4v}
\mathcal{D}(s(\mathcal{S}))\leq 8\mathcal{D}(E^\bot_{\rho}(\delta(\sum_{j\in J}x_j)))\ \text{for any}\ \rho<\lambda.
\end{equation}
\end{theorem}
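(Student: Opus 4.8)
The plan is to reduce estimate (\ref{p4v}) to a single comparison inequality for the orthogonal family $\{p_j\}$. First I would introduce the element $x=\sum_{j\in J}x_j\in\mathcal{M}_{h,1}$ furnished by Lemma \ref{ll7}; this is legitimate because condition (i) forces $s(x_i)s(x_j)=0$, and hence $x_ix_j=0$, for $i\neq j$. From $xs(x_j)=x_j$ and the pairwise orthogonality of the supports $s(x_k)$ one reads off the identities $xp_j=x_jp_j$ and $p_jx=p_jx_j$, which will be used repeatedly. Writing $q_j:=p_j\vee s(x_j)\vee s(\delta(p_j))\vee s(\delta(p_j\vee s(x_j)))$, so that $s(\mathcal{S})=\bigvee_{j}q_j$, the target will follow once I establish the single inequality $\sum_{j\in J}\mathcal{D}(p_j)\leq\mathcal{D}(E^\bot_\rho(\delta(x)))$ together with the pointwise bound $\mathcal{D}(q_j)\leq 8\,\mathcal{D}(p_j)$.

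The heart of the argument, and the step I expect to be the main obstacle, is to show that $\delta(x)$ is block-diagonal with respect to $\{p_j\}$ with the correct lower bound on the diagonal: $p_i\delta(x)p_j=0$ for $i\neq j$ and $p_j\delta(x)p_j\geq\lambda p_j$. For the diagonal I would set $w_j:=x-x_j$; since $s(w_j)\perp p_j$ we get $p_jw_j=w_jp_j=0$, whence $p_j\delta(w_j)p_j=-\delta(p_j)w_jp_j=0$ and therefore $p_j\delta(x)p_j=p_j\delta(x_j)p_j\geq\lambda p_j$ by condition (iii). The off-diagonal vanishing is more delicate and is exactly where the full force of condition (i) is needed. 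Using $xp_j=x_jp_j$ and the Leibniz rule, $\delta(x)p_j=\delta(x_j)p_j+x_j\delta(p_j)-x\delta(p_j)$, so $p_i\delta(x)p_j=p_i\delta(x_j)p_j+p_ix_j\delta(p_j)p_j-p_ix_i\delta(p_j)p_j$. The relation $(p_j\vee s(x_j))s(\delta(p_i))=0$ yields $(p_j\vee s(x_j))\delta(p_i)=0$ and, by self-adjointness, $\delta(p_i)(p_j\vee s(x_j))=0$; together with $p_ix_j=x_jp_i=0$ this makes each of the three terms vanish (e.g.\ $p_i\delta(x_j)p_j=-\delta(p_i)x_jp_j=-\delta(p_i)s(x_j)x_jp_j=0$, and $x_i\delta(p_j)=x_is(x_i)\delta(p_j)=0$).

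With block-diagonality in hand the comparison is immediate. For each finite $F\subset J$ put $P_F=\bigvee_{j\in F}p_j=\sum_{j\in F}p_j$; then $P_F\delta(x)P_F=\sum_{j\in F}p_j\delta(x)p_j\geq\lambda P_F$. Writing $t=E^\bot_\rho(\delta(x))$, the spectral calculus gives $t^\bot\delta(x)t^\bot\leq\rho t^\bot$, so Lemma \ref{ll8} (here $\rho<\lambda$) applied with these two projections yields $P_F\preceq t$, hence $\mathcal{D}(P_F)\leq\mathcal{D}(t)$. Since the $p_j$ are pairwise orthogonal, property (D6) gives $\sum_{j\in J}\mathcal{D}(p_j)=\sup_F\mathcal{D}(P_F)\leq\mathcal{D}(t)=\mathcal{D}(E^\bot_\rho(\delta(x)))$.

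Finally I would assemble the two pieces. Condition (ii) and (D3) give $\mathcal{D}(s(x_j))=\mathcal{D}(p_j)$. For an arbitrary projection $p$, the identities $p\delta(p)p=p^\bot\delta(p)p^\bot=0$ (valid since $\delta$ is a self-adjoint derivation) show that $\delta(p)=a+a^*$ with $a=p\delta(p)p^\bot$, $l(a)\leq p$, $l(a)\sim r(a)$; a short computation gives $s(\delta(p))=l(a)\vee r(a)$, so $\mathcal{D}(s(\delta(p)))\leq\mathcal{D}(l(a))+\mathcal{D}(r(a))=2\,\mathcal{D}(l(a))\leq 2\,\mathcal{D}(p)$ by (D6). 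Applying this to $p=p_j$ and to $p=p_j\vee s(x_j)$, and using $\mathcal{D}(p_j\vee s(x_j))\leq\mathcal{D}(p_j)+\mathcal{D}(s(x_j))=2\,\mathcal{D}(p_j)$, bounds the last two joinands of $q_j$ by $2\,\mathcal{D}(p_j)$ and $4\,\mathcal{D}(p_j)$ respectively. By subadditivity $\mathcal{D}(q_j)\leq(1+1+2+4)\mathcal{D}(p_j)=8\,\mathcal{D}(p_j)$, and combining (D6) for $s(\mathcal{S})=\bigvee_j q_j$ with the comparison inequality of the previous paragraph gives $\mathcal{D}(s(\mathcal{S}))\leq\sum_j\mathcal{D}(q_j)\leq 8\sum_j\mathcal{D}(p_j)\leq 8\,\mathcal{D}(E^\bot_\rho(\delta(x)))$, which is (\ref{p4v}).
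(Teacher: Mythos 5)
Your proof is correct, and its skeleton coincides with the paper's: the same element $x=\sum_{j\in J}x_j$ from Lemma \ref{ll7}, the same algebraic exploitation of condition (i) via the Leibniz rule and self-adjointness, the same application of Lemma \ref{ll8} against the spectral projection $E_\rho(\delta(x))$, and the same $1+1+2+4=8$ accounting for the four joinands of the support. The one genuine difference is how you handle the passage from the diagonal inequalities $p_j\delta(x)p_j\geq\lambda p_j$ to a comparison involving the whole family. The paper proves the single global inequality $p\,\delta(x)\,p\geq\lambda p$ for the infinite supremum $p=\sup_j p_j$: it shows $p_i(p\delta(x)p)=p_i\delta(x_i)p_i$, sets $y=p\delta(x)p-\lambda p$, and argues that $y_-p_i=0$ for all $i$ forces $(pyp)_-=0$, which requires the limiting result \cite[Prop.~2.4.1(ix)]{MCh}. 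You instead prove explicit off-diagonal vanishing $p_i\delta(x)p_j=0$ (which is implicit, but never isolated, in the paper's identity (\ref{p4_1_4})) and then only ever need the inequality for \emph{finite} sums $P_F$, where it follows by adding finitely many operator inequalities; Lemma \ref{ll8} is applied to each $P_F$ and the supremum is taken at the level of the dimension function via (D2)/(D5). This buys you a more elementary argument that avoids the negative-part computation and the citation of \cite[Prop.~2.4.1(ix)]{MCh} entirely, at the cost of applying Lemma \ref{ll8} infinitely many times instead of once — a favourable trade. One cosmetic point: in your final assembly you invoke (D6) for the possibly uncountable family $\{q_j\}_{j\in J}$, whereas (D6) as stated covers only sequences; the correct reference is the exhaustion by finite subsets together with (D5), exactly the device you already use for $\sup_F\mathcal{D}(P_F)$ (the paper's own final step has the same labeling quirk).
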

\begin{proof}
Set $x=\sum_{j\in J}x_j$ (see Lemma \ref{ll7}) and $p=\sup_{j\in J}p_j$. Let us show that
\begin{equation}
\label{p4_1}
p\delta(x)p\geq\lambda p.
\end{equation}
Since $(p_j\vee s(x_j))(p_i\vee s(x_i))=0$ and $(p_j\vee
s(x_j))s(\delta(p_i))=0$ for $i\neq j$, it follows that
$x_ip_j=x_is(x_i)p_j=0$ and
$x_i\delta(p_j)=x_is(x_i)s(\delta(p_j))\delta(p_j)=0$ for $i\neq
j$. Therefore $$\delta(x_i)p_j=\delta(x_ip_j)-x_i\delta(p_j)=0,$$
that implies equality $$s(\delta(x_i))p_j=0\ \text{for}\ i\neq
j.$$ From here and from the equality $p=\sup_{j\in J}p_j$ it
follows that $$s(\delta(x_i))p=s(\delta(x_i))p_i.$$ Thus,
\begin{equation}
\label{p4_1_1}
\delta(x_i)p=\delta(x_i)s(\delta(x_i))p=\delta(x_i)p_i.
\end{equation}
By Lemma \ref{ll7}, we have that
\begin{equation}
\label{p4_1_2}
p_ix=p_is(x)x=(p_i\sup_{j\in J}s(x_j))x=p_is(x_i)x=p_ix_i
\end{equation}
and
\begin{equation}
\label{p4_1_2_1}
x_ip=x_i(s(x_i)\sup_{j\in J}p_j)=x_i(s(x_i)p_i)=x_ip_i.
\end{equation}
Similarly,
\begin{equation}
\label{p4_1_3}
\begin{split}
\delta(p_i)xp&=\delta(p_i)(s(\delta(p_i))\sup_{j\in J}s(x_j))xp\\&=
\delta(p_i)s(x_i)xp=\delta(p_i)x_ip=\delta(p_i)x_ip_i.
\end{split}
\end{equation}
By (\ref{p4_1_1}), (\ref{p4_1_2}) and (\ref{p4_1_2_1}), we obtain
\begin{gather*}
\begin{split}
\delta(p_ix)p&=\delta(p_ix_i)p=\delta(p_i)x_ip+p_i\delta(x_i)p\\
&= \delta(p_i)x_ip_i+p_i\delta(x_i)p_i=\delta(p_ix_i)p_i,
\end{split}
\end{gather*}
that according to (\ref{p4_1_3}) implies equalities
\begin{gather*}
\begin{split}
p_i(p\delta(x)p)&=p_i\delta(x)p=\delta(p_ix)p-\delta(p_i)xp\\
&= \delta(p_ix_i)p_i-\delta(p_i)x_ip_i=p_i\delta(x_i)p_i.
\end{split}
\end{gather*}
Hence,
\begin{equation}
\label{p4_1_4}
p_i(p\delta(x)p)=p_i\delta(x_i)p_i,
\end{equation}
in particular, the projection  $p_i$ commutates with the operator
$p\delta(x)p$. Set $y=p\delta(x)p-\lambda p$ and by
$y_-=(|y|-y)/2$ denote the negative part of the operator $y$.
Since $p_iy=yp_i$ (see (\ref{p4_1_4})) and
$p_i\delta(x_i)p_i\geq\lambda p_i$ (see the definition of
$\lambda$-system), it follows that
\begin{equation}
\label{p4_1_5}
y_-p_i=p_iy_-=(p_i(p\delta(x)p-\lambda p))_-\stackrel{(\ref{p4_1_4})}{=}(p_i\delta(x_i)p_i-\lambda p_i)_-=0
\end{equation}
for all $i\in J$. From equalities ($\ref{p4_1_5}$) and $p=\sup_{j\in J}p_j$ according to \cite[Prop. 2.4.1(ix)]{MCh}, it follows that
\begin{equation}
\label{p4_1_5_1} (pyp)_-=p(p\delta(x)p-\lambda p)_-p=py_-p=0.
\end{equation}
Therefore
$$pyp=(pyp)_+-(pyp)_-\stackrel{(\ref{p4_1_5_1})}{=}(pyp)_+\geq 0,$$
that implies inequality (\ref{p4_1}).

Fix a real number  $\rho<\lambda$ and set
$q=E_{\rho}(\delta(x))$. By Lemma \ref{ll8}, we obtain
\begin{equation}
\label{p4_3}
p\preceq q^\bot.
\end{equation}
For every fixed $j\in J$ we have that
$$\delta(p_j)=\delta(p_j^2)=\delta(p_j)p_j+p_j\delta(p_j)=\delta(p_j)p_j+(\delta(p_j)p_j)^*$$
and therefore
$$s(\delta(p_j))\leq l(\delta(p_j)p_j)\vee p_j,$$
that implies
\begin{equation}
\label{p4_4}
\mathcal{D}(s(\delta(p_j)))\leq \mathcal{D}(l(\delta(p_j)p_j))+\mathcal{D}(p_j).
\end{equation}
Since $l(\delta(p_j)p_j)\sim r(\delta(p_j)p_j)\leq p_j$, according to (\ref{p4_4}) we have
\begin{equation}
\label{p4_5}
\mathcal{D}(s(\delta(p_j)))\leq 2\mathcal{D}(p_j)
\end{equation}
for all $j\in J$. Similarly,
$$\mathcal{D}(s(\delta(p_j\vee s(x_j))))\leq 2\mathcal{D}(p_j\vee s(x_j)),$$
and in view of equivalence $p_j\sim s(x_j)$ (see the definition of $\lambda$-system) we obtain
\begin{equation}
\label{p4_6}
\mathcal{D}(s(\delta(p_j\vee s(x_j))))\leq 4\mathcal{D}(p_j).
\end{equation}
Denote by $A$ the directed set of all finite subsets of $J$ ordered by inclusion and for every $\alpha\in A$ set
$$e_\alpha=\bigvee_{j\in\alpha}(p_j\vee s(x_j)\vee s(\delta(p_j))\vee s(\delta(p_j\vee s(x_j)))).$$
From properties (D2), (D3) of the dimensional function $\mathcal{D}$ and from inequalities (\ref{p4_3}), (\ref{p4_5}) and (\ref{p4_6}) we have that
\begin{gather*}
\begin{split}
\mathcal{D}(e_\alpha)&\leq\sum_{j\in\alpha}\mathcal{D}(p_j\vee s(x_j)\vee s(\delta(p_j))\vee s(\delta(p_j\vee s(x_j))))\\
&\leq
\sum_{j\in\alpha}(\mathcal{D}(p_j)+\mathcal{D}(s(x_j))+\mathcal{D}(s(\delta(p_j)))+\mathcal{D}(s(\delta(p_j\vee s(x_j)))))\\
&\leq
8\sum_{j\in\alpha}\mathcal{D}(p_j)=8\mathcal{D}(\sum_{j\in\alpha}p_j)\leq 8\mathcal{D}(p)\leq 8\mathcal{D}(q^\bot).
\end{split}
\end{gather*}
Since $e_\alpha\uparrow s(\mathcal{S})$ the last inequality and property (D6) of the dimensional function $\mathcal{D}$ imply that
$$\mathcal{D}(s(\mathcal{S}))=\mathcal{D}(sup_{\alpha\in A}e_\alpha)=sup_{\alpha\in A}\mathcal{D}(e_\alpha)\leq 8\mathcal{D}(q^\bot).$$
\end{proof}

\section{Automatic innerness of continuous derivations on algebra $LS(\mathcal{M})$}

Let $\mathcal{M}$ be an arbitrary von Neumann algebra and let
$\delta_a(x)=[a,x],\\ a,x\in LS(\mathcal{M})$ be an inner derivation on $LS(\mathcal{M})$. Since
$(LS(\mathcal{M}),t(\mathcal{M}))$ is a topological algebra, every derivation
 $\delta_a$ is continuous with respect to the topology
$t(\mathcal{M})$.

The main result of this section is the proof of inverse implication.

\begin{theorem}
\label{t1}
Every derivation on the algebra $LS(\mathcal{M})$ continuous with respect to the topology $t(\mathcal{M})$ is inner derivation.
\end{theorem}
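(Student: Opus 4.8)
The plan is to reduce the general case to the self-adjoint one and then exploit the machinery of $\lambda$-systems developed in Section~3. First I would invoke Lemmas~\ref{ll3} and~\ref{ll4}: a derivation $\delta$ is $t(\mathcal{M})$-continuous (resp.\ inner) if and only if both $\mathrm{Re}(\delta)$ and $\mathrm{Im}(\delta)$ are $t(\mathcal{M})$-continuous (resp.\ inner). Hence it suffices to treat a single $t(\mathcal{M})$-continuous \emph{self-adjoint} derivation $\delta\colon LS(\mathcal{M})\to LS(\mathcal{M})$ and produce an $a=a^*\in LS(\mathcal{M})$ with $\delta=\delta_a$. By Proposition~\ref{p1} and Lemma~\ref{ll5}, innerness is a local question with respect to any central decomposition of $\mathbf{1}$: it is enough to find, for each piece $z_i\mathcal{M}$ of a suitable decomposition, an inner implementing element, and then glue them. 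This lets me assume at will that $\mathcal{Z}(\mathcal{M})$ is $\sigma$-finite, so that Proposition~\ref{p3} furnishes the concrete neighbourhood base $V(\varepsilon,\beta,\gamma)$ of~\eqref{v2} for $t(\mathcal{M})$.

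The heart of the argument is to show that a maximal $\lambda$-system $\mathcal{S}$ has \emph{small} support, and in fact that for large enough $\lambda$ it is essentially trivial. Here I would use continuity of $\delta$: given any neighbourhood $V(\varepsilon,\beta,\gamma)$, continuity at $0$ yields $\lambda>0$ so that $\delta$ maps the unit ball $\mathcal{M}_{h,1}$ into a set controlled by $\lambda$; quantitatively, for $x=\sum_{j\in J}x_j\in\mathcal{M}_{h,1}$ (Lemma~\ref{ll7}) the projection $E^\bot_\rho(\delta(x))$ becomes $t(\mathcal{M})$-small once $\rho<\lambda$ is taken large. Feeding this into Theorem~\ref{p4}, the estimate
\begin{equation*}
\mathcal{D}(s(\mathcal{S}))\leq 8\,\mathcal{D}(E^\bot_{\rho}(\delta(\textstyle\sum_{j\in J}x_j)))
\end{equation*}
forces the dimension of the support $s(\mathcal{S})$ to be arbitrarily small in the local-measure sense as $\lambda\to\infty$. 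On the complementary projection $g=s(\mathcal{S})^\bot$, Theorem~\ref{tt1} gives $\delta^{(g)}(g\mathcal{M}g)\subset g\mathcal{M}g$, i.e.\ the compression of $\delta$ restricts to a genuine \emph{bounded} derivation of the von Neumann algebra $g\mathcal{M}g$ (the norm bound $\le\lambda$ coming from~\eqref{tt1_1}). By the classical Sakai--Kadison theorem such a derivation is inner on $g\mathcal{M}g$, implemented by some $a_g\in g\mathcal{M}g$ with $\|a_g\|\lesssim\lambda$.

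To assemble a global implementing element I would run this for an increasing family of projections $g$ whose supports $s(\mathcal{S})^\bot$ exhaust $\mathbf{1}$ in $t(\mathcal{M})$ as $\lambda\to\infty$, obtaining local solutions $a_g$ that are compatible (their commutators all agree with $\delta$ where defined). The $t(\mathcal{M})$-completeness of $LS(\mathcal{M})$ and Proposition~\ref{p2_2} (density of $\mathcal{M}$) then let me pass to a $t(\mathcal{M})$-limit $a\in LS(\mathcal{M})$ with $\delta(x)=[a,x]$ first for $x\in\mathcal{M}$ and then, by $t(\mathcal{M})$-continuity of both sides together with density of $\mathcal{M}$, for all $x\in LS(\mathcal{M})$. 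Passing from self-adjoint $a_g$ to a self-adjoint limit and then recombining the real and imaginary derivations via Lemma~\ref{ll4} finishes the proof.

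The main obstacle I anticipate is the \emph{patching/convergence step}: the local implementers $a_g$ are only determined up to a central element (the ambiguity in Sakai--Kadison), and one must choose them coherently so that the net converges in $t(\mathcal{M})$ rather than merely defining a consistent commutator. Controlling this central indeterminacy---so that the limit lands in $LS(\mathcal{M})$ and actually implements $\delta$ on all of $LS(\mathcal{M})$ and not just on $\mathcal{M}$---is where the delicate use of the dimension estimate of Theorem~\ref{p4}, the neighbourhood base of Proposition~\ref{p3}, and completeness will be needed.
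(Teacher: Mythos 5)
Your outline reproduces the paper's strategy faithfully (reduction to self-adjoint derivations, localization to $\sigma$-finite centre, $\lambda$-systems, the dimension estimate of Theorem~\ref{p4}, Theorem~\ref{tt1}, Sakai--Kadison, completeness), but it stops exactly at the point you yourself flag as ``the main obstacle,'' and that obstacle is where the substance of the paper's proof lies; your proposal does not supply the ideas that close it. Three things are missing. First, the projections $g=s(\mathcal{S})^\bot$ attached to maximal $\lambda$-systems for different values of $\lambda$ are not naturally increasing (maximal systems for different parameters are unrelated), so ``an increasing family of projections $g$ whose supports exhaust $\mathbf{1}$'' does not come for free; the paper manufactures monotonicity by choosing maximal $2\Delta(2^{-n},2^{-n})$-systems $\mathcal{S}_n$, setting $q'_n=s(\mathcal{S}_n)^\bot$, and passing to $q_n=\bigwedge_{k=n+1}^\infty q'_k$, for which the estimates survive in the form $\tau(z_n^\bot)\le 2^{-n}$ and $\mathcal{D}(z_nq_n^\bot)\le 2^{-n+3}\varphi(z_n)$. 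Second, and most importantly, the central indeterminacy in Sakai--Kadison is resolved not by a soft compatibility argument but by an explicit inductive correction: if $c_{n+1}\in q_{n+1}\mathcal{M}q_{n+1}$ implements $\delta^{(q_{n+1})}$, then $d_n-q_nc_{n+1}q_n$ is central in $q_n\mathcal{M}q_n$, and by Dixmier's extension result it equals $zq_n$ for some $z$ central in $q_{n+1}\mathcal{M}q_{n+1}$; setting $d_{n+1}=c_{n+1}+z$ yields the coherence $q_nd_mq_n=d_n$ for $n\le m$. Without this step there is no reason the implementers should converge, since each is only defined modulo the centre of its own corner. Third, the Cauchy property itself needs an argument: coherence gives $d_m-d_n=(q_m-q_n)d_mq_m+q_nd_m(q_m-q_n)$, whose left and right supports are, up to equivalence, dominated by $q_n^\bot$, whence $d_m-d_n\in V(1/n,2^{-n},2^{-n+4})$; only then does completeness of $(LS(\mathcal{M}),t(\mathcal{M}))$ produce the limit $d$.

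Two smaller inaccuracies are worth noting. The mechanism is not that ``for large enough $\lambda$ the maximal $\lambda$-system is essentially trivial'' with $\lambda\to\infty$: the parameter $\lambda=2\Delta(\beta,\gamma)$ is dictated by continuity and may grow without bound as $\beta,\gamma\to 0$; what becomes small is not the system but (after cutting by a central projection of trace $\le\beta$) the dimension of its support, which is $\le 8\gamma$ times the cut. And in the final limit $\delta(x)=\lim_n[d_n,q_nxq_n]$ one needs $\delta^{(q_n)}$ to be implemented by $d_n$ on all of $LS(q_n\mathcal{M}q_n)$, not merely on $q_n\mathcal{M}q_n$, since $q_nxq_n$ is in general unbounded; this requires the density of $q_n\mathcal{M}q_n$ in $(LS(q_n\mathcal{M}q_n),t(q_n\mathcal{M}q_n))$ together with the fact that $t(\mathcal{M})$ induces precisely that topology on $q_nLS(\mathcal{M})q_n$ --- the single global density argument you sketch at the end does not by itself deliver this corner-wise statement.
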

\begin{proof}
Let $\delta$ be an arbitrary derivation on the $*$-algebra $LS(\mathcal{M})$ and let $\delta$ be continuous with respect to the topology  $t(\mathcal{M})$. By Lemmas  \ref{ll3} and \ref{ll4}, we may assume that $\delta$ is a self-adjoint derivation.

Choose a central decomposition $\{z_i\}_{i\in I}$ of the unity
$\mathbf{1}$, such that every Boolean algebra
$z_i\mathcal{P}(\mathcal{Z}(\mathcal{M}))$ has a countable type,
$i\in I$. By Corollary \ref{cl1} the restriction $\delta^{(z_i)}$
of the derivation $\delta$ to $z_i
LS(\mathcal{M})=LS(z_i\mathcal{M})$ is a
$t(z_i\mathcal{M})$-continuous derivation on
$LS(z_i\mathcal{M})$. If every derivation $\delta^{(z_i)},\
i\in I$ is inner, then, by Lemma \ref{ll5}, the derivation $\delta$ is inner too. Thus, in the proof of Theorem \ref{t1} we may assume that the centre
$\mathcal{Z}(\mathcal{M})$ of von Neumann algebra $\mathcal{M}$ is
$\sigma$-finite algebra. In this case, there exist a faithful normal finite trace $\tau(x)=\int\varphi(x) d\mu$ on $\mathcal{Z}(\mathcal{M})$ and the vector topology
$t(\mathcal{M})$ has the basis of neighbourhoods of zero consists of the sets
$V(\varepsilon,\beta,\gamma)$ given by (\ref{v2}) (see
Proposition \ref{p3}). Since the derivation $\delta$ is
$t(\mathcal{M})$-continuous, for arbitrary
$\varepsilon,\beta,\gamma>0$ there exist
$\varepsilon_1,\beta_1,\gamma_1>0$, such that
$\delta(V(\varepsilon_1,\beta_1,\gamma_1))\subset
V(\varepsilon,\beta,\gamma)$. It is clear that
$$\mathcal{M}_1:=\{x\in\mathcal{M}:\ \|x\|_{\mathcal{M}}\leq
1\}\subset
V(1,\beta_1,\gamma_1)=\varepsilon_1^{-1}V(\varepsilon_1,\beta_1,\gamma_1),$$
and therefore $$\delta(\mathcal{M}_1)\subset
\varepsilon_1^{-1}V(\varepsilon,\beta,\gamma)=V(\varepsilon/\varepsilon_1,\beta,\gamma).$$
Hence, for $t(\mathcal{M})$-continuous self-adjoint derivation $\delta: LS(\mathcal{M})\rightarrow LS(\mathcal{M})$ and for arbitrary positive numbers  $\beta$ and
$\gamma$ there exists a number $\Delta(\beta,\gamma)$, such that
\begin{equation}
\label{t1_1}
\delta(\mathcal{M}_1)\subset V(\Delta(\beta,\gamma),\beta,\gamma).
\end{equation}
Let $\mathcal{D},\varphi,\tau$ be the same as in the definition of the set $V(\varepsilon,\beta,\gamma)$ from (\ref{v2}).
Take an arbitrary $2\Delta(\beta,\gamma)$-system $\mathcal{S}=\{(p_j,x_j)\}_{j\in J}$ for the derivation $\delta$
and show that there exists a central projection  $z\in\mathcal{P}(\mathcal{Z}(\mathcal{M}))$, such that
\begin{equation}
\label{t1_2}
\tau(z^\bot)\leq \beta\ \text{and}\ \mathcal{D}(z s(\mathcal{S}))\leq 8\gamma\varphi(z).
\end{equation}
If $\mathcal{S}$ is empty set, then $s(\mathcal{S})=0$ and, in this case, relations (\ref{t1_2}) hold for $z=\mathbf{1}$.
Now, let  $\mathcal{S}=\{(p_j,x_j)\}_{j\in J}$ is non empty $2\Delta(\beta,\gamma)$-system.
By Lemma \ref{ll7} there exists $x=\sum_{j\in J}x_j\in\mathcal{M}_{h,1}$. From (\ref{t1_1}) it follows that
$\delta(x)\in V(\Delta(\beta,\gamma),\beta,\gamma)$ for all $\beta,\gamma>0$. Therefore there exist projections
$z\in\mathcal{P}(\mathcal{Z}(\mathcal{M}))$ and $q\in\mathcal{P}(\mathcal{M})$, such that
\begin{equation}
\label{t1_2_1}
\begin{split}
\tau(z^\bot)\leq \beta,\ \delta(x)q\in\mathcal{M},\
\|\delta(x)q\|_{\mathcal{M}}\leq \Delta(\beta,\gamma)\\
\text{and}\ \mathcal{D}(zq^\bot)\leq \gamma \varphi(z).
\end{split}
\end{equation}
Since $x=x^*$ and $\delta=\delta^*$, it follows that $\delta(x)=(\delta(x))^*$ and, according to (\ref{t1_2_1}), we have
\begin{equation}
\label{t1_2_2}
-\Delta(\beta,\gamma)q\leq q\delta(x)q\leq \Delta(\beta,\gamma)q.
\end{equation}
Set $\rho=\frac{3}{2}\cdot\Delta(\beta,\gamma)$. Using inequalities  (\ref{t1_2_2}) and
$$\rho E^\bot_{\rho}(\delta(x))\leq E^\bot_{\rho}(\delta(x))\delta(x)E^\bot_{\rho}(\delta(x)),$$
we obtain that $E^\bot_{\rho}(\delta(x))\preceq q^\bot$ (Lemma
\ref{ll8}). Consequently, $zE^\bot_{\rho}(\delta(x))\preceq
zq^\bot$ and, by (\ref{p4v}) and (\ref{t1_2_1}), we have that
\begin{gather*}
\begin{split}
\mathcal{D}(zs(\mathcal{S}))&\stackrel{(D4)}{=}\varphi(z)\mathcal{D}(s(\mathcal{S}))\stackrel{(\ref{p4v})}{\leq}
8\varphi(z)\mathcal{D}(E^\bot_{\rho}(\delta(x)))\\
&\stackrel{(D4)}{=}
8\mathcal{D}(zE^\bot_{\rho}(\delta(x)))\stackrel{(D2),(D3)}{\leq}
8\mathcal{D}(zq^\bot)\stackrel{(\ref{t1_2_1})}{\leq} 8\gamma \varphi(z),
\end{split}
\end{gather*}
i.e. (\ref{t1_2}) holds.

For every $n\in\mathbb{N}$ choose a maximal (possible, empty)
$2\Delta(2^{-n},2^{-n})$-system $\mathcal{S}_n$ for the derivation $\delta$.
Set $q'_n=s(\mathcal{S}_n)^\bot$. By Theorem \ref{tt1}, we have that
\begin{equation}
\label{t1_2_3}
\delta^{(q'_n)}(q'_n\mathcal{M}q'_n)\subset q'_n\mathcal{M}q'_n
\end{equation}
for all $n\in\mathbb{N}$. Moreover, in view of  (\ref{t1_2}),
there exists a projection
$z'_n\in\mathcal{P}(\mathcal{Z}(\mathcal{M}))$, such that
\begin{equation}
\label{t1_2_4}
\tau(z_{n}^{'\bot})\leq 2^{-n}\ \text{and}\ \mathcal{D}(z'_n q_n^{'\bot})\leq 2^{-n+3} \varphi(z'_n).
\end{equation}
It is clear that the sequences of projections $q_n=\bigwedge_{k=n+1}^\infty q'_k$ and $z_n=\bigwedge_{k=n+1}^\infty z'_k$ are increasing, in addition
\begin{equation}
\label{t1_2_5}
\tau(z_n^\bot)\leq\tau(\sup_{k\geq n+1}z_{n}^{'\bot})\leq\sum_{k\geq n+1}\tau(z_{n}^{'\bot})\stackrel{(\ref{t1_2_4})}{\leq}\sum_{k\geq n+1}
2^{-k}=2^{-n}
\end{equation}
and
\begin{equation}
\label{t1_2_6}
\begin{split}
\mathcal{D}(z_nq_n^\bot)&=\varphi(z_n)\mathcal{D}(\sup_{k\geq n+1}z_nq_k^{'\bot}) \cr &\leq\varphi(z_n)\mathcal{D}(\sup_{k\geq n+1}z'_kq_k^{'\bot})
\cr
&\stackrel{(D6)}{\leq}
\varphi(z_n)\sum_{k\geq n+1}\mathcal{D}(z'_kq_k^{'\bot})\cr
&\stackrel{(\ref{t1_2_4})}{\leq}
\varphi(z_n)\sum_{k\geq n+1}2^{-k+3}\varphi(z'_k)\cr
&=
\sum_{k\geq n+1}2^{-k+3}\varphi(z_nz'_k)\cr
&=
\sum_{k\geq n+1}2^{-k+3}\varphi(z_n)=2^{-n+3}\varphi(z_n).
\end{split}
\end{equation}

Consider the derivation $\delta^{(q_n)}$ on $q_n LS(\mathcal{M})q_n$ and show that
$$\delta^{(q_n)}(q_n\mathcal{M}q_n)\subset q_n\mathcal{M}q_n.$$

If $x\in q_n\mathcal{M}q_n$, then $x\in
q'_{n+1}\mathcal{M}q'_{n+1}$ and therefore, by (\ref{t1_2_3}),
$$\delta^{(q_n)}(x)=q_n\delta(x)q_n=q_n q'_{n+1} \delta(x)q'_{n+1}
q_n=$$ $$q_n\delta^{(q'_{n+1})}(x)q_n\subset q_n q'_{n+1} \mathcal{M}
q'_{n+1} q_n = q_n \mathcal{M} q_n.$$
Hence, the restriction $\delta^{(q_n)}|_{q_n\mathcal{M}q_n}$ of the derivation  $\delta^{(q_n)}$ to $q_n\mathcal{M}q_n$ is a derivation on the von Neumann algebra  $q_n\mathcal{M}q_n$. By Sakai-Kadison Theorem
\cite[Theorem 4.1.6]{Sak}, there exists an element $c_n\in
q_n\mathcal{M}q_n$, such that $\delta^{(q_n)}(x)=[c_n,x]$ for all $x\in
q_n\mathcal{M}q_n$.

Now, construct a sequence $\{d_n\}$ of $\mathcal{M}$, such that
\begin{equation}
\label{t1_2_6_1}
\begin{split}
q_nd_mq_n=d_n\ \text{for all}\ n\leq m,\cr
\delta^{(q_n)}(x)=[d_n,x]\ \text{for all}\ x\in q_n\mathcal{M}q_n.
\end{split}
\end{equation}
Set $d_1=c_1$ and assume that elements $d_1,\dots,d_n$ are already
constructed. Since
$\delta^{(q_n)}(q_nxq_n)=q_n\delta^{(q_{n+1})}(q_nxq_n)q_n$, it
follows that
$$[d_n,q_nxq_n]=q_n[c_{n+1},q_nxq_n]q_n=[q_nc_{n+1}q_n,q_nxq_n]$$
for any $x\in\mathcal{M}$. Consequently, the element
$d_n-q_nc_{n+1}q_n$ contained in the centre of algebra
$q_n\mathcal{M}q_n$. By \cite[page 18, corollary]{Dix} there
exists an element $z$ of the centre of algebra
$q_{n+1}\mathcal{M}q_{n+1}$, such that $d_n-q_nc_{n+1}q_n=zq_n$.
Set $d_{n+1}=c_{n+1}+z$. It is clear that
\begin{equation}
\label{t1_2_7}
\delta^{(q_{n+1})}|_{q_{n+1}\mathcal{M}q_{n+1}}(x)=[c_{n+1},x]=[d_{n+1},x]
\end{equation}
for all $x\in q_{n+1}\mathcal{M}q_{n+1}$, in addition,
$$d_{n+1}\in q_{n+1}\mathcal{M}q_{n+1}\ \text{and}\ q_nd_{n+1}q_n=q_nc_{n+1}q_n+zq_n=d_n$$
for every $n\in\mathbb{N}$. Moreover, for $k\in\mathbb{N},\
k<n+1$ the equalities
\begin{equation}
\label{t1_2_8}
q_kd_{n+1}q_k=q_kq_nd_{n+1}q_nq_k=q_kd_nq_k=\dots=q_kd_{k+1}q_k=d_k
\end{equation}
hold.

Thus we constructed the sequence $\{d_n\}$
of elements of $\mathcal{M}$ which has property (\ref{t1_2_6_1}).

By \cite[Prop.8]{CM}, the topology $t(\mathcal{M})$ induces on  $q_nLS(\mathcal{M})q_n=LS(q_n\mathcal{M}q_n)$ the topology $t(q_n\mathcal{M}q_n)$,
and therefore derivation $\delta^{(q_n)}$ is continuous on  $(LS(q_n\mathcal{M}q_n),t(q_n\mathcal{M}q_n))$. By Proposition \ref{p2_2},
we have that $\overline{q_n\mathcal{M}q_n}^{t(q_n\mathcal{M}q_n)}=LS(q_n\mathcal{M}q_n)$. Consequently, the equality $\delta^{(q_n)}(x)=[d_n,x]$ holds for all
 $x\in LS(q_n\mathcal{M}q_n)$.

If $n,m\in\mathbb{N},\ n<m$, then
$$d_m-d_n\stackrel{(\ref{t1_2_6_1})}{=}q_md_mq_m-q_nd_mq_n=(q_m-q_n)d_mq_m+q_nd_m(q_m-q_n).$$
Since
$$r((q_m-q_n)d_mq_m)\sim l((q_m-q_n)d_mq_m)\leq q_n^\bot,$$ it follows that
\begin{equation*}
\begin{split}
 \mathcal{D}(z_n r(d_m-d_n))&\leq \mathcal{D}(z_nr((q_m-q_n)d_mq_m)\vee z_nq_n^\bot)\leq \cr &\leq  2\mathcal{D}(z_nq_n^\bot)\stackrel{(\ref{t1_2_6})}{\leq}
2^{-n+4}\varphi(z_n).
\end{split}
\end{equation*}

From here, in view of (\ref{v2}) and (\ref{t1_2_5}), we obtain
$$d_m-d_n \in V(0,2^{-n},2^{-n+4})\subset V(1/n,2^{-n},2^{-n+4}).$$
It means that $\{d_n\}$ is a Cauchy sequence in
$(LS(\mathcal{M}),t(\mathcal{M}))$, and therefore, since the space
$(LS(\mathcal{M}),t(\mathcal{M}))$ is complete there exists $d\in LS(\mathcal{M})$, such that $d_n\stackrel{t(\mathcal{M})}{\longrightarrow} d$.

Now, let us show that  $\delta(x)=[d,x]$ for all $x\in LS(\mathcal{M})$. By (\ref{t1_2_5}) and (\ref{t1_2_6}) we have that
$q_n^\bot\in V(0,2^{-n},2^{-n+3})$ for all $n\in\mathbb{N}$, and therefore $q_n^\bot\stackrel{t(\mathcal{M})}{\longrightarrow} 0$.
Consequently, $q_n\stackrel{t(\mathcal{M})}{\longrightarrow}\mathbf{1}$ and for every  $x\in LS(\mathcal{M})$ we have that
$q_nxq_n\stackrel{t(\mathcal{M})}{\longrightarrow} x$. We just need to use  $t(\mathcal{M})$-continuity of the derivation $\delta$,
which implies the following
\begin{equation*}
\begin{split}
\delta(x)&= t(\mathcal{M})-\lim_{n\rightarrow\infty} (q_n\delta(q_nxq_n)q_n)=t(\mathcal{M})-\lim_{n\rightarrow\infty}
\delta^{(q_n)}(q_nxq_n) =\cr &= t(\mathcal{M})-\lim_{n\rightarrow\infty} [d_n,q_nxq_n]=[t(\mathcal{M})-\lim_{n\rightarrow\infty} d_n, t(\mathcal{M})-\lim_{n\rightarrow\infty}
q_nxq_n]=\cr &= [d,x].
\end{split}
\end{equation*}
\end{proof}

Theorem \ref{t1} allows to give the full description of all derivations on the algebra $LS(\mathcal{M})$ in case when $\mathcal{M}$ is a properly infinite von Neumann algebra.

\begin{theorem}
\label{t2} If $\mathcal{M}$ is a properly infinite von Neumann algebra, then every derivation on the $*$-algebra $LS(\mathcal{M})$ is inner.
\end{theorem}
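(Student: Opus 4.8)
The plan is to deduce this from Theorem \ref{t1} by establishing that, in the properly infinite case, \emph{every} derivation on $LS(\mathcal{M})$ is automatically continuous with respect to the local measure topology $t(\mathcal{M})$; Theorem \ref{t1} then supplies innerness for free. Thus the whole content of the argument is an automatic-continuity statement, after which the conclusion is immediate.

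First I would recall, as noted in the introduction, that $LS(\mathcal{M})$ is itself an $EW^*$-algebra whose bounded part coincides with $\mathcal{M}$; in fact it is the greatest $EW^*$-algebra with this bounded part. Consequently an arbitrary derivation $\delta\colon LS(\mathcal{M})\rightarrow LS(\mathcal{M})$ is a special case of a derivation $\delta\colon \mathcal{A}\rightarrow LS(\mathcal{M})$ with $\mathcal{A}$ an $EW^*$-subalgebra of $LS(\mathcal{M})$ satisfying $\mathcal{A}_b=\mathcal{M}$, namely the case $\mathcal{A}=LS(\mathcal{M})$.

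Next I would invoke the automatic-continuity theorem of \cite{BCS3}: when $\mathcal{M}$ is properly infinite, any derivation $\delta\colon \mathcal{A}\rightarrow LS(\mathcal{M})$ with $\mathcal{A}$ an $EW^*$-subalgebra and $\mathcal{A}_b=\mathcal{M}$ is necessarily $t(\mathcal{M})$-continuous. Applying this with $\mathcal{A}=LS(\mathcal{M})$ shows that the given $\delta$ is $t(\mathcal{M})$-continuous. With continuity in hand, Theorem \ref{t1} at once yields that $\delta$ is inner, which is exactly the assertion.

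Because the deep analytic work has been externalized to \cite{BCS3} (continuity) and to Theorem \ref{t1} (innerness of continuous derivations), there is no serious obstacle internal to this proof; the only point requiring care is the routine identification of $LS(\mathcal{M})$ as an admissible $\mathcal{A}$ in the hypotheses of the \cite{BCS3} result, which is immediate from the $EW^*$-structure of $LS(\mathcal{M})$. Were one to refrain from quoting \cite{BCS3} as a black box, the genuinely hard part would be reconstructing that automatic-continuity argument, which exploits the abundance of pairwise orthogonal equivalent projections available in a properly infinite algebra to rule out any discontinuity of $\delta$.
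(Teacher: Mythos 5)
Your proposal is correct and follows exactly the paper's own argument: cite the automatic-continuity result of \cite{BCS3} (Theorem 3.3 there) for properly infinite $\mathcal{M}$ to conclude that any derivation on $LS(\mathcal{M})$ is $t(\mathcal{M})$-continuous, then apply Theorem \ref{t1}. The extra remark identifying $LS(\mathcal{M})$ as the greatest $EW^*$-algebra with bounded part $\mathcal{M}$ is harmless but not needed, since the cited theorem applies directly to derivations on $LS(\mathcal{M})$.
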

\begin{proof}
By \cite[Theorem 3.3]{BCS3} for properly infinite von Neumann algebra $\mathcal{M}$ every derivation
$\delta: LS(\mathcal{M})\rightarrow LS(\mathcal{M})$ is $t(\mathcal{M})$-continuous. Consequently, by Theorem \ref{t1},
there exists  $d\in LS(\mathcal{M})$, such that  $\delta(x)=[d,x]$ for all $x\in LS(\mathcal{M})$.
\end{proof}

\section{Derivations on $EW^*$-algebras}
In this section we give applications of Theorems \ref{t1} and \ref{t2} to the description of continuous derivations on $EW^*$-algebras. The class of $EW^*$-algebras (extended $W^*$-algebras) was introduced in \cite{Dixon1} for the purpose of description of $*$-algebras of unbounded closed operators, which are "similar" to $W^*$-algebras by their algebraic and order properties.

Let $\mathcal{A}$ be a set of closed, densely defined operators on
the Hilbert space $H$ which is a $*$-algebra under strong sum,
strong product, scalar multiplication and the usual adjoint of
operators. The set $\mathcal{A}$ is said to be $EW^*$-algebra
\cite{Dixon1} if the following conditions hold:

(i) $(\mathbf{1}+x^*x)^{-1}\in\mathcal{A}$ for every
$x\in\mathcal{A}$;

(ii) the subalgebra $\mathcal{A}_b$ of bounded operators in $\mathcal{A}$ is a $W^*$-algebra.

In \cite{CZ} it is given the meaningful connection between $EW^*$-algebras $\mathcal{A}$ and solid subalgebras of $LS(\mathcal{A}_b)$.
Recall \cite{BdPS}, that a $*$-subalgebra $\mathcal{A}$ of $LS(\mathcal{M})$ is called solid if conditions
$x\in LS(\mathcal{M}),\ y\in\mathcal{A},\ |x|\leq |y|$ imply that $x\in\mathcal{A}$. It is clear that every solid $*$-subalgebra $\mathcal{A}$ in $LS(\mathcal{M})$ with $\mathcal{M}\subset\mathcal{A}$ is an $EW^*$-algebra and $\mathcal{A}_b=\mathcal{M}$. At the same time, in \cite{CZ} it is established that every $EW^*$-algebra $\mathcal{A}$ with the bounded part $\mathcal{A}_b=\mathcal{M}$ is a solid
$*$-subalgebra in the $*$-algebra $LS(\mathcal{M})$, i.e.
$LS(\mathcal{M})$ is the greatest $EW^*$-algebra of all
$EW^*$-algebras with the bounded part coinciding with
$\mathcal{M}$.

Since every $EW^*$-algebra $\mathcal{A}$ with the bounded part $\mathcal{A}_b$ is a solid $*$-subalgebra in $LS(\mathcal{A}_b)$ and $\mathcal{A}_b\subset \mathcal{A}$, according to \cite{BCS3}, it follows that in the case when $\mathcal{A}_b$ is a properly infinite $W^*$-algebra any derivation $\delta\colon\mathcal{A}\to LS(\mathcal{A}_b)$ is continuous with respect to the local measure topology $t(\mathcal{A}_b)$.

Now, let $\mathcal{A}_b$ be arbitrary $W^*$-algebra and let
$\delta\colon\mathcal{A}\to\mathcal{A}$ be a
$t(\mathcal{A}_b)$-continuous derivation. Since
$\mathcal{A}_b\subset\mathcal{A}$, $\mathcal{A}_b$ is everywhere
dense in \\ $(LS(\mathcal{A}_b),t(\mathcal{A}_b))$ (Proposition
\ref{p2_2}) and $(LS(\mathcal{A}_b),t(\mathcal{A}_b))$ is a
topological $*$-algebra, there exists a unique
$t(\mathcal{A}_b)$-continuous derivation $\hat{\delta}\colon
LS(\mathcal{A}_b)\to LS(\mathcal{A}_b)$ such that
$\hat{\delta}(x)=\delta(x)$ for all $x\in\mathcal{A}$. By Theorem
\ref{t1} the derivation $\hat{\delta}$ is inner. In
\cite[Proposition 5.13]{BdPS} it is proved that, if $\delta$ is a
derivation on a solid $*$-subalgebra
$\mathcal{A}\supset\mathcal{M}$ and $\delta(x)=[w,x]$ for all
$x\in\mathcal{A}$ and some $w\in LS(\mathcal{M})$, then there
exists $w_1\in\mathcal{A}$, such that $\delta(x)=[w_1,x]$ for all
$x\in\mathcal{A}$, i.e. the derivation $\delta$ is inner on the
$*$-subalgebra $\mathcal{A}$.

Thus the following theorem holds.

\begin{theorem}\label{t_EW}
(i) Every $t(\mathcal{A}_b)$-continuous derivation on a $EW^*$-algebra $\mathcal{A}$ is inner;

(ii) If the bounded part $\mathcal{A}_b$ in an $EW^*$-algebra
$\mathcal{A}$ is a properly infinite $W^*$-algebra, then every
derivation on $\mathcal{A}$ is inner.
\end{theorem}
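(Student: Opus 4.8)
The plan is to deduce both parts from Theorem~\ref{t1} by transporting the problem from $\mathcal{A}$ to the ambient algebra $LS(\mathcal{A}_b)$. The key structural input is the description in \cite{CZ}: an $EW^*$-algebra $\mathcal{A}$ with $\mathcal{A}_b=\mathcal{M}$ is precisely a solid $*$-subalgebra of $LS(\mathcal{M})$ containing $\mathcal{M}$. This identification makes the whole reduction possible, and in fact essentially all the ingredients have already been assembled in the paragraph preceding the statement; the proof is primarily a matter of organizing them.

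For part (i), I would first extend the given $t(\mathcal{A}_b)$-continuous derivation $\delta\colon\mathcal{A}\to\mathcal{A}\subset LS(\mathcal{A}_b)$ to all of $LS(\mathcal{A}_b)$. Since $\mathcal{A}\supseteq\mathcal{A}_b$ and $\mathcal{A}_b$ is $t(\mathcal{A}_b)$-dense in $LS(\mathcal{A}_b)$ by Proposition~\ref{p2_2}, the subalgebra $\mathcal{A}$ is itself dense; because $(LS(\mathcal{A}_b),t(\mathcal{A}_b))$ is a complete topological $*$-algebra, the continuous linear map $\delta$ admits a unique continuous linear extension $\hat\delta\colon LS(\mathcal{A}_b)\to LS(\mathcal{A}_b)$. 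Approximating arbitrary $x,y\in LS(\mathcal{A}_b)$ by nets in $\mathcal{A}$ and passing to the limit in the Leibniz identity $\delta(x_\alpha y_\alpha)=\delta(x_\alpha)y_\alpha+x_\alpha\delta(y_\alpha)$, using the joint continuity of the product together with the continuity of $\hat\delta$, shows that $\hat\delta$ is again a derivation. Theorem~\ref{t1} then yields $w\in LS(\mathcal{A}_b)$ with $\hat\delta(x)=[w,x]$ for all $x$, hence $\delta(x)=[w,x]$ for all $x\in\mathcal{A}$. Finally, invoking \cite[Proposition~5.13]{BdPS}, which upgrades an implementing element lying in $LS(\mathcal{M})$ to one lying in the solid subalgebra $\mathcal{A}$ itself, produces $w_1\in\mathcal{A}$ with $\delta=\delta_{w_1}$, so $\delta$ is inner.

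For part (ii), I would remove the continuity hypothesis by appealing to automatic continuity: when $\mathcal{A}_b$ is properly infinite, \cite[Theorem~3.3]{BCS3} guarantees that every derivation $\delta\colon\mathcal{A}\to LS(\mathcal{A}_b)$ is $t(\mathcal{A}_b)$-continuous. Thus any derivation $\delta\colon\mathcal{A}\to\mathcal{A}$ is automatically $t(\mathcal{A}_b)$-continuous, and part (i) applies verbatim. The main obstacle in this chain is the extension step of part (i): one must confirm both that the continuous linear extension $\hat\delta$ exists (where completeness of $LS(\mathcal{A}_b)$ and density of $\mathcal{A}$ are used) and that it remains a derivation after passing to the limit, for which the joint continuity of multiplication in $(LS(\mathcal{A}_b),t(\mathcal{A}_b))$ is essential. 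The remaining steps are direct invocations of Theorem~\ref{t1} and of the cited results.
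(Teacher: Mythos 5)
Your proposal is correct and follows essentially the same route as the paper: extend $\delta$ to $LS(\mathcal{A}_b)$ by density (Proposition~\ref{p2_2}) and completeness, apply Theorem~\ref{t1}, use \cite[Proposition 5.13]{BdPS} to move the implementing element into $\mathcal{A}$, and invoke the automatic-continuity result of \cite{BCS3} for the properly infinite case. The paper presents these same steps (in the discussion preceding the theorem) in the same order, so there is nothing to add.
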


%\begin{corollary}
%\label{t3} If $\mathcal{M}$ is a properly infinite von Neumann
%algebra and $\mathcal{A}$ is an absolutely solid $*$-subalgebra of
%$LS(\mathcal{M})$, $\mathcal{M}\subset\mathcal{A}$, then all
%derivations on $\mathcal{A}$  are inner.
%\end{corollary}
%
%In particular, Corollary \ref{t3} implies that for properly infinite von Neumann algebra $\mathcal{M}$
%any derivation $\delta:S(\mathcal{M})\longrightarrow S(\mathcal{M})$ is inner.

Let $\mathcal{M}$ be a semifinite von Neumann algebra acting on the
Hilbert space $H$, $\tau$ be a faithful normal semifinite trace on
$\mathcal{M}$. An operator $x\in S(\mathcal{M})$ with the domain
$\mathfrak{D}(x)$ is called \emph{$\tau$-measurable} if for any
$\varepsilon>0$ there exists a projection
$p\in\mathcal{P}(\mathcal{M})$ such that $p(H)\subset
\mathfrak{D}(x)$ and $\tau(p^\bot)<\varepsilon$.

The set $S(\mathcal{M},\tau)$ of all $\tau$-measurable operators
is a solid  $*$-subalgebra of $LS(\mathcal{M})$ such
that $\mathcal{M}\subset S(\mathcal{M},\tau)\subset
S(\mathcal{M})$. If the trace $\tau$ is finite, then
$S(\mathcal{M},\tau)=S(\mathcal{M})$. The algebra
$S(\mathcal{M},\tau)$ is a noncommutative version of the algebra
of all measurable complex functions $f$ defined on
$(\Omega,\Sigma,\mu)$, for which $\mu(\{|f|>\lambda\})\rightarrow
0$ as $\lambda\rightarrow\infty$.

Let $t_\tau$ be the \textit{measure topology}~\cite{Ne} on
$S(\mathcal{M},\tau)$ whose base of neighborhoods of zero is given
by
\begin{gather*}
  U(\varepsilon,\delta)=\{x\in S(\mathcal{M},\tau): \ \
  \hbox{there exists a projection} \ \
  p\in \mathcal{P}(\mathcal{M}),
\\
  \hbox{such that}\ \tau(p^\bot)\leq \delta,
  \ xp \in \mathcal{M}, \ \ \|xp\|_\mathcal{M}\leq\varepsilon\}, \ \ \varepsilon>0, \ \delta>0.
\end{gather*}

The pair $(S(\mathcal{M},\tau),t_\tau)$ is a complete metrizable
topological $*$-algebra. Here, the topology $t_\tau$ majorizes the
topology $t(\mathcal{M})$ on $S(\mathcal{M},\tau)$ and, if $\tau$
is a finite trace, the topologies $t_\tau$ and $t(\mathcal{M})$
coincide~\cite[\S\S\,3.4, 3.5]{MCh}. Denote by
$t(\mathcal{M},\tau)$ the topology on $S(\mathcal{M},\tau)$
induced by the topology $t(\mathcal{M})$. It is not true in
general that, if the topologies $t_\tau$ and $t(\mathcal{M},\tau)$
are the same, then the von Neumann algebra $\mathcal{M}$ is
finite. Indeed, if $\mathcal{M}=B(H), \ \dim(H)=\infty$, $\tau=tr$
is the canonical trace on $B(H)$, then
$LS(\mathcal{M})=S(\mathcal{M})=S(\mathcal{M},\tau)=\mathcal{M}$,
and the two topologies $t_\tau$ and $t(\mathcal{M})$ coincide with
the uniform topology on $B(H)$.

At the same time, if $\mathcal{M}$ is a finite von Neumann algebra
with a faithful normal semifinite trace $\tau$ and $t_\tau =
t(\mathcal{M},  \tau)$, then $\tau(I)<\infty$ \cite{CM}.

In \cite{Ber2} it is proven that every $t_\tau$-continuous derivation on $S(\mathcal{M},\tau)$ is inner. In addition, in \cite{Ber} it is established that in the case of properly infinite von Neumann
algebra $\mathcal{M}$ every derivation on $S(\mathcal{M},\tau)$ is
$t_\tau$-continuous. Thus, in view if Theorem \ref{t_EW} we obtain the following

\begin{corollary}
\label{cor_smt} Let $\mathcal{M}$ be a semifinite von Neumann
algebra, let $\tau$ be a faithful normal semifinite trace on
$\mathcal{M}$, let $\delta$ be a derivation on
$S(\mathcal{M},\tau)$. Then the following conditions are equivalent:

(i). $\delta$ is $t(\mathcal{M})$-continuous;

(ii). $\delta$ is $t_\tau$-continuous;

(iii). $\delta$ is inner.

In addition, if $\mathcal{M}$ is a properly infinite von Neumann
algebra then every derivation on $S(\mathcal{M},\tau)$ is inner.
\end{corollary}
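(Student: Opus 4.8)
The plan is to prove the three-way equivalence by routing every implication through condition (iii), i.e. by treating innerness as the common bridge between the two a priori unrelated continuity properties. Concretely, I would first verify the two easy implications $(iii)\Rightarrow(i)$ and $(iii)\Rightarrow(ii)$ using only that both $(LS(\mathcal{M}),t(\mathcal{M}))$ and $(S(\mathcal{M},\tau),t_\tau)$ are topological $*$-algebras, and then close the loop with $(i)\Rightarrow(iii)$ and $(ii)\Rightarrow(iii)$ by invoking the already available innerness theorems.

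For $(iii)\Rightarrow(i)$: if $\delta=\delta_d$ for some $d\in S(\mathcal{M},\tau)\subset LS(\mathcal{M})$, then $\delta$ is the restriction to $S(\mathcal{M},\tau)$ of the inner derivation $[d,\cdot]$ on $LS(\mathcal{M})$, which, as noted at the start of Section~4, is $t(\mathcal{M})$-continuous because multiplication is $t(\mathcal{M})$-continuous; hence $\delta$ is continuous for the induced topology $t(\mathcal{M},\tau)$. For $(iii)\Rightarrow(ii)$: since $(S(\mathcal{M},\tau),t_\tau)$ is a topological $*$-algebra, the map $x\mapsto dx-xd$ is $t_\tau$-continuous, so $\delta$ is $t_\tau$-continuous.

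For the converse directions I would first identify $S(\mathcal{M},\tau)$ as an $EW^*$-algebra with bounded part $\mathcal{A}_b=\mathcal{M}$, since it is a solid $*$-subalgebra of $LS(\mathcal{M})$ containing $\mathcal{M}$. Then $(i)\Rightarrow(iii)$ is exactly Theorem~\ref{t_EW}(i): a $t(\mathcal{M})$-continuous derivation on such an algebra is inner (internally this extends $\delta$ to a $t(\mathcal{M})$-continuous derivation on $LS(\mathcal{M})$, inner by Theorem~\ref{t1}, and then pulls the implementing element back into $S(\mathcal{M},\tau)$ via \cite[Proposition~5.13]{BdPS}). The implication $(ii)\Rightarrow(iii)$ is precisely the result of \cite{Ber2}, that every $t_\tau$-continuous derivation on $S(\mathcal{M},\tau)$ is inner. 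Combining the four implications yields $(i)\Leftrightarrow(ii)\Leftrightarrow(iii)$. For the final assertion, when $\mathcal{M}$ is properly infinite I would apply Theorem~\ref{t_EW}(ii) directly, or equivalently invoke \cite{Ber} to conclude that every derivation on $S(\mathcal{M},\tau)$ is $t_\tau$-continuous, hence satisfies (ii), hence is inner.

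The content of the corollary is therefore essentially bookkeeping on top of Theorems~\ref{t1}, \ref{t_EW} and the cited results \cite{Ber,Ber2}. The one genuine subtlety, and the place I would be most careful, is that $t_\tau$ and $t(\mathcal{M},\tau)$ cannot be compared directly for the purpose of transferring continuity of a self-map: although $t_\tau$ majorizes $t(\mathcal{M},\tau)$, a map continuous for the coarser topology need not be continuous for the finer one, and conversely, so there is no shortcut from $(i)$ to $(ii)$ by a topological comparison alone. This is exactly why both converse directions must pass through innerness rather than being deduced from one another.
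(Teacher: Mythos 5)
Your proposal is correct and follows essentially the same route as the paper: the paper also obtains the equivalence by passing every implication through innerness, using the topological $*$-algebra property of $(LS(\mathcal{M}),t(\mathcal{M}))$ and $(S(\mathcal{M},\tau),t_\tau)$ for $(iii)\Rightarrow(i),(ii)$, Theorem~\ref{t_EW}(i) (i.e.\ viewing $S(\mathcal{M},\tau)$ as an $EW^*$-algebra with bounded part $\mathcal{M}$, extending to $LS(\mathcal{M})$, applying Theorem~\ref{t1}, and pulling the implementing element back via \cite[Proposition~5.13]{BdPS}) for $(i)\Rightarrow(iii)$, the result of \cite{Ber2} for $(ii)\Rightarrow(iii)$, and \cite{Ber} together with $(ii)\Rightarrow(iii)$ (equivalently Theorem~\ref{t_EW}(ii)) for the properly infinite case. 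Your closing remark that $t_\tau$ and $t(\mathcal{M},\tau)$ cannot be compared directly to transfer continuity of a self-map, so that both converse directions must go through innerness, is also consistent with the paper's treatment.
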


\section{Automatic innerness of derivations on Banach $\mathcal{M}$-bimodule of locally measurable operators}

In this section we give one more application of Theorem \ref{t1} establishing innerness of every derivation on a Banach
$\mathcal{M}$-bimodule of locally measurable operators.

Let $\mathcal{M}$ be a von Neumann algebra. A linear subspace
$\mathcal{E}$ of $LS(\mathcal{M})$, is called a
$\mathcal{M}$-bimodule of locally measurable operators if $uxv\in
\mathcal{E}$ whenever $x\in \mathcal{E}$ and $u,v\in \mathcal{M}$.
If $\mathcal{E}$ is a $\mathcal{M}$-bimodule of locally measurable
operators, $x\in\mathcal{E}$ and $x=v|x|$ is the polar
decomposition of operator $x$ then
$|x|=v^*x\in\mathcal{\mathcal{E}}$ and $x^*=|x|v^*\in\mathcal{E}$.
In addition,
\begin{equation}
\label{e_solid}\text{if}\ |a|\leq |b|,\ b\in\mathcal{E},\ a\in
LS(\mathcal{M})\ \text{then}\ a\in\mathcal{E}.
\end{equation}
Property (\ref{e_solid}) of a $\mathcal{M}$-bimodule of locally
measurable operators follows from the following proposition.

\begin{proposition}
\label{mchext} Let $\mathcal{M}$ be a von Neumann algebra acting in a Hilbert space $H$, $a,b\in LS(\mathcal{M}),\ 0\leq a\leq b$. Then $a^{1/2}=cb^{1/2}$ for some $c\in s(b)\mathcal{M}s(b),\ \|c\|_{\mathcal{M}}\leq 1$, in particular, $a=cbc^*$. In addition, if $c_1\in\mathcal{M}$ and \
$a^{1/2}=c_1b^{1/2}$, then $s(b)\cdot c_1\cdot s(b)=c$.
\end{proposition}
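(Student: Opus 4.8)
The plan is to prove a Douglas-type factorization for locally measurable operators. Writing $a=a^{1/2}a^{1/2}$ and $b=b^{1/2}b^{1/2}$ with $a^{1/2},b^{1/2}\in LS_+(\mathcal{M})$, the identity $a^{1/2}=cb^{1/2}$ is exactly the statement that the assignment $b^{1/2}\xi\mapsto a^{1/2}\xi$ extends to a contraction $c$ sitting in the corner $s(b)\mathcal{M}s(b)$. The analytically decisive first step is therefore to extract from $0\le a\le b$ the form comparison $\mathfrak{D}(b^{1/2})\subseteq\mathfrak{D}(a^{1/2})$ together with $\|a^{1/2}\xi\|\le\|b^{1/2}\xi\|$ for every $\xi\in\mathfrak{D}(b^{1/2})$. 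I would deduce this from resolvent monotonicity: the cone order $a\le b$ yields $(a+\varepsilon\mathbf{1})^{-1}\ge(b+\varepsilon\mathbf{1})^{-1}$ in $\mathcal{M}$ for each $\varepsilon>0$, and the quadratic-form inequality and domain inclusion then follow by a standard limiting argument. Along the way I record that $\langle a\xi,\xi\rangle\le\langle b\xi,\xi\rangle$ forces $\ker b\subseteq\ker a$, i.e. $s(a)\le s(b)$.

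With this in hand I construct $c$ explicitly. On $\mathrm{ran}(b^{1/2})$ set $c_0(b^{1/2}\xi):=a^{1/2}\xi$; the norm inequality shows $c_0$ is well defined (if $b^{1/2}\xi=0$ then $a^{1/2}\xi=0$) and contractive, so it extends to a contraction on $\overline{\mathrm{ran}(b^{1/2})}=s(b)(H)$, which I continue by $0$ on $s(b)^\bot(H)$ to obtain $c$ with $\|c\|_{\mathcal{M}}\le 1$. Since $\mathrm{ran}(c)\subseteq\overline{\mathrm{ran}(a^{1/2})}=s(a)(H)\subseteq s(b)(H)$ and $c$ annihilates $s(b)^\bot(H)$, one gets $c=s(b)cs(b)$. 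That $c\in\mathcal{M}$ I verify by commutation with the commutant: for a unitary $u'\in\mathcal{M}'$ the affiliation of $a^{1/2},b^{1/2}$ and the relation $u's(b)=s(b)u'$ give $cu'=u'c$, whence $c\in(\mathcal{M}')'=\mathcal{M}$. Equivalently, $c$ may be produced as a weak$^{*}$-limit of the bounded elements $a^{1/2}(b+\varepsilon\mathbf{1})^{-1/2}\in\mathcal{M}$ as $\varepsilon\downarrow 0$.

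It remains to identify $a^{1/2}$ with the strong product $c\cdot b^{1/2}$ in $LS(\mathcal{M})$. By construction $cb^{1/2}\xi=a^{1/2}\xi$ for all $\xi\in\mathfrak{D}(b^{1/2})$, so $cb^{1/2}\subseteq a^{1/2}$ and hence $\overline{cb^{1/2}}\subseteq a^{1/2}$. I expect \emph{this} passage, from agreement on the dense domain $\mathfrak{D}(b^{1/2})$ to genuine equality of closed affiliated operators, to be the main obstacle, since $\mathfrak{D}(b^{1/2})$ is in general strictly smaller than $\mathfrak{D}(a^{1/2})$. I would close the gap by showing that $\mathfrak{D}(b^{1/2})$ is a core for $a^{1/2}$, truncating by the spectral projections $f_n$ of $b$ for the intervals $[0,n]$ (which lie in $\mathcal{M}$, satisfy $f_n\uparrow s(b)$, and approximate in the graph norm controlled by $\|a^{1/2}\,\cdot\,\|\le\|b^{1/2}\,\cdot\,\|$), or alternatively by checking that $\overline{cb^{1/2}}$ is self-adjoint and positive with $(\overline{cb^{1/2}})^{2}=cbc^*=a$, which forces $\overline{cb^{1/2}}=a^{1/2}$. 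Once $a^{1/2}=cb^{1/2}$ is secured, taking the adjoint gives $b^{1/2}c^*=a^{1/2}$, and the strong-product computation $cbc^*=(cb^{1/2})(b^{1/2}c^*)=a^{1/2}a^{1/2}=a$ yields $a=cbc^*$.

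For the uniqueness clause, suppose $c_1\in\mathcal{M}$ also satisfies $a^{1/2}=c_1b^{1/2}$. Then $(c_1-c)b^{1/2}=0$ in $LS(\mathcal{M})$, so the bounded operator $c_1-c$ vanishes on $\mathrm{ran}(b^{1/2})$ and hence on its closure $s(b)(H)$; that is, $(c_1-c)s(b)=0$, i.e. $c_1s(b)=cs(b)$. Multiplying on the left by $s(b)$ and using $c=s(b)cs(b)$ gives $s(b)\cdot c_1\cdot s(b)=s(b)cs(b)=c$, as required.
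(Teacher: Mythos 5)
Your overall strategy is the same Douglas-type factorization the paper uses, and much of it is sound: the well-definedness and contractivity of $c_0$ on $\mathrm{ran}(b^{1/2})$, the membership $c\in\mathcal{M}$ via commutation with the commutant, the corner relation $c=s(b)cs(b)$, and the uniqueness argument all match the paper's proof. The genuine gap is exactly at the step you yourself flag as the main obstacle: passing from $\overline{cb^{1/2}}\subseteq a^{1/2}$ to equality. Neither of your proposed repairs works. The truncation argument fails because for $\xi\in\mathfrak{D}(a^{1/2})$ the difference $\xi-f_n\xi$ need not lie in $\mathfrak{D}(b^{1/2})$, so the inequality $\|a^{1/2}\,\cdot\,\|\le\|b^{1/2}\,\cdot\,\|$ gives no control on $\|a^{1/2}(\xi-f_n\xi)\|$; worse, the general principle you are implicitly invoking --- that form domination $a\le b$ makes $\mathfrak{D}(b^{1/2})$ a core for $a^{1/2}$ --- is false for positive self-adjoint operators on a Hilbert space: take $a$ the Neumann and $b$ the Dirichlet Laplacian on $L^2(0,1)$; then $a\le b$ in the form sense, but $\mathfrak{D}(b^{1/2})=H^1_0(0,1)$ is a proper closed subspace of $\mathfrak{D}(a^{1/2})=H^1(0,1)$ in the form norm, hence not a core. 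Your second alternative is circular: $\overline{cb^{1/2}}$ is a closed symmetric restriction of the self-adjoint operator $a^{1/2}$, so its self-adjointness is equivalent to the equality you want, and you give no independent way of verifying it (nor of justifying $(\overline{cb^{1/2}})^{2}=cbc^*$, which involves precisely the same closure manipulations).

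What closes the gap is the von Neumann algebra structure, which your argument does not use at this point. Either invoke the fact that a locally measurable operator admits no proper closed extension that is again locally measurable (so $\overline{cb^{1/2}},\,a^{1/2}\in LS(\mathcal{M})$ together with $\overline{cb^{1/2}}\subseteq a^{1/2}$ force equality), or do what the paper does: work with $H_0=\bigcup_n p_nH$, where $p_n$ is the spectral projection of $b$ for the interval $[1/n,n]$ (not $[0,n]$); then $b^{1/2}$ maps $H_0$ onto itself, one gets $cb^{1/2}p_n=a^{1/2}p_n$ for every $n$, and the compression lemma \cite[Prop. 2.4.1(ix)]{MCh} applied to $p_n\uparrow\mathbf{1}$ yields $cb^{1/2}=a^{1/2}$ in $LS(\mathcal{M})$. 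A smaller remark of the same nature: your very first step, resolvent monotonicity, also needs an argument here, since $a\le b$ is the algebraic order in $LS(\mathcal{M})$ (i.e.\ $b-a\in LS_+(\mathcal{M})$), not a quadratic-form inequality; it does hold (compress by $(b+\varepsilon\mathbf{1})^{-1/2}$ and use that a positive element of $LS(\mathcal{M})$ dominated by $\mathbf{1}$ lies in $\mathcal{M}$), but it is not quotable as standard, whereas the paper gets everything it needs from the cruder algebraic estimates $p_nap_n\le p_nbp_n\le np_n$.
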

\begin{proof}
Let us show firstly that $s(a)\leq s(b)$. Since
$$0\leq(\mathbf{1}-s(b))a(\mathbf{1}-s(b))\leq
(\mathbf{1}-s(b))b(\mathbf{1}-s(b))=0,$$ if follows that
$(\mathbf{1}-s(b))a^{1/2}=0$, that implies the equality
$(\mathbf{1}-s(b))a=0$, i.e. $s(b)a=a=a^*=a^*s(b)=as(b)$.
Consequently, $s(a)\leq s(b)$.

Thus, passing if necessary to the reduction $s(b)\mathcal{M}s(b)$
we may assume that $s(b)=\mathbf{1}$.

For every $n\in\mathbb{N}$ denote by $p_n$ the spectral projection for the operator $b$ corresponding to the interval $[1/n,n]$. Since
$p_n\uparrow s(b)=\mathbf{1}$ it follows that the linear subspace
$H_0=\bigcup_{n=1}^\infty p_n H$ is dense in $H$ and $H_0\subset
\mathfrak{D}(b)\cap\mathfrak{D}(b^{1/2})$. Furthermore, according to the inequalities $0\leq p_nap_n\leq p_nbp_n\leq np_n$ we have that
$a^{1/2}p_n\in\mathcal{M}$ and $\|a^{1/2}p_n\|_{\mathcal{M}}\leq
\sqrt{n}$ for all $n\in\mathbb{N}$. In particular, $H_0\subset
\mathfrak{D}(a^{1/2})$.

Since $b^{1/2}p_n\leq n^{1/2}p_n$ and
$b^{1/2}(p_nH)=p_nb^{1/2}(p_nH)\subset p_nH$ for all
$n\in\mathbb{N}$ we have $b^{1/2}(H_0)\subset H_0$. Consequently, it is possible to define a linear mapping $d:b^{1/2}(H_0)\rightarrow H$ by setting
$d(b^{1/2}\xi)=a^{1/2}\xi,\ \xi\in H_0$. The definition of the operator $d$ is correct since the equality $b^{1/2}\xi=0$ and inequality
$$\|a^{1/2}\xi\|_{H}^2=(a^{1/2}\xi,a^{1/2}\xi)=(a\xi,\xi)\leq
(b\xi,\xi)=\|b^{1/2}\xi\|_{H}^2$$ imply that $a^{1/2}\xi=0$.

In addition, for every $\xi\in H_0$ we have
$$\|d(b^{1/2}\xi)\|_{H}^2=\|a^{1/2}\xi\|_{H}^2\leq
\|b^{1/2}\xi\|_{H}^2,$$ i.e. $d$ is a continuous linear operator on $b^{1/2}(H_0)$ and  $\|d\|_{b^{1/2}(H_0)\rightarrow
H}\leq 1$.

Since $n^{-1}p_n\leq bp_n\leq np_n$, by Proposition \cite[Теорема 2.4.2]{MCh} we have $n^{-1/2}p_n\leq b^{1/2}p_n\leq n^{1/2}p_n$. Therefore the restriction of operator $b^{1/2}$ to $p_n(H_0)$ has inverse bounded operator $b_n$, in addition $n^{-1/2}p_n\leq b_np_n\leq n^{1/2}p_n$. Hence, $b^{1/2}(p_nH)=p_nH$, that implies the equality $b^{1/2}(H_0)=H_0$.

Thus, the operator $d$ uniquely extends to the Hilbert space $H$ up to a bounded linear operator $c$, moreover, $\|c\|_{B(H)}\leq 1$ and $cb^{1/2}\xi=a^{1/2}\xi$ for all $\xi\in H_0$.

If $u$ as a unitary operator from the commutant $\mathcal{M}'$, then $u(p_nH)=p_nH$ for all $n\in\mathbb{N}$ and therefore $u(H_0)=H_0$. If $\eta\in H_0$, then $\eta=b^{1/2}\xi$ for some $\xi\in H_0$ and
\begin{gather*}
\begin{split}
u^{-1}cu\eta&=u^{-1}cub^{1/2}\xi=u^{-1}cb^{1/2}u\xi\\ &=u^{-1}a^{1/2}u\xi=u^{-1}ua^{1/2}\xi=a^{1/2}\xi=cb^{1/2}\xi=c\eta.
\end{split}
\end{gather*}
Consequently, $u^{-1}cu=c$, that implies the inclusion $c\in\mathcal{M}$.

Since $p_ncb^{1/2}p_n=p_na^{1/2}p_n$ for all $n\in\mathbb{N}$ and $p_n\uparrow\mathbf{1}$, by Proposition \cite[Proposition 2.4.1 (ix)]{MCh} we have
 $cb^{1/2}=a^{1/2}$.

If $c_1\in\mathcal{M}$ and $c_1b^{1/2}=a^{1/2}$, then the operators $c_1$ and $c$ coincide on the everywhere dense subspace $H_0$ and therefore $c_1=c$.

If $s(b)\neq \mathbf{1},\ c_1\in\mathcal{M}$ and $c_1b^{1/2}=a^{1/2}$, then, using inequalities $$a^{1/2}s(b)=s(b)a^{1/2}=a^{1/2}$$ and $$b^{1/2}s(b)=s(b)b^{1/2}=b^{1/2},$$ we obtain $(s(b)c_1s(b))b^{1/2}=a^{1/2}$. Uniqueness of the operator $c$ in reduction $s(b)\mathcal{M}s(b)$ implies that $s(b)\cdot c_1\cdot s(b)=c$.
\end{proof}

Let $\mathcal{E}$ be a $\mathcal{M}$-bimodule of locally measurable operators.
A linear mapping  $\delta:\mathcal{M}\rightarrow \mathcal{E}$ is called \emph{derivation}, if
$\delta(ab)=\delta(a)b+a\delta(b)$ for all $a,b\in\mathcal{M}$. A derivation $\delta:\mathcal{M}\longrightarrow \mathcal{E}$ is called
\emph{inner}, if there exists an element $d\in\mathcal{E}$, such that $\delta(x)=[d,x]=dx-xd$ for all $x\in\mathcal{M}$.

We need the following

\begin{theorem} \cite[Theorem 1]{BS}
\label{t_bs} Let $\mathcal{M}$ be a von Neumann algebra and $a\in
LS_h(\mathcal{M})$. Then there exist a self-adjoint operator
$c$ in the centre of the $*$-algebra $LS(\mathcal{M})$ and a family
$\{u_\varepsilon\}_{\varepsilon>0}$ of unitary operators from
$\mathcal{M}$ such that
\begin{equation}
\label{e_bs}
|[a,u_\varepsilon]|\geq (1-\varepsilon)|a-c|.
\end{equation}
\end{theorem}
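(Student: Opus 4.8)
The plan is to use the elementary identity $|[a,u]|=|u^{*}au-a|$, valid for every unitary $u\in\mathcal{M}$ since $u$ is an isometry, so that the statement becomes: produce a central self-adjoint $c$ and unitary conjugates $b_{\varepsilon}=u_{\varepsilon}^{*}au_{\varepsilon}$ of $a$ with $|b_{\varepsilon}-a|\geq(1-\varepsilon)|a-c|$. Morally I want each $u_{\varepsilon}$ to \emph{reflect} the spectrum of $a$ about a central level $c$: if I could realise $u^{*}au=2c-a$ exactly then, $c$ being central, $u^{*}au-a=2(c-a)$ would commute with $a-c$ and yield $|[a,u]|=2|a-c|$. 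Such an exact reflection needs $a-c\sim-(a-c)$, which fails in general, so the family $\{u_{\varepsilon}\}$ will implement it only approximately, and the loss in this approximation is what degrades the clean factor to $(1-\varepsilon)$. Throughout I would pass to a central decomposition of $\mathbf{1}$ and reassemble $c$ and the $u_{\varepsilon}$ from their central components via Proposition \ref{p1}, splitting off the finite and properly infinite parts of $\mathcal{M}$ and reducing the comparison questions below to the fibrewise (factor) situation, where the Murray--von Neumann comparison theorem applies.

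First I would construct $c$ as a \emph{central median} of $a$. Put $e_{+}=E^{\bot}_{0}(a-c)$, let $e_{-}$ be the spectral projection of $a-c$ on $(-\infty,0)$, and let $e_{0}=E_{0}(a-c)-e_{-}$ be the projection onto $\{a=c\}$; all three lie in $\mathcal{P}(\mathcal{M})$ and commute with $a$. I want to choose the self-adjoint $c\in\mathcal{Z}(LS(\mathcal{M}))=S(\mathcal{Z}(\mathcal{M}))$ so that $e_{+}\preceq e_{-}\vee e_{0}$ and $e_{-}\preceq e_{+}\vee e_{0}$. To do so I would examine the dimension-function-valued quantity $\mathcal{D}(e_{+})-\mathcal{D}(e_{-})$, which decreases from $\mathcal{D}(\mathbf{1})$ to $-\mathcal{D}(\mathbf{1})$ as the central level $c$ increases and whose jumps are controlled by $\mathcal{D}(e_{0})$; by order completeness of $\mathcal{Z}(\mathcal{M})$ and property (D5) it has a central zero-crossing yielding exactly the two desired subequivalences. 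This is one of the two main hurdles: $c$ must be produced as a genuine element of $S(\mathcal{Z}(\mathcal{M}))$ — a measurable central selection of the fibrewise medians, valid even for unbounded $a\in LS_{h}(\mathcal{M})$ — and the $\mathcal{D}$-inequalities must be upgraded to honest $\preceq$ relations through the comparison theorem on each central fibre.

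Granting such a $c$, I would build $u_{\varepsilon}$ as a self-adjoint unitary that pairs the region $\{a>c\}$ with $\{a\leq c\}$. Discretise the spectrum of $a-c$ into slabs of width at most $\varepsilon'$, so that $a$ is within $\varepsilon'$ of a constant on each (commuting) slab projection; then, using the matching guaranteed by the median together with the comparison theorem, pair each positive slab with a subprojection of $e_{-}\vee e_{0}$ and each negative slab with a subprojection of $e_{+}\vee e_{0}$, leaving the unused part of $e_{0}$ fixed (where $|a-c|=0$, so $u$ is unconstrained). For a single swap implemented by a partial isometry $w$ with $w^{*}w=f_{2}\leq\{a\leq c\}$ and $ww^{*}=f_{1}\leq\{a\geq c\}$, the orthogonality $e_{+}(a-c)e_{-}=0$ forces the relevant off-diagonal blocks of $a$ to vanish, and a direct block computation gives $u^{*}au-a=(w(f_{2}af_{2})w^{*}-f_{1}af_{1})\oplus(w^{*}(f_{1}af_{1})w-f_{2}af_{2})\oplus 0$; since $f_{1}af_{1}\geq cf_{1}$ and $w(f_{2}af_{2})w^{*}\leq cf_{1}$, this forces $|u^{*}au-a|\geq f_{1}(a-c)f_{1}\oplus f_{2}(c-a)f_{2}$. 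Summing over matched slabs reconstitutes $|a-c|$, the near-constancy of $a$ on each slab controls the cross-terms arising from partners that are not themselves spectral projections, and letting $\varepsilon'\to0$ converts the guaranteed estimate into $(1-\varepsilon)|a-c|$.

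The hard part is the interface between the two constructions. The matching realising $e_{+}\preceq e_{-}\vee e_{0}$ produces partner projections that respect \emph{equivalence} but not the \emph{spectral} block structure of $a$, whereas the clean operator inequality above needs block-diagonality. Reconciling these — supporting each partial isometry on a single pair of thin spectral slabs, assembling them into one unitary of $\mathcal{M}$, and estimating the accumulated error — is precisely what forces the approximation and hence the factor $(1-\varepsilon)$ together with a whole family $\{u_{\varepsilon}\}_{\varepsilon>0}$ rather than a single optimal unitary. The other delicate point is the existence of the central median as a bona fide element of $S(\mathcal{Z}(\mathcal{M}))$ for possibly unbounded $a$, where the structure theory of $LS(\mathcal{M})$ (Proposition \ref{p1} and the properties (D1)--(D7) of $\mathcal{D}$) carries the weight of the argument.
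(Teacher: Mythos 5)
A preliminary remark: this paper does not prove Theorem \ref{t_bs} at all --- it is quoted verbatim, with citation, from \cite[Theorem 1]{BS}, and is used here purely as an external ingredient in Theorems \ref{t4} and \ref{t_mbm}. So there is no in-paper argument to compare your sketch against; it has to be judged on its own terms and against the published proof in \cite{BS}. Your overall plan --- rewriting $|[a,u_\varepsilon]|=|u_\varepsilon^*au_\varepsilon-a|$, choosing $c$ as a central ``median'' of $a$, and building $u_\varepsilon$ from swaps exchanging the spectral region $\{a>c\}$ with part of $\{a\le c\}$ --- is the right kind of strategy, and the single-swap computation is correct \emph{when both legs of the swap commute with $a$}. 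But the two steps you yourself flag as the hard part are genuinely open in your write-up, and they are exactly where the theorem lives.

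First, the median. The quantity $\mathcal{D}(e_+)-\mathcal{D}(e_-)$ is meaningless precisely where the theorem is hardest: on the properly infinite part of $\mathcal{M}$ (in particular in type $III$) both terms equal $+\infty$ by property (D1), and comparison of dimension functions does not detect Murray--von Neumann subequivalence outside the finite case, so no ``zero-crossing'' argument can produce $c$ there. The proposed escape, reduction ``to the fibrewise (factor) situation'', is also not available for a general von Neumann algebra (direct-integral theory requires a separable predual, which is not assumed). A correct construction must run through the comparison theorem applied globally in $\mathcal{M}$, e.g.\ taking for each $\lambda\in\mathbb{Q}$ the largest central projection $z_\lambda$ with $z_\lambda E_\lambda^\bot(a)\preceq z_\lambda E_\lambda(a)$, proving monotonicity of $\{z_\lambda\}$, and assembling from this family a self-adjoint element of $S(\mathcal{Z}(\mathcal{M}))$; that assembly is precisely the ``measurable central selection'' you defer. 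Second, and more seriously, the matching step as described would fail. Your displayed formula $u^*au-a=(w(f_2af_2)w^*-f_1af_1)\oplus(w^*(f_1af_1)w-f_2af_2)\oplus 0$ is valid only if $f_1$ and $f_2$ commute with $a$; for a partner $f_2\le e_-\vee e_0$ produced by comparison theory this fails, and $u^*au-a$ then acquires off-diagonal terms such as $-f_2a(\mathbf{1}-f_1-f_2)$, so the direct-sum decomposition, and with it the blockwise lower bound, is simply not there. Nor can such cross terms be ``controlled by near-constancy on slabs'' and absorbed into the factor $(1-\varepsilon)$: the operator absolute value admits no perturbation inequality of the required kind --- from $x=y+z$ with $|y|\ge|a-c|$ and $\|z\|\le\delta$ one cannot conclude $|x|\ge|a-c|-\delta\mathbf{1}$, since for operators even $-C\le X\le C$ does not imply $|X|\le C$. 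A correct implementation must instead arrange \emph{exact} block structure, i.e.\ choose the partner of each spectral slab to commute with $a$ again (using spectral subdivisions and subprojections of eigenprojections), and this, together with the global assembly of all swaps into one unitary and the treatment of unbounded $a$, is the actual technical content of \cite{BS}. A smaller point in the same direction: for unbounded $a$, slabs of uniform width $\varepsilon'$ give additive errors, which can never yield the multiplicative bound $(1-\varepsilon)|a-c|$ near $\{a=c\}$; the subdivision must be proportional to the distance from $c$. In short: the skeleton is right, but both load-bearing steps are missing, and one of them is stated in a form that is false for the projections your comparison argument actually produces.
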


The following theorem establishes innerness of every derivation $\delta: \mathcal{M}\rightarrow\mathcal{E}$ in case of properly infinite von Neumann algebra
$\mathcal{M}$.

\begin{theorem}
\label{t4} Let $\mathcal{M}$ be a properly infinite von Neumann algebra
and let $\mathcal{E}$ be a $\mathcal{M}$-bimodule of locally measurable operators.
Then any derivation $\delta:\mathcal{M}\longrightarrow \mathcal{E}$ is inner.
\end{theorem}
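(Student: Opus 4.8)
The plan is to realise $\delta$ as the restriction to $\mathcal{M}$ of an inner derivation of all of $LS(\mathcal{M})$, and then to use Theorem \ref{t_bs} together with the solidity property (\ref{e_solid}) to show that the implementing operator may in fact be chosen inside $\mathcal{E}$.

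First I would regard $\delta$ as a derivation $\delta\colon\mathcal{M}\to LS(\mathcal{M})$, which is legitimate since $\mathcal{E}\subset LS(\mathcal{M})$. By the extension result of \cite{BCS3} it extends to a derivation $\hat\delta\colon LS(\mathcal{M})\to LS(\mathcal{M})$, and as $\mathcal{M}$ is properly infinite Theorem \ref{t2} yields $d\in LS(\mathcal{M})$ with $\delta(x)=[d,x]$ for all $x\in\mathcal{M}$. The whole difficulty is that a priori $d$ lies only in $LS(\mathcal{M})$: I must exhibit a representative of the same inner derivation that belongs to $\mathcal{E}$. Since $d$ is determined only modulo the centre $S(\mathcal{Z}(\mathcal{M}))$ of $LS(\mathcal{M})$, I am free to subtract central operators from it.

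Next I would reduce to a self-adjoint derivation. As $\mathcal{E}$ is closed under the adjoint, $\delta^*(x):=(\delta(x^*))^*$ again maps $\mathcal{M}$ into $\mathcal{E}$, so $\delta=Re(\delta)+i\,Im(\delta)$ with $Re(\delta),Im(\delta)$ self-adjoint derivations into $\mathcal{E}$; since $\mathcal{E}$ is a linear subspace it suffices to implement each of these by an element of $\mathcal{E}$. Assume then $\delta=\delta^*$ and write $\delta=\delta_d$. Self-adjointness forces $[d+d^*,x]=0$ for every $x\in\mathcal{M}$, whence $d+d^*$ is central; subtracting the central operator $Re(d)=(d+d^*)/2$ I may assume $d=ih$ with $h\in LS_h(\mathcal{M})$, so that $\delta(u)=i[h,u]$ for each unitary $u\in\mathcal{M}$.

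Finally comes the key step, which I expect to be the main obstacle. Applying Theorem \ref{t_bs} to $h$ produces a central self-adjoint operator $c$ and unitaries $\{u_\varepsilon\}_{\varepsilon>0}\subset\mathcal{M}$ with $|[h,u_\varepsilon]|\geq(1-\varepsilon)|h-c|$. Since $\delta(u_\varepsilon)=i[h,u_\varepsilon]\in\mathcal{E}$ and $\mathcal{E}$ is an $\mathcal{M}$-bimodule, $|[h,u_\varepsilon]|=|\delta(u_\varepsilon)|\in\mathcal{E}$. Choosing, say, $\varepsilon=\tfrac12$ gives $|h-c|\leq 2\,|[h,u_{1/2}]|$, so solidity (\ref{e_solid}) forces $|h-c|\in\mathcal{E}$, and then the polar decomposition of the self-adjoint operator $h-c$ together with the bimodule property yields $h-c\in\mathcal{E}$. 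As $c$ is central, $d':=i(h-c)\in\mathcal{E}$ still satisfies $\delta(x)=[d',x]$ for all $x\in\mathcal{M}$. Recombining the real and imaginary parts produces an implementing element of $\mathcal{E}$, so $\delta$ is inner. The crux is exactly Theorem \ref{t_bs}, which converts the otherwise uncontrolled ``distance to the centre'' $|h-c|$ into a quantity dominated by the genuine values $\delta(u_\varepsilon)$ of the derivation, after which solidity completes the argument.
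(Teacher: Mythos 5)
Your proof is correct and follows essentially the same route as the paper: extend $\delta$ to a derivation of $LS(\mathcal{M})$ via \cite{BCS3}, implement it by Theorem \ref{t2}, and then use the commutator estimate of Theorem \ref{t_bs} together with solidity (\ref{e_solid}) to replace the implementing operator by one lying in $\mathcal{E}$. The only organizational difference is that the paper splits the implementing operator $a=a_1+ia_2$ and applies Theorem \ref{t_bs} to each self-adjoint part directly, whereas you split the derivation into self-adjoint parts and normalize each implementing element to the skew-adjoint form $ih$; your extra step — that $d+d^*$, commuting with $\mathcal{M}$, must be central — is valid by the density of $\mathcal{M}$ in $(LS(\mathcal{M}),t(\mathcal{M}))$ (Proposition \ref{p2_2}) and is precisely the argument the paper itself uses in the proof of Theorem \ref{t_mbm}.
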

\begin{proof}
By \cite[Theorem 4.8]{BCS3} there exists a derivation
$\overline{\delta}: LS(\mathcal{M})\rightarrow
LS(\mathcal{M})$, such that $\overline{\delta}(x)=\delta(x)$
for all $x\in\mathcal{M}$. By Theorem \ref{t2}, there exists an element  $a\in LS(\mathcal{M})$, such that
$\overline{\delta}(x)=[a,x]$ for all $x\in LS(\mathcal{M})$.
It is clear that
$[a,\mathcal{M}]=\overline{\delta}(\mathcal{M})=\delta(\mathcal{M})\subset
\mathcal{E}$.

Let $a_1=Re(a),\ a_2=Im(a)$. Since $[a^*,x]=-[a,x^*]^*\in\mathcal{E}$ for any $x\in\mathcal{M}$, it follows that $[a_1,x]=[a+a^*,x]/2\in\mathcal{E}$ and
$[a_2,x]=[a-a^*,x]/2i\in\mathcal{E}$ for all $x\in\mathcal{M}$.

By Theorem \ref{t_bs} and by taking $\varepsilon=1/2$ in (\ref{e_bs}) we obtain that there exist
$c_1,c_2\in\mathcal{Z}_h(LS(\mathcal{M}))$ and unitary operators
$u_1,u_2\in\mathcal{M}$ such that
$$2|[a_i,u_i]|\geq |a_i-c_i|,\ i=1,2.$$ Since
$[a_i,u_i]\in\mathcal{E}$ and $\mathcal{E}$ is
$\mathcal{M}$-bimodule we have that $d_i:=a_i-c_i\in\mathcal{E}$, $i=1,2$
(see (\ref{e_solid})). Therefore $d=d_1+id_2\in\mathcal{E}$.
Since $c_1,c_2$ are central projections from $LS(\mathcal{M})$ it follows that $\delta(x)=[a,x]=[d,x]$ for all $x\in\mathcal{M}$.
\end{proof}

Let $\mathcal{M}$ be a von Neumann algebra. If a $\mathcal{M}$-bimodule of locally measurable operators $\mathcal{E}$ is equipped with a norm
$\left\Vert \cdot \right\Vert _{\mathcal{E}}$, satisfying
\begin{equation}
\left\Vert uxv\right\Vert _{\mathcal{E}}\leq \left\Vert u\right\Vert
_{\mathcal{M}}\left\Vert v\right\Vert _{\mathcal{M}}\left\Vert
x\right\Vert _{\mathcal{E}},\ \ \ x\in \mathcal{E},\ u,v\in \mathcal{M}\text{,}
\label{ChIVeq21}
\end{equation}%
then $\mathcal{E}$ is called a \emph{ normed $\mathcal{M}$-bimodule of locally measurable operators}.
If, in addition, $(\mathcal{E},\|\cdot\|_{\mathcal{E}})$ is a Banach space, then $\mathcal{E}$ is called a \emph{ Banach $\mathcal{M}$-bimodule of locally measurable operators}.

Easy to see that for the norm $\|\cdot\|_{\mathcal{E}}$ on a normed $\mathcal{M}$-bimodule of locally measurable operators $\mathcal{E}$ the following properties hold:
\begin{equation}
\label{e1}
\||a|\|_\mathcal{E}=\|a^*\|_\mathcal{E}=\|a\|_\mathcal{E}\ \text{for any}\ a\in\mathcal{E};
\end{equation}
\begin{equation}
\label{e2}
\|a\|_\mathcal{E}\leq\|b\|_\mathcal{E}\ \text{for any}\ a,b\in\mathcal{E},\ 0\leq a\leq b;
\end{equation}
\begin{equation}
\label{e3}
\begin{split}
\text{If}\ q\in\mathcal{E}\cap\mathcal{P}(\mathcal{M}),\ p\in \mathcal{P}(\mathcal{M}),\ p\preceq q,\\ \text{ then }\
p\in\mathcal{E}\ \text{and}\ \|p\|_\mathcal{E}\leq\|q\|_\mathcal{E}.
\end{split}
\end{equation}

\begin{proposition}
\label{pe4}
If $\{p_k\}_{k=1}^n\subset \mathcal{P}(\mathcal{M})\cap\mathcal{E}$ then
\begin{equation}
\label{e4}
\bigvee_{k=1}^n p_n\in\mathcal{E}\ \text{and}\ \|\bigvee_{k=1}^n p_n\|_\mathcal{E}\leq\sum_{k=1}^n\|p_k\|_\mathcal{E}.
\end{equation}
\end{proposition}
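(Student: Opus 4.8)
The plan is to argue by induction on $n$, reducing the whole statement to the two-projection case and then settling that case by means of the Kaplansky parallelogram law together with property (\ref{e3}).

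First I would dispose of the base case $n=1$, which is immediate. For the inductive step, writing $\bigvee_{k=1}^{n}p_k=\left(\bigvee_{k=1}^{n-1}p_k\right)\vee p_n$ and invoking the inductive hypothesis (which places $\bigvee_{k=1}^{n-1}p_k$ in $\mathcal{E}$ with $\|\bigvee_{k=1}^{n-1}p_k\|_{\mathcal{E}}\leq\sum_{k=1}^{n-1}\|p_k\|_{\mathcal{E}}$), it suffices to prove the assertion for two projections $e,f\in\mathcal{P}(\mathcal{M})\cap\mathcal{E}$: namely that $e\vee f\in\mathcal{E}$ and $\|e\vee f\|_{\mathcal{E}}\leq\|e\|_{\mathcal{E}}+\|f\|_{\mathcal{E}}$.

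For the two-projection case, the key observation is that $e\vee f-e$ is again a projection and, by the Kaplansky parallelogram law of Murray--von Neumann comparison theory, $e\vee f-e\sim f-e\wedge f\leq f$, so that $e\vee f-e\preceq f$. Since $f\in\mathcal{E}$, property (\ref{e3}) then yields both $e\vee f-e\in\mathcal{E}$ and $\|e\vee f-e\|_{\mathcal{E}}\leq\|f\|_{\mathcal{E}}$. Because $\mathcal{E}$ is a linear subspace and $e\vee f=e+(e\vee f-e)$ is a sum of two elements of $\mathcal{E}$, we conclude $e\vee f\in\mathcal{E}$; the triangle inequality for $\|\cdot\|_{\mathcal{E}}$ then gives $\|e\vee f\|_{\mathcal{E}}\leq\|e\|_{\mathcal{E}}+\|e\vee f-e\|_{\mathcal{E}}\leq\|e\|_{\mathcal{E}}+\|f\|_{\mathcal{E}}$, which closes the induction.

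I do not anticipate a genuine obstacle here: the argument rests only on (\ref{e3}), on the linearity of $\mathcal{E}$, and on the standard equivalence $e\vee f-e\sim f-e\wedge f$. The only point requiring a little care is to apply the comparison to the correct difference of projections, so that the dominating projection is one already known to lie in $\mathcal{E}$ (here $f$); interchanging the roles of $e$ and $f$ would instead dominate by $e$, which is equally admissible, but one must commit to a single choice.
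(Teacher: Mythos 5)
Your proof is correct and follows essentially the same route as the paper: the two-projection case via the Kaplansky equivalence $e\vee f-e\sim f-e\wedge f\leq f$ combined with property (\ref{e3}), linearity of $\mathcal{E}$, and the triangle inequality, then induction on $n$. The only difference is that you spell out the induction explicitly, which the paper leaves as a remark.
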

\begin{proof}
If $p,q\in\mathcal{P}(\mathcal{M})\cap\mathcal{E}$, then $p\vee
q-p\sim q-p\wedge q\leq q$ and therefore, $p\vee q-p\in\mathcal{E}$
and $\|p\vee q-p\|_{\mathcal{E}}\leq\|q\|_{\mathcal{E}}$ (see
(\ref{e3})). Hence, $p\vee q=(p\vee q-p)+p\in\mathcal{E}$
and $\|p\vee q\|_{\mathcal{E}}-\|p\|_{\mathcal{E}}\leq\|p\vee
q-p\|_{\mathcal{E}}\leq\|q\|_{\mathcal{E}}$.

For an arbitrary finite set $\{p_k\}_{k=1}^n\subset \mathcal{P}(\mathcal{M})\cap\mathcal{E}$ proposition (\ref{e4}) is proved using  mathematical induction.
\end{proof}

In Lemmas \ref{l_e1}-\ref{l_e5} given below we assume that on a von Neumann algebra $\mathcal{M}$ there is a faithful normal finite trace $\tau$. In this case, the algebra $\mathcal{M}$ is finite. Moreover,
$LS(\mathcal{M})=S(\mathcal{M})=S(\mathcal{M},\tau)$,
$t(\mathcal{M})=t_\tau$ and $(LS(\mathcal{M}),t(\mathcal{M}))$ is
an $F$-space.

Later we need the following

\begin{proposition} \cite[Prop.3.5.7(i)]{MCh}
\label{p_mchtau} Let $\mathcal{M}$ be a von Neumann algebra with faithful normal finite trace $\tau$ and $\{p_n\}_{n=1}^\infty\subset\mathcal{P}(\mathcal{M})$. Then
$$p_n\stackrel{t(\mathcal{M})}{\longrightarrow}0\Leftrightarrow\tau(p_n)\rightarrow 0.$$
\end{proposition}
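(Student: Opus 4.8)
The statement to prove is Proposition~\ref{p_mchtau}: for a von Neumann algebra $\mathcal{M}$ with a faithful normal finite trace $\tau$ and a sequence of projections $\{p_n\}_{n=1}^\infty\subset\mathcal{P}(\mathcal{M})$, one has $p_n\stackrel{t(\mathcal{M})}{\longrightarrow}0$ if and only if $\tau(p_n)\rightarrow 0$. The plan is to reduce the abstract convergence criterion of Proposition~\ref{p2_1}(i) to the concrete trace condition, using the fact that when $\mathcal{M}$ carries a faithful normal finite trace the dimension function and the central trace can be handled explicitly.

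First I would prove the implication $\tau(p_n)\to 0\Rightarrow p_n\stackrel{t(\mathcal{M})}{\longrightarrow}0$. Since $\tau$ is a faithful normal finite trace on $\mathcal{M}$, its restriction to $\mathcal{Z}(\mathcal{M})$ is a faithful normal finite trace, so $\mathcal{Z}(\mathcal{M})$ is $\sigma$-finite and Proposition~\ref{p3} applies: the sets $V(\varepsilon,\beta,\gamma)$ of~(\ref{v2}) form a base of neighbourhoods of zero for $t(\mathcal{M})$. Fix $\varepsilon,\beta,\gamma>0$; I must show $p_n\in V(\varepsilon,\beta,\gamma)$ for all large $n$. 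Here the natural choice is to take $z=\mathbf{1}$ (so that $\tau(z^\bot)=0\le\beta$ and $\varphi(z)=\mathbf{1}$) and $p=p_n^\bot$, giving $p_np=p_np_n^\bot=0\in\mathcal{M}$ with norm $0\le\varepsilon$; it then remains to check $\mathcal{D}(z\,p^\bot)=\mathcal{D}(p_n)\le\gamma\mathbf{1}$. The point is that the dimension function $\mathcal{D}$ of a finite algebra with finite central trace can be realized through $\tau$, so $\tau(p_n)\to 0$ forces $\mathcal{D}(p_n)$ to be small; more precisely, $\int_\Omega\mathcal{D}(p_n)\,d\mu=\tau(p_n)\to 0$, whence $\mathcal{D}(p_n)$ tends to $0$ in the local measure topology $t(L^\infty(\Omega))$, and for $n$ large enough one can arrange $\mathcal{D}(p_n)\le\gamma\mathbf{1}$ on a central projection of small complementary trace, recovering membership in $V(\varepsilon,\beta,\gamma)$ with a suitably chosen $z$.

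For the converse, $p_n\stackrel{t(\mathcal{M})}{\longrightarrow}0\Rightarrow\tau(p_n)\to 0$, I would invoke Proposition~\ref{p2_1}(i): there is a sequence $\{z_n\}\subset\mathcal{P}(\mathcal{Z}(\mathcal{M}))$ with $z_np_n\in\mathcal{P}_{fin}(\mathcal{M})$, $\varphi(z_n^\bot)\stackrel{t(L^\infty(\Omega))}{\longrightarrow}0$, and $\mathcal{D}(z_np_n)\stackrel{t(L^\infty(\Omega))}{\longrightarrow}0$. Splitting $\tau(p_n)=\tau(z_np_n)+\tau(z_n^\bot p_n)$, I bound the second term by $\tau(z_n^\bot)=\int_\Omega\varphi(z_n^\bot)\,d\mu$, which tends to $0$ because convergence of $\varphi(z_n^\bot)$ to $0$ in $t(L^\infty(\Omega))$ together with the finiteness of $\mu(\Omega)$ (dominated convergence, using $0\le\varphi(z_n^\bot)\le\mathbf{1}$) gives $\int_\Omega\varphi(z_n^\bot)\,d\mu\to 0$. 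For the first term I use $\tau(z_np_n)=\int_\Omega\mathcal{D}(z_np_n)\,d\mu$ and again pass the local-measure convergence of $\mathcal{D}(z_np_n)$ through the integral by dominated convergence; here the dominating function comes from $\mathcal{D}(z_np_n)\le\mathcal{D}(p_n\wedge z_n)\le\mathcal{D}(\mathbf{1}$-in-the-reduced-sense$)$, controlled via the finite trace. Combining the two estimates yields $\tau(p_n)\to 0$.

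The main obstacle is making rigorous the identification $\tau(p)=\int_\Omega\mathcal{D}(p)\,d\mu$ relating the (central-valued) dimension function to the scalar trace, and justifying the passage from $t(L^\infty(\Omega))$-convergence of the real-valued functions $\mathcal{D}(z_np_n)$ and $\varphi(z_n^\bot)$ to convergence of their integrals. This is where the finiteness of $\tau$ is essential: it guarantees $\mu(\Omega)<\infty$, so that $t(L^\infty(\Omega))$-convergence of a bounded (or suitably dominated) sequence upgrades to $L^1(\mu)$-convergence via dominated convergence, and it guarantees that $\mathcal{D}$ takes finite values that integrate to the trace. Once these two integral-convergence facts are established, both implications follow by the elementary splitting and estimates sketched above.
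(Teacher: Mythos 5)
The paper does not prove this proposition at all: it is imported verbatim from \cite[Prop.\ 3.5.7(i)]{MCh}, so there is no internal argument to compare yours against, and your proposal has to be judged on its own merits. On those merits it is essentially correct and completable. The one point you flag as the ``main obstacle,'' namely the identity $\tau(p)=\int_\Omega\mathcal{D}(p)\,d\mu$, is a standard fact which you are entitled to use: since $\tau$ is a faithful normal \emph{finite} trace, $\mathcal{M}$ is a finite von Neumann algebra, hence carries a center-valued trace $\Phi\colon\mathcal{M}\to\mathcal{Z}(\mathcal{M})$ with $\tau=\tau|_{\mathcal{Z}(\mathcal{M})}\circ\Phi$; then $\mathcal{D}:=\varphi\circ\Phi$ satisfies (D1)--(D5) and $\mu:=\tau|_{\mathcal{Z}(\mathcal{M})}\circ\varphi^{-1}$ is a finite measure, and the paper's remark that $t(\mathcal{M})$ does not depend on the choice of $\mathcal{D}$, of $\varphi$, or of $\mu$ within its equivalence class licenses working with exactly these compatible choices. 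With that in hand, your two directions go through: for $\tau(p_n)\to 0$ you take $p=p_n^\bot$ and, by Chebyshev, $z_n=\varphi^{-1}(\chi_{\{\mathcal{D}(p_n)\le\gamma\}})$, giving $p_n\in V(\varepsilon,\beta,\gamma)$ eventually (note your first suggestion $z=\mathbf{1}$ does not work and your own correction is the right one); for the converse, the splitting $\tau(p_n)=\tau(z_np_n)+\tau(z_n^\bot p_n)$, the bounds $\tau(z_n^\bot p_n)\le\tau(z_n^\bot)$ and $\mathcal{D}(z_np_n)\le\mathcal{D}(\mathbf{1})=\mathbf{1}$ (here $z_np_n=z_n\wedge p_n$ exactly, since the projections commute), and convergence in measure plus uniform boundedness on the finite measure space give $L^1$-convergence of both terms.

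It is worth pointing out a shorter route that stays entirely inside the paper's toolkit: for a finite trace the topologies $t_\tau$ and $t(\mathcal{M})$ coincide (as the paper recalls from \cite[\S\S 3.4, 3.5]{MCh}), and for $t_\tau$ the equivalence is almost immediate from the neighbourhoods $U(\varepsilon,\delta)$. Indeed, if $p_n\in U(\varepsilon,\delta)$ with $\varepsilon<1$, there is a projection $q$ with $\tau(q^\bot)\le\delta$ and $\|p_nq\|_{\mathcal{M}}\le\varepsilon<1$, which forces $p_n\wedge q=0$, hence $p_n\sim p_n\vee q-q\le q^\bot$ and $\tau(p_n)\le\delta$; conversely, if $\tau(p_n)\to 0$, then taking $q=p_n^\bot$ shows $p_n\in U(\varepsilon,\tau(p_n))$ for every $\varepsilon>0$. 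This avoids dimension functions and the center-valued trace altogether, at the cost of invoking the coincidence of the two topologies.
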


Let $\mathcal{E}$ be a Banach $\mathcal{M}$-bimodule of locally
measurable operators in $LS(\mathcal{M})$.

\begin{lemma}
\label{l_e1} If $\{p_n\}_{n=1}^\infty\subset\mathcal{P}(\mathcal{M})\cap\mathcal{E}$ and the series $\sum_{n=1}^\infty\|p_n\|_{\mathcal{E}}$ converges, then
$p=\bigvee_{n=1}^\infty p_n\in\mathcal{E}$ and $\|p\|_{\mathcal{E}}\leq \sum_{n=1}^\infty\|p_n\|_{\mathcal{E}}$.
\end{lemma}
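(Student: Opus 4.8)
The plan is to realize $p$ as a $\|\cdot\|_{\mathcal{E}}$-limit of finite suprema and then use the bimodule inequality to identify that limit with the lattice supremum. First I set $q_n=\bigvee_{k=1}^n p_k$, so that $\{q_n\}$ is an increasing sequence of projections with $q_n\uparrow p$. By Proposition \ref{pe4} each $q_n\in\mathcal{E}$ and $\|q_n\|_{\mathcal{E}}\le\sum_{k=1}^n\|p_k\|_{\mathcal{E}}\le\sum_{k=1}^\infty\|p_k\|_{\mathcal{E}}=:C<\infty$.

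Next I would show that $\{q_n\}$ is Cauchy in $(\mathcal{E},\|\cdot\|_{\mathcal{E}})$. For $m>n$ the element $q_m-q_n$ is a projection, and since $q_m=q_n\vee\bigvee_{k=n+1}^m p_k$, the Kaplansky parallelogram law gives $q_m-q_n\preceq\bigvee_{k=n+1}^m p_k\in\mathcal{E}$ (Proposition \ref{pe4}). Property (\ref{e3}) then yields $\|q_m-q_n\|_{\mathcal{E}}\le\|\bigvee_{k=n+1}^m p_k\|_{\mathcal{E}}$, which by (\ref{e4}) is at most $\sum_{k=n+1}^m\|p_k\|_{\mathcal{E}}$, a tail of a convergent series and hence $\to 0$. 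Completeness of $\mathcal{E}$ then produces $y\in\mathcal{E}$ with $q_n\to y$ in $\|\cdot\|_{\mathcal{E}}$, and by continuity of the norm $\|y\|_{\mathcal{E}}=\lim_n\|q_n\|_{\mathcal{E}}\le C$.

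The crux, and the step I expect to be the main obstacle, is to prove that this abstract norm limit $y$ actually equals $p$: a priori, convergence in $\|\cdot\|_{\mathcal{E}}$ carries no information about behaviour in the ambient algebra $LS(\mathcal{M})$, so the bimodule inequality (\ref{ChIVeq21}) must serve as the bridge. Since $p^\bot q_n=0$, I would estimate $\|p^\bot y\|_{\mathcal{E}}=\|p^\bot(y-q_n)\|_{\mathcal{E}}\le\|y-q_n\|_{\mathcal{E}}\to 0$, forcing $p^\bot y=0$, i.e. $y=py$. Similarly, for fixed $m$ and all $n\ge m$ one has $q_mq_n=q_m$, whence $\|q_my-q_m\|_{\mathcal{E}}=\|q_m(y-q_n)\|_{\mathcal{E}}\le\|y-q_n\|_{\mathcal{E}}\to 0$, so $q_my=q_m$. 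Consequently $q_m(y-p)=q_m-q_m=0$ for every $m$, which means the left support $l(y-p)$ lies below $q_m^\bot$ for all $m$, hence below $\bigwedge_m q_m^\bot=(\bigvee_m q_m)^\bot=p^\bot$. This gives $p(y-p)=0$, that is $py=p$, and combined with $y=py$ we conclude $y=p$.

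Therefore $p=y\in\mathcal{E}$ and $\|p\|_{\mathcal{E}}\le\sum_{k=1}^\infty\|p_k\|_{\mathcal{E}}$, as required. I note that the identification $y=p$ could alternatively be obtained by first showing $q_n\to p$ in $t(\mathcal{M})$ (via $\tau(p-q_n)\to 0$ and Proposition \ref{p_mchtau}) and then invoking $t(\mathcal{M})$-continuity of multiplication to pass to the limit; however, the support argument above is self-contained and avoids passing through the measure topology altogether.
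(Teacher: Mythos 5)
Your proof is correct and follows essentially the same route as the paper: the same finite suprema $q_n=\bigvee_{k=1}^n p_k$, the same Kaplansky-based estimate $q_m-q_n\preceq\bigvee_{k=n+1}^m p_k$ giving the Cauchy property, completeness of $\mathcal{E}$, and the bimodule inequality to relate the norm limit to $p$ and the $q_m$. The only divergence is the final identification of the limit with $p$: the paper proves $q=q^*$, $qp=q=pq$ and $q_{n}q_{n_0}$-type relations, then concludes from $q_n(p-q)q_n=0$, $s(p-q)\leq p$ and $q_n\uparrow p$ by citing \cite[Prop.~2.4.1(ix)]{MCh}, whereas your left-support computation ($q_m(y-p)=0$ for all $m$ forces $l(y-p)\leq q_m^\bot$, hence $l(y-p)\leq p^\bot$, so $py=p$ and $y=py=p$) reaches the same conclusion in a self-contained, elementary way.
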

\begin{proof}
Set $q_n=\bigvee_{k=1}^n p_k$. According to (\ref{e4}), $q_n\in\mathcal{E}$ and $\|q_n\|_{\mathcal{E}}\leq\sum_{k=1}^n\|p_k\|_{\mathcal{E}}$.

Let $n,m\in\mathbb{N},\ n<m$. By (\ref{e3}) and (\ref{e4}) we have that
\begin{gather*}
\begin{split}
\|q_m-q_n\|_{\mathcal{E}}&=\|q_n\vee\bigvee_{k=n+1}^m p_k-q_n\|_{\mathcal{E}}\\
&=\|\bigvee_{k=n+1}^m p_k-q_n\wedge\bigvee_{k=n+1}^m p_k\|_{\mathcal{E}}\leq\|\bigvee_{k=n+1}^m p_k\|_{\mathcal{E}}\leq
\sum_{k=n+1}^m\|p_k\|_{\mathcal{E}}.
\end{split}
\end{gather*}
Consequently, $\{q_n\}$ is a Cauchy sequence in  $(\mathcal{E},\|\cdot\|_{\mathcal{E}})$, and therefore there exists $q\in\mathcal{E}$, such that $\|q_n-q\|_{\mathcal{E}}\rightarrow 0$, in addition
$\|q\|_{\mathcal{E}}\leq\sum_{n=1}^\infty\|p_n\|_{\mathcal{E}}$.

Since
$$\|qp-q_n\|_{\mathcal{E}}=\|qp-q_np\|_{\mathcal{E}}\leq\|p\|_{\mathcal{M}}\|q-q_n\|_{\mathcal{E}},$$ it follows that $qp=q=q^*=pq$. Hence,
$s(p-q)\leq p$. Fix $n_0\in\mathbb{N}$, then for $n>n_0$, we have
\begin{gather*}
\begin{split}
\|q_{n_0}q-q_{n_0}\|_{\mathcal{E}}&=\|q_{n_0}q-q_{n_0}q_n\|_{\mathcal{E}}\\
&\leq
\|q_{n_0}\|_{\mathcal{M}}\|q-q_n\|_{\mathcal{E}}\leq\|q-q_n\|_{\mathcal{E}}.
\end{split}
\end{gather*}
Passing to the limit for $n\rightarrow\infty$, we obtain
$q_{n_0}q=q_{n_0}$. Therefore, $q_n(p-q)q_n=0$ for all
$n\in\mathbb{N}$. The inequality $s(p-q)\leq p$ and convergence
$q_n\uparrow p$ by \cite[Prop. 2.4.1(ix)]{MCh} imply that $q=p$.
\end{proof}

\begin{lemma}
\label{l_e2} If
$\{a_n\}_{n=1}^\infty\subset \mathcal{E}$ and $\|a_n\|_{\mathcal{E}}\rightarrow 0$,
then $a_n\stackrel{t(\mathcal{M})}{\longrightarrow}0$.
\end{lemma}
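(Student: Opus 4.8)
The plan is to reduce $\mathcal{E}$-norm convergence to the convergence of traces of spectral projections, and then to prove the purely order-theoretic fact that $\mathcal{E}$-norm-null projections have trace tending to zero. First I would invoke Proposition \ref{p2_1}(ii): since $t(\mathcal{M})=t_\tau$ in this setting, $a_n\to 0$ in $t(\mathcal{M})$ is equivalent to $E^\bot_\lambda(|a_n|)\to 0$ in $t(\mathcal{M})$ for every fixed $\lambda>0$. Because the ambient algebra carries a faithful normal finite trace $\tau$, Proposition \ref{p_mchtau} converts this into the scalar statement $\tau(E^\bot_\lambda(|a_n|))\to 0$ for every $\lambda>0$. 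Thus it suffices to control the trace of the spectral projections $p_n:=E^\bot_\lambda(|a_n|)$.

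Next I would bring the $\mathcal{E}$-norm into play. By the spectral theorem $\lambda p_n\leq |a_n|$, and $|a_n|\in\mathcal{E}$ with $\||a_n|\|_{\mathcal{E}}=\|a_n\|_{\mathcal{E}}$. Solidity (\ref{e_solid}) then gives $p_n\in\mathcal{E}$, and monotonicity of the norm (\ref{e2}) yields $\|p_n\|_{\mathcal{E}}\leq\lambda^{-1}\|a_n\|_{\mathcal{E}}\to 0$. So the problem is reduced to the key implication: if $\{r_n\}\subset\mathcal{P}(\mathcal{M})\cap\mathcal{E}$ and $\|r_n\|_{\mathcal{E}}\to 0$, then $\tau(r_n)\to 0$.

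This last implication is the heart of the matter and the step I expect to be the main obstacle, since it must convert $\mathcal{E}$-norm smallness into measure smallness. I would argue by contradiction: if $\tau(r_n)\not\to 0$, pass to a subsequence with $\tau(r_{n_k})\geq\varepsilon_0>0$ and, using $\|r_{n_k}\|_{\mathcal{E}}\to 0$, a further subsequence with $\sum_k\|r_{n_k}\|_{\mathcal{E}}<\infty$. Form the decreasing tail joins $q_m:=\bigvee_{k\geq m}r_{n_k}$. Lemma \ref{l_e1} applies to each tail and gives $q_m\in\mathcal{E}$ with $\|q_m\|_{\mathcal{E}}\leq\sum_{k\geq m}\|r_{n_k}\|_{\mathcal{E}}\to 0$. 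Let $q=\bigwedge_m q_m$; since $q\leq q_m$, property (\ref{e3}) forces $\|q\|_{\mathcal{E}}\leq\|q_m\|_{\mathcal{E}}\to 0$, so $q=0$.

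On the other hand $q_m\geq r_{n_m}$ gives $\tau(q_m)\geq\varepsilon_0$, and normality of $\tau$ (continuity along $q_m\downarrow q$) yields $\tau(q)=\lim_m\tau(q_m)\geq\varepsilon_0>0$, contradicting $q=0$. Hence $\tau(r_n)\to 0$, and applying this with $r_n=p_n$ for each $\lambda>0$ completes the argument. The only delicate points are the passage to a summable subsequence (so that Lemma \ref{l_e1} is available) and the simultaneous use of the normality of $\tau$ with the solidity of the $\mathcal{E}$-norm on projections; everything else is routine.
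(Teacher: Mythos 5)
Your proof is correct and follows essentially the same route as the paper: both reduce to the projection case via the estimate $\|E_\lambda^\bot(|a_n|)\|_{\mathcal{E}}\leq\lambda^{-1}\|a_n\|_{\mathcal{E}}$, then apply Lemma \ref{l_e1} to a subsequence with summable norms to form decreasing tail joins whose infimum is forced to be zero, and conclude by normality of the finite trace together with Propositions \ref{p_mchtau} and \ref{p2_1}(ii). The only difference is cosmetic: the paper runs the projection step directly (via the subsequence-of-a-subsequence reduction), whereas you frame it as a proof by contradiction with traces bounded below.
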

\begin{proof}
It is sufficient to show that every convergent to zero in the norm $\|\cdot\|_{\mathcal{E}}$ sequence from $\mathcal{E}$ has a subsequence convergent to zero in the topology $t(\mathcal{M})$.

Firstly, consider a sequence
$\{p_n\}_{n=1}^\infty\in\mathcal{P}(\mathcal{M})\cap\mathcal{E}$,
such that $\|p_n\|_{\mathcal{E}}\rightarrow 0$. Choose a subsequence $\{p_{n_k}\}_{k=1}^\infty$ so that
$\|p_{n_k}\|_{\mathcal{E}}\leq 2^{-k}$. By Lemma \ref{l_e1} for the sequence of projections $q_k=\sup_{l\geq k+1}p_{n_l}$we have $q_k\in\mathcal{E}$ and $\|q_k\|_{\mathcal{E}}\leq
2^{-k}$. If $q=\inf_{k\geq 1}q_k$, then $q\in\mathcal{E}$ and
$\|q\|_{\mathcal{E}}\leq\|q_k\|_{\mathcal{E}}\leq 2^{-k}$ for all
$k\in\mathbb{N}$, that implies $q=0$. Consequently,
$q_k\downarrow 0$, and therefore $\tau(q_k)\downarrow 0$.

Since $p_{n_{k+1}}\leq q_k$ for all $k\in\mathbb{N}$ we have
$\tau(p_{n_k})\rightarrow 0$, that by Proposition
\ref{p_mchtau} implies the convergence
$p_{n_k}\stackrel{t(\mathcal{M})}{\longrightarrow}0$. Thus, every sequence
$\{p_n\}_{n=1}^\infty\in\mathcal{P}(\mathcal{M})\cap\mathcal{E}$ convergent to zero in the norm $\|\cdot\|_{\mathcal{E}}$ automatic converges to zero in the topology $t(\mathcal{M})$.

Now, let $\{a_n\}_{n=1}^\infty\subset \mathcal{E}$ and $\|a_n\|_{\mathcal{E}}\rightarrow 0$. For every $\lambda>0$ inequality
$\lambda E_\lambda^\bot(|a_n|)\leq |a_n|E_\lambda^\bot(|a_n|)\leq |a_n|$ imply that
$$\|E_\lambda^\bot(|a_n|)\|_{\mathcal{E}}\stackrel{(\ref{e2})}{\leq}\lambda^{-1} \||a_n|\|_{\mathcal{E}}\stackrel{(\ref{e1})}{\leq}\lambda^{-1}\|a_n\|_{\mathcal{E}}\rightarrow 0.$$
According to the proven above we have that $E_\lambda^\bot(|a_n|)\stackrel{t(\mathcal{M})}{\longrightarrow}0$. Finally, by  Proposition \ref{p2_1} (ii) we obtain  $a_n\stackrel{t(\mathcal{M})}{\longrightarrow}0$.
\end{proof}

\begin{lemma}
\label{l_e4} If $\{a_n\}_{n=1}^\infty\subset LS(\mathcal{M})$ and $a_n\stackrel{t(\mathcal{M})}{\longrightarrow}0$, then there exists a sequence $\{a_{n_k}\}_{k=1}^\infty$ such that $a_{n_k}=b_k+c_k$, where $b_k\in\mathcal{M},\ c_k\in LS(\mathcal{M}),\ k\in\mathbb{N},\ \|b_k\|_{\mathcal{M}}\rightarrow 0$ and $s(|c_k|)\stackrel{t(\mathcal{M})}{\longrightarrow}0$.
\end{lemma}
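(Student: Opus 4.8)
The plan is to obtain both the subsequence and the decomposition directly from the spectral families of the moduli $|a_{n_k}|$, truncating each operator at a level that tends to zero. First I would convert the hypothesis into a quantitative statement adapted to the finite trace setting. Since $\mathcal{M}$ carries a faithful normal finite trace $\tau$, combining Proposition \ref{p2_1}(ii) with Proposition \ref{p_mchtau} shows that $a_n\stackrel{t(\mathcal{M})}{\longrightarrow}0$ is equivalent to the statement that $\tau(E^\bot_\lambda(|a_n|))\rightarrow 0$ as $n\rightarrow\infty$ for every fixed $\lambda>0$. This is the only place where the hypothesis is used, and it is what makes the truncation effective.

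Next I would perform a diagonal selection of indices. For each $k\in\mathbb{N}$ apply the equivalence above with $\lambda=1/k$: since $\tau(E^\bot_{1/k}(|a_n|))\rightarrow 0$ as $n\rightarrow\infty$, one may choose a strictly increasing sequence $n_1<n_2<\cdots$ with $\tau(E^\bot_{1/k}(|a_{n_k}|))\leq 1/k$ for all $k$. Writing the polar decomposition $a_{n_k}=u_k|a_{n_k}|$ with $u_k\in\mathcal{M}$, I then set $b_k=a_{n_k}E_{1/k}(|a_{n_k}|)$ and $c_k=a_{n_k}E^\bot_{1/k}(|a_{n_k}|)$, so that $a_{n_k}=b_k+c_k$. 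The operator $|a_{n_k}|E_{1/k}(|a_{n_k}|)$ is bounded with $\mathcal{M}$-norm at most $1/k$, and left multiplication by the partial isometry $u_k$ does not increase the norm; hence $b_k\in\mathcal{M}$ and $\|b_k\|_{\mathcal{M}}\leq 1/k\rightarrow 0$. For the second summand, right multiplication by the projection $E^\bot_{1/k}(|a_{n_k}|)$ forces $r(c_k)\leq E^\bot_{1/k}(|a_{n_k}|)$; since $|c_k|$ is positive and $r(|c_k|)=r(c_k)$, we get $s(|c_k|)=r(c_k)\leq E^\bot_{1/k}(|a_{n_k}|)$, whence $\tau(s(|c_k|))\leq 1/k\rightarrow 0$, and Proposition \ref{p_mchtau} yields $s(|c_k|)\stackrel{t(\mathcal{M})}{\longrightarrow}0$.

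The argument is a truncation combined with a diagonal argument, so there is no deep obstacle; the points requiring care are the initial reduction, which turns $t(\mathcal{M})$-convergence into decay of $\tau(E^\bot_\lambda(|a_n|))$ for each fixed level, and the identity $s(|c_k|)=r(c_k)$, which rests on $r(|x|)=r(x)$ and the coincidence of the two supports of a positive operator. The genuine tension is that demanding $\|b_k\|_{\mathcal{M}}\leq\lambda_k\rightarrow 0$ pushes the cutoff $\lambda_k$ downward, while for a \emph{fixed} operator a smaller cutoff enlarges $\tau(E^\bot_{\lambda_k}(|a_{n_k}|))$; it is precisely the freedom to advance the index $n_k$ far enough along the sequence that lets both quantities go to zero, and this is what the diagonal choice accomplishes.
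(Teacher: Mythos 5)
Your proof is correct and follows essentially the same route as the paper: the identical truncation $b_k=a_{n_k}E_{1/k}(|a_{n_k}|)$, $c_k=a_{n_k}E^\bot_{1/k}(|a_{n_k}|)$ along a diagonally chosen subsequence, with Propositions \ref{p2_1}(ii) and \ref{p_mchtau} doing all the work. The only difference is in how the subsequence is picked: the paper chooses $n_k$ so that $E^\bot_{1/k}(|a_{n_k}|)$ lies in the $k$-th member of a countable neighbourhood basis of the $F$-space $(LS(\mathcal{M}),t(\mathcal{M}))$, whereas you impose the quantitative condition $\tau(E^\bot_{1/k}(|a_{n_k}|))\leq 1/k$ directly, which is equivalent here by Proposition \ref{p_mchtau} and slightly cleaner, since it bypasses the (implicit) step of taking the neighbourhood basis nested.
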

\begin{proof}
Since $(LS(\mathcal{M}),t(\mathcal{M}))$ is an $F$-space there exists a countable basis $\{U_k\}_{k=1}^\infty$ of neighborhoods of zero of the topology $t(\mathcal{M})$.

By Proposition \ref{p2_1} (ii) we have
$E_\lambda^\bot(|a_n|)\stackrel{t(\mathcal{M})}{\longrightarrow}0$
for every $\lambda>0$. Therefore, there exists a sequence  $a_{n_k}$ such that  $E_{1/k}^\bot(|a_{n_k}|)\in
U_k$ for all $k\in\mathbb{N}$. Set
$b_k=a_{n_k}E_{1/k}(|a_{n_k}|)$ and
$c_k=a_{n_k}E_{1/k}^\bot(|a_{n_k}|)$. It is clear that
$b_k\in\mathcal{M}$ and $\|b_k\|_{\mathcal{M}}\leq 1/k$. Since
\begin{gather*}
\begin{split}
|c_k|&=(c_k^*c_k)^{1/2}= (E_{1/k}^\bot(|a_{n_k}|)|a_{n_k}|^2
E_{1/k}^\bot(|a_{n_k}|))^{1/2}
\\&= E_{1/k}^\bot(|a_{n_k}|)|a_{n_k}|
E_{1/k}^\bot(|a_{n_k}|) =|a_{n_k}|E_{1/k}^\bot(|a_{n_k}|),
\end{split}
\end{gather*}
it follows that
$$s(|c_k|)\leq E_{1/k}^\bot(|a_{n_k}|)\in U_k.$$
Since
$\{U_k\}_{k=1}^\infty$ is a basis of neighborhoods of zero of the topology
$t(\mathcal{M})$ we have
$E_{1/k}^\bot(|a_{n_k}|)\stackrel{t(\mathcal{M})}{\longrightarrow}0$,
that implies the convergence
$\tau(E_{1/k}^\bot(|a_{n_k}|))\rightarrow 0$ (Proposition
\ref{p_mchtau}). From the inequality
$\tau(s(|c_k|))\leq\tau(E_{1/k}^\bot(|a_{n_k}|))$ and Proposition
\ref{p_mchtau} we obtain
$s(|c_k|)\stackrel{t(\mathcal{M})}{\longrightarrow}0$.
\end{proof}

\begin{lemma}
\label{l_e5} Every derivation $\delta: LS(\mathcal{M})\rightarrow LS(\mathcal{M})$ with $\delta(\mathcal{M})\subset\mathcal{E}$ is $t(\mathcal{M})$-continuous.
\end{lemma}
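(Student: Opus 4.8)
The plan is to deduce $t(\mathcal{M})$-continuity from the Closed Graph Theorem, which is available here because both the domain and the range of $\delta$ are the same $F$-space $(LS(\mathcal{M}),t(\mathcal{M}))$ (recall $\mathcal{M}$ is finite, so this space is an $F$-space). Thus it suffices to show that the graph of $\delta$ is closed, i.e. that $a_n\stackrel{t(\mathcal{M})}{\longrightarrow}0$ together with $\delta(a_n)\stackrel{t(\mathcal{M})}{\longrightarrow}y$ forces $y=0$. Since $t(\mathcal{M})$ is metrizable I may pass to a subsequence at will, and Lemma \ref{l_e4} lets me write, after relabelling, $a_n=b_n+c_n$ with $b_n\in\mathcal{M}$, $\|b_n\|_{\mathcal{M}}\to0$ and $s(|c_n|)\stackrel{t(\mathcal{M})}{\longrightarrow}0$.

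First I would dispose of the bounded part. The restriction $\delta|_{\mathcal{M}}\colon\mathcal{M}\to\mathcal{E}$ is a derivation of the $C^{*}$-algebra $\mathcal{M}$ into the Banach $\mathcal{M}$-bimodule $\mathcal{E}$, so by the automatic continuity theorem of Ringrose \cite{Ringrose} it is bounded from $(\mathcal{M},\|\cdot\|_{\mathcal{M}})$ into $(\mathcal{E},\|\cdot\|_{\mathcal{E}})$. Hence $\|\delta(b_n)\|_{\mathcal{E}}\to0$, and Lemma \ref{l_e2} gives $\delta(b_n)\stackrel{t(\mathcal{M})}{\longrightarrow}0$. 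Consequently $\delta(c_n)=\delta(a_n)-\delta(b_n)\stackrel{t(\mathcal{M})}{\longrightarrow}y$, and the whole problem is reduced to the unbounded summand $c_n$.

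The core of the argument is a support reduction for $c_n$. Put $g_n=s(c_n)=l(c_n)\vee r(c_n)$; since $l(c_n)\sim r(c_n)=s(|c_n|)$ and $\tau$ is a finite trace, $\tau(g_n)\le2\tau(s(|c_n|))\to0$, so $g_n\stackrel{t(\mathcal{M})}{\longrightarrow}0$ by Proposition \ref{p_mchtau}. Because $c_n=g_nc_ng_n$, expanding $\delta(g_nc_ng_n)$ by the derivation identity and multiplying on both sides by $g_n^{\bot}$ annihilates every term, whence $g_n^{\bot}\delta(c_n)g_n^{\bot}=0$; equivalently
\begin{equation*}
\delta(c_n)=g_n\delta(c_n)+\delta(c_n)g_n-g_n\delta(c_n)g_n.
\end{equation*}
Now I invoke that $(LS(\mathcal{M}),t(\mathcal{M}))$ is a topological $*$-algebra, so multiplication is jointly $t(\mathcal{M})$-continuous. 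As $g_n\to0$ and $\delta(c_n)\to y$, each of the three products on the right tends to $0$, so $\delta(c_n)\stackrel{t(\mathcal{M})}{\longrightarrow}0$. By uniqueness of limits $y=0$, the graph is closed, and $\delta$ is $t(\mathcal{M})$-continuous.

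I expect the main obstacle to be precisely the unbounded part $c_n$: its $\delta$-image cannot be estimated directly, and a naive attempt at sequential continuity at $0$ is circular, since applying joint continuity of multiplication to the identity above already presupposes that $\delta(c_n)$ converges. The Closed Graph Theorem is what breaks the circle, supplying the convergence $\delta(c_n)\to y$ for free; after that the reduction $g_n^{\bot}\delta(c_n)g_n^{\bot}=0$ together with $g_n\to0$ forces $y=0$.
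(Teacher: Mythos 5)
Your proposal is correct, and its skeleton coincides with the paper's: the Closed Graph Theorem in the $F$-space $(LS(\mathcal{M}),t(\mathcal{M}))$, the decomposition $a_n=b_n+c_n$ from Lemma \ref{l_e4}, and Ringrose's theorem plus Lemma \ref{l_e2} to kill the bounded part. Where you genuinely diverge is the treatment of the unbounded part $c_n$. The paper writes $\delta(c_n)=\delta(c_n s(|c_n|))=\delta(c_n)s(|c_n|)+c_n\delta(s(|c_n|))$ and then, via support estimates and equivalence of left and right supports, obtains $\tau(s(\delta(c_n)))\leq 4\tau(s(|c_n|))\to 0$; this yields $\delta(c_n)\stackrel{t(\mathcal{M})}{\longrightarrow}0$ \emph{unconditionally}, i.e.\ without ever invoking the hypothesis that $\delta(a_n)$ converges. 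You instead prove the compression identity $g_n^{\bot}\delta(c_n)g_n^{\bot}=0$ with $g_n=s(c_n)$ and combine it with the assumed convergence $\delta(c_n)\to y$ and joint continuity of multiplication in $(LS(\mathcal{M}),t(\mathcal{M}))$ (legitimate here: with $\tau$ finite this is the measure topology $t_\tau$, and the paper itself uses joint sequential continuity of the product, e.g.\ at the end of the proof of Theorem \ref{t1}) to force $y=0$. Both arguments are sound; the trade-off is that your route is softer and avoids the support-counting bookkeeping, but it uses the closed-graph hypothesis essentially, whereas the paper's quantitative estimate actually shows $s(|c_n|)\to 0$ implies $\delta(c_n)\to 0$, a slightly stronger fact which (by the metrizability of $t(\mathcal{M})$ and the subsequence criterion for convergence) would even give sequential continuity at $0$ directly. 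One small point of rigor worth making explicit in your write-up: the step $\tau(g_n)\le 2\tau(s(|c_n|))$ uses that $l(c_n)\sim r(c_n)=s(|c_n|)$ and that a finite trace takes equal values on equivalent projections — exactly the same facts the paper uses, so nothing is missing, but they deserve a citation to Proposition \ref{p_mchtau} and to the equivalence $l(x)\sim r(x)$ recalled in the preliminaries.
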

\begin{proof}
Since $(LS(\mathcal{M}),t(\mathcal{M}))$ is an $F$-space it is sufficient to show that the graph of the linear operator $\delta$ is closed.

Suppose that the graph of the operator $\delta$ is not closed. Then there exists a sequence $\{a_n\}_{n=1}^\infty\subset LS(\mathcal{M})$ and $0\neq b\in LS(\mathcal{M})$ such that
$a_n\stackrel{t(\mathcal{M})}{\longrightarrow}0$ and $\delta(a_n)\stackrel{t(\mathcal{M})}{\longrightarrow}b$.

According to Lemma \ref{l_e4} and passing, if necessary, to a subsequence, we may assume that $a_n=b_n+c_n$, where $b_n\in\mathcal{M},\ c_n\in LS(\mathcal{M}),\ n\in\mathbb{N},\ \|b_n\|_{\mathcal{M}}\rightarrow 0$ and
$s(|c_n|)\stackrel{t(\mathcal{M})}{\longrightarrow}0$ for $n\rightarrow\infty$.

Since the restriction $\delta|_{\mathcal{M}}$ of the derivation $\delta$ to the von Neumann algebra $\mathcal{M}$ is a derivation from $\mathcal{M}$ into the Banach $\mathcal{M}$-bimodule, by Ringrose Theorem \cite{Ringrose} we have
$\|\delta(b_n)\|_{\mathcal{E}}\rightarrow 0$. Lemma \ref{l_e2} implies that $\delta(b_n)\stackrel{t(\mathcal{M})}{\longrightarrow}0$.

From the inequalities
$$\delta(c_n)=\delta(c_ns(|c_n|))=\delta(c_n)s(|c_n|)+c_n\delta(s(|c_n|))$$
we have that
$$s(\delta(c_n))\leq l(\delta(c_n)s(|c_n|))\vee r(\delta(c_n)s(|c_n|))\vee l(c_n\delta(s(|c_n|))) \vee r(c_n\delta(s(|c_n|))).$$
Since
$$l(c_n)\sim r(c_n)=s(|c_n|),\ l(\delta(c_n)s(|c_n|))\sim r(\delta(c_n)s(|c_n|))\leq s(|c_n|),$$
$$r(c_n\delta(s(|c_n|)))\sim l(c_n\delta(s(|c_n|)))\leq l(c_n)\preceq s(|c_n|),$$
it follows that
$$\tau(s(\delta(c_n)))\leq 4\tau(s(|c_n|)).$$
By Proposition \ref{p_mchtau},
$\tau(s(|c_n|))\rightarrow 0$, and therefore
$\tau(s(\delta(c_n)))\rightarrow 0$ and
$\tau(s(|\delta(c_n)|))\rightarrow 0$, that implies the convergence
$\tau(E_\lambda^\bot(|\delta(c_n)|))\rightarrow 0$ for every
$\lambda>0$. Hence by Propositions \ref{p2_1} (ii) and
\ref{p_mchtau}, we obtain
$\delta(c_n)\stackrel{t(\mathcal{M})}{\longrightarrow}0$.

Thus,
$\delta(a_n)=\delta(b_n)+\delta(c_n)\stackrel{t(\mathcal{M})}{\longrightarrow}0$, that contradicts to the inequality  $b\neq 0$. Consequently, the operator
$\delta$ has a closed graph, therefore $\delta$ is
$t(\mathcal{M})$-continuous.
\end{proof}

Now, we give the main result of this section.

\begin{theorem}
\label{t_mbm} Let $\mathcal{M}$ be a von Neumann algebra and let
$\mathcal{E}$ be a Banach  $\mathcal{M}$-bimodule of local
measurable operators. Then any derivation
$\delta:\mathcal{M}\rightarrow \mathcal{E}$ is inner. In
addition, there exist $d\in\mathcal{E}$ such that
$\delta(x)=[d,x]$ for all $x\in\mathcal{M}$ and
$\|d\|_{\mathcal{E}}\leq
2\|\delta\|_{\mathcal{M}\rightarrow\mathcal{E}}$. If
$\delta^*=\delta$ or $\delta^*=-\delta$ then $d$ may be chosen so that
$\|d\|_{\mathcal{E}}\leq\|\delta\|_{\mathcal{M}\rightarrow\mathcal{E}}$.
\end{theorem}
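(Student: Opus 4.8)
The plan is to reduce to a self-adjoint derivation, extend it to a spatial derivation of all of $LS(\mathcal{M})$, and then subtract a central part via Theorem~\ref{t_bs} so as to land the implementing element inside $\mathcal{E}$ with the stated norm control. First, since $\mathcal{E}$ is $*$-closed (as recorded before (\ref{e_solid})), $\delta^*(x)=(\delta(x^*))^*$ is again a derivation $\mathcal{M}\to\mathcal{E}$, and $\mathrm{Re}(\delta)=(\delta+\delta^*)/2$, $\mathrm{Im}(\delta)=(\delta-\delta^*)/2i$ are self-adjoint derivations $\mathcal{M}\to\mathcal{E}$. Using (\ref{e1}) together with $\|x^*\|_{\mathcal{M}}=\|x\|_{\mathcal{M}}$, both have derivation norm at most $\|\delta\|_{\mathcal{M}\to\mathcal{E}}$. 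Thus, if I show that every self-adjoint derivation $\mathcal{M}\to\mathcal{E}$ is implemented by a skew-adjoint element of $\mathcal{E}$ whose $\mathcal{E}$-norm is at most the derivation norm, then writing $\mathrm{Re}(\delta)=\delta_{d_1}$, $\mathrm{Im}(\delta)=\delta_{d_2}$ gives $\delta=\delta_{d}$ with $d=d_1+id_2$ and $\|d\|_{\mathcal{E}}\le\|d_1\|_{\mathcal{E}}+\|d_2\|_{\mathcal{E}}\le 2\|\delta\|_{\mathcal{M}\to\mathcal{E}}$; when $\delta^*=\pm\delta$ one summand vanishes and the bound sharpens to $\|\delta\|_{\mathcal{M}\to\mathcal{E}}$. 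So it suffices to treat a self-adjoint $\delta$.

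For self-adjoint $\delta$, by \cite[Theorem~4.8]{BCS3} it extends to a derivation $\overline{\delta}\colon LS(\mathcal{M})\to LS(\mathcal{M})$, and replacing $\overline\delta$ by $(\overline\delta+\overline\delta{}^{*})/2$ I may take $\overline\delta$ self-adjoint with $\overline\delta(\mathcal{M})\subset\mathcal{E}$. I would then prove $\overline\delta$ spatial by a central decomposition: split $\mathcal{M}$ through a central projection into a properly infinite part and a finite part, and decompose the finite part into summands $z_i\mathcal{M}$ whose centre is $\sigma$-finite, each therefore carrying a faithful normal finite trace. On the properly infinite summand the restriction of $\overline\delta$ is inner by Theorem~\ref{t2}. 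On each finite summand the restricted derivation maps $z_i\mathcal{M}$ into the Banach $z_i\mathcal{M}$-bimodule $z_i\mathcal{E}=\{y\in\mathcal{E}:z_iy=y\}$, so it is $t(z_i\mathcal{M})$-continuous by Lemma~\ref{l_e5} and hence inner by Theorem~\ref{t1}. Adjusting each implementer by its (central) real part to render it skew-adjoint and gluing through Lemma~\ref{ll5} (Proposition~\ref{p1}) produces a single skew-adjoint $a\in LS(\mathcal{M})$ with $\overline\delta=\delta_a$, so $\delta(x)=[a,x]$ for all $x\in\mathcal{M}$.

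It remains to force the implementer into $\mathcal{E}$. Write $b=-ia\in LS_h(\mathcal{M})$, so $a=ib$, and apply Theorem~\ref{t_bs}: there are a self-adjoint central $c$ and unitaries $\{u_\varepsilon\}_{\varepsilon>0}\subset\mathcal{M}$ with $|[b,u_\varepsilon]|\ge(1-\varepsilon)|b-c|$. Set $d=i(b-c)=a-ic$; since $c$ is central, $\delta_d=\delta_a=\delta$ on $\mathcal{M}$, and $d$ is skew-adjoint. As $\delta(u_\varepsilon)=[a,u_\varepsilon]=i[b,u_\varepsilon]$, one has $|d|=|b-c|\le(1-\varepsilon)^{-1}|\delta(u_\varepsilon)|$ with $\delta(u_\varepsilon)\in\mathcal{E}$, so (\ref{e_solid}) gives $d\in\mathcal{E}$, while (\ref{e1}) and (\ref{e2}) yield $\|d\|_{\mathcal{E}}=\||d|\|_{\mathcal{E}}\le(1-\varepsilon)^{-1}\|\delta(u_\varepsilon)\|_{\mathcal{E}}\le(1-\varepsilon)^{-1}\|\delta\|_{\mathcal{M}\to\mathcal{E}}$. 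Letting $\varepsilon\downarrow 0$ gives $\|d\|_{\mathcal{E}}\le\|\delta\|_{\mathcal{M}\to\mathcal{E}}$, which finishes the self-adjoint case and, with the first paragraph, the theorem.

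The step I expect to be the main obstacle is the innerness of the extension $\overline\delta$ on the finite part: Theorem~\ref{t1} applies only after $t(\mathcal{M})$-continuity is in hand, and on the finite summand that continuity is exactly what the finite-trace machinery of Lemmas~\ref{l_e1}--\ref{l_e5} delivers through the $\sigma$-finite reduction. The remaining delicate point is bookkeeping: keeping each local implementer skew-adjoint across the central decomposition and the gluing, so that Theorem~\ref{t_bs} can be invoked once, globally, on the self-adjoint operator $b=-ia$.
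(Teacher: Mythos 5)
Your proposal is correct and follows essentially the same route as the paper's proof: extension to $LS(\mathcal{M})$ via \cite[Theorem 4.8]{BCS3}, central decomposition into a properly infinite summand and summands carrying faithful normal finite traces, Lemma \ref{l_e5} together with Theorems \ref{t1} and \ref{t2} for innerness, and Theorem \ref{t_bs} to push the implementing operator into $\mathcal{E}$ with the stated norm bounds, with the factor $2$ coming from the $Re/Im$ splitting. The differences are purely organizational (you reduce to the self-adjoint case at the outset, glue per-summand implementers via Lemma \ref{ll5} instead of gluing continuity via \cite[Cor. 2.8]{BCS3} before one global application of Theorem \ref{t1}, and invoke Theorem \ref{t_bs} once on the skew-adjoint implementer rather than twice), and each of these variants is sound.
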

\begin{proof}
According to \cite[Theorem 4.8]{BCS3} there exists a derivation
$\overline{\delta}: LS(\mathcal{M})\rightarrow
LS(\mathcal{M})$ such that $\overline{\delta}(x)=\delta(x)$
for all $x\in\mathcal{M}$.

Choose a central decomposition of unit $\{z_\infty,z_i\}_{j\in J}$ such that $\mathcal{M}z_\infty$ is a properly infinite von Neumann algebra and on every von Neumann algebra $\mathcal{M}z_j$ there exists a faithful normal finite trace.
By \cite[Theorem 3.3]{BCS3} the derivation  $\overline{\delta}^{(z_\infty)}:=\overline{\delta}|_{LS(\mathcal{M}z_\infty)}:LS(\mathcal{M}z_\infty)\rightarrow LS(\mathcal{M}z_\infty)$ is $t(\mathcal{M}z_\infty)$-continuous. Lemma \ref{l_e5} implies that every derivation
$\overline{\delta}^{(z_j)}:=\overline{\delta}|_{LS(\mathcal{M}z_j)}:LS(\mathcal{M}z_j)\rightarrow LS(\mathcal{M}z_j)$ is also  $t(\mathcal{M}z_j)$-continuous for all $j\in J$. In this case, according to \cite[Cor.2.8]{BCS3}, the derivation $\overline{\delta}$ is  $t(\mathcal{M})$-continuous.
By Theorem \ref{t1} the derivation $\overline{\delta}$ is inner.
Repeating the proof of Theorem \ref{t4} we obtain that there exists an element $d\in\mathcal{E}$ such that $\delta(x)=[d,x]$ for all $x\in\mathcal{M}$.

Now, suppose that $\delta^*=\delta$. In this case,
$[d+d^*,x]=[d,x]-[d,x^*]^*=\delta(x)-(\delta(x^*))^*=\delta(x)-\delta^*(x)=0$
for any $x\in\mathcal{M}$. Consequently, the operator
$Re(d)=(d+d^*)/2$ commutes with every elements from $\mathcal{M}$, and by Proposition  \ref{p2_2}, $Re(d)$ is a central element in the algebra $LS(\mathcal{M})$. Therefore we may suggest that
$\delta(x)=[d,x],\ x\in\mathcal{M}$, where $d=ia,\ a\in
\mathcal{E}_h$. According to Theorem \ref{t_bs} there exist
$c=c^*$ from the centre of the algebra $LS(\mathcal{M})$ and a family
$\{u_\varepsilon\}_{\varepsilon>0}$ of unitary operators from
$\mathcal{M}$ such that
$$|[a,u_\varepsilon]|\geq (1-\varepsilon)|a-c|.$$
For $b=ia-ic$ and $\varepsilon=1/2$ we have $$|b|=|a-c|\leq
2|[a,u_{1/2}]|=2|[-id,u_{1/2}]|=2[d,u_{1/2}]\in\mathcal{E}.$$
Consequently, $b\in\mathcal{E}$ (see (\ref{e_solid})), moreover,
$$\delta(x)=[d,x]=[ia,x]=[b,x]$$
for all $x\in\mathcal{M}$. Since
$$(1-\varepsilon)|b|=(1-\varepsilon)|a-c|\stackrel{(\ref{e_bs})}{\leq} |[a,u_\varepsilon]|=|[d,u_\varepsilon]|=|\delta(u_\varepsilon)|,$$
it follows that
$$(1-\varepsilon)\|b\|_{\mathcal{E}}\stackrel{(\ref{e2})}{\leq} \|\delta(u_\varepsilon)\|_{\mathcal{E}}\leq \|\delta\|_{\mathcal{M}\rightarrow\mathcal{E}}$$
for all $\varepsilon>0$, that implies the inequality
$\|b\|_{\mathcal{E}}\leq
\|\delta\|_{\mathcal{M}\rightarrow\mathcal{E}}$.

If $\delta^*=-\delta$, then taking $Im(d)$ instead of $Re(d)$ and repeating previous proof we obtain that $\delta(x)=[b,x]$, where
$b\in\mathcal{E}$ and $\|b\|_{\mathcal{E}}\leq
\|\delta\|_{\mathcal{M}\rightarrow\mathcal{E}}$.

Now, suppose that $\delta\neq\delta^*$ and $\delta\neq
-\delta^*$. Equality (\ref{e1}) implies that
\begin{gather*}
\begin{split}
\|\delta^*\|_{\mathcal{M}\rightarrow\mathcal{E}}&=\sup\{\|\delta(x^*)^*\|_{\mathcal{E}}:\
\|x\|_{\mathcal{M}}\leq 1\}\\&= \sup\{\|\delta(x)\|_{\mathcal{E}}:\
\|x\|_{\mathcal{M}}\leq
1\}=\|\delta\|_{\mathcal{M}\rightarrow\mathcal{E}}.
\end{split}
\end{gather*}
Consequently,
$$\|Re(\delta)\|_{\mathcal{M}\rightarrow\mathcal{E}}=
2^{-1}\|\delta+\delta^*\|_{\mathcal{M}\rightarrow\mathcal{E}}\leq
\|\delta\|_{\mathcal{M}\rightarrow\mathcal{E}}.$$
Similarly,
$\|Im(\delta)\|_{\mathcal{M}\rightarrow\mathcal{E}}\leq
\|\delta\|_{\mathcal{M}\rightarrow\mathcal{E}}$. Since
$(Re(\delta))^*=Re(\delta)$, $(Im(\delta))^*=Im(\delta)$, there exist $d_1,d_2\in\mathcal{E}$, such that
$Re(\delta)(x)=[d_1,x],\ Im(\delta)(x)=[d_2,x]$ for all
$x\in\mathcal{M}$ and
$\|d_i\|_{\mathcal{E}}\leq\|\delta\|_{\mathcal{M}\rightarrow\mathcal{E}}$,
$i=1,2$. Taking $d=d_1+id_2$, we have that $d\in\mathcal{E}$,
$\delta(x)=(Re(\delta)+i\cdot
Im(\delta))(x)=[d_1,x]+i[d_2,x]=[d,x]$ for all $x\in\mathcal{M}$,
in addition $\|d\|_{\mathcal{E}}\leq
2\|\delta\|_{\mathcal{M}\rightarrow\mathcal{E}}$.
\end{proof}

Note that in \cite{B-S_d} in Theorem 16 it is given a variant of Theorem
 \ref{t_mbm} for a Banach $\mathcal{M}$-bimodule of locally
measurable operators with additional condition either one of separability or one of reflexivity. Theorem \ref{t_mbm}  given herein
removes all these assumptions from a Banach
$\mathcal{M}$-bimodule of locally measurable operators.

Let us point out one of important class of Banach $\mathcal{M}$-bimodules of locally
measurable operators connected with the theory of noncommutative integration.

Let $\mathcal{M}$ be a semifinite von Neumann algebra and $\tau$
be a faithful normal semifinite trace on $\mathcal{M}$. Let
$S(\mathcal{M},\tau)$ be the $*$-algebra of all $\tau$-measurable
operators affiliated with $\mathcal{M}$.

For each $x\in S(\mathcal{M},\tau)$ it is possible to define the
generalized singular value function
\begin{gather*}
\begin{split}
\mu_t(x)&=\inf\{\lambda>0:\tau(E_\lambda^\bot(|x|))\leq
t\}\\&=\inf\{\|x(\mathbf{1}-e)\|_\mathcal{M}:e\in\mathcal{P}(\mathcal{M}),\tau(e)\leq
t\},
\end{split}
\end{gather*}
which allows to define and  study a noncommutative version of
rearrangement invariant function spaces. It should be noted that
at the present time the theory of noncommutative rearrangement
invariant spaces has a significant place in researches of Banach
spaces (see e.g. \cite{DDdP}, \cite{K-S}).

Let $\mathcal{E}$ be a linear subspace in $S(\mathcal{M},\tau)$ equipped with a Banach norm $\|\cdot\|_{\mathcal{E}}$ with the following property:
$$\text{If}\ x\in S(\mathcal{M},\tau),\ y\in\mathcal{E}\
\text{and}\ \mu_t(x)\leq\mu_t(y)\ \text{then}\ x\in\mathcal{E}\
\text{and}\ \|x\|_{\mathcal{E}}\leq\|y\|_{\mathcal{E}}.$$ In this
case, the pair $(\mathcal{E},\|\cdot\|_{\mathcal{E}})$ is called
\emph{rearrangement invariant spaces of measurable operators}.
Every rearrangement invariant spaces  of measurable operators is a
Banach $\mathcal{M}$-bimodule \cite{DDdP}, and therefore Theorem
\ref{t_mbm} implies the following

\begin{corollary}
\label{c_last} Let $(\mathcal{E},\|\cdot\|_{\mathcal{E}})$ be a
rearrangement invariant spaces of measurable operators, affiliated
with a semifinite von Neumann algebra $\mathcal{M}$ and with a
faithful semifinite normal trace $\tau$. Then any derivation
$\delta:\mathcal{M}\rightarrow\mathcal{E}$ is continuous and
there exists $d\in\mathcal{E}$ such that $\delta(x)=[d,x]$ for all
$x\in\mathcal{M}$ and $\|d\|_{\mathcal{E}}\leq
2\|\delta\|_{\mathcal{M}\rightarrow\mathcal{E}}.$
\end{corollary}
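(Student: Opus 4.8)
The plan is to reduce the assertion to the already-established innerness results on all of $LS(\mathcal{M})$ (Theorems \ref{t1} and \ref{t2}) and then to extract the norm bounds from the spatiality estimate of Theorem \ref{t_bs}. First I would apply the extension theorem \cite[Theorem 4.8]{BCS3} to produce a derivation $\overline{\delta}\colon LS(\mathcal{M})\rightarrow LS(\mathcal{M})$ with $\overline{\delta}|_{\mathcal{M}}=\delta$; note that $\overline{\delta}(\mathcal{M})=\delta(\mathcal{M})\subset\mathcal{E}$. If I can show that $\overline{\delta}$ is $t(\mathcal{M})$-continuous, then Theorem \ref{t1} furnishes an $a\in LS(\mathcal{M})$ with $\overline{\delta}(x)=[a,x]$, and the innerness of $\delta$ on $\mathcal{M}$ follows at once.

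The crucial step is the $t(\mathcal{M})$-continuity of $\overline{\delta}$, and this I would obtain by a central decomposition argument. Choose a central decomposition $\{z_\infty,z_j\}_{j\in J}$ of $\mathbf{1}$ in which $\mathcal{M}z_\infty$ is properly infinite and each $\mathcal{M}z_j$ carries a faithful normal finite trace. On the properly infinite summand, the restriction $\overline{\delta}^{(z_\infty)}$ is automatically $t(\mathcal{M}z_\infty)$-continuous by \cite[Theorem 3.3]{BCS3}. On each finite summand, $\overline{\delta}^{(z_j)}$ maps $\mathcal{M}z_j$ into $z_j\mathcal{E}$, which is again a Banach bimodule of locally measurable operators over $z_j\mathcal{M}$, so Lemma \ref{l_e5} applies and gives $t(\mathcal{M}z_j)$-continuity. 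Reassembling these local statements by \cite[Cor.2.8]{BCS3} yields the desired $t(\mathcal{M})$-continuity of $\overline{\delta}$.

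Having $\overline{\delta}(x)=[a,x]$, the remaining task is to choose the implementing operator inside $\mathcal{E}$, and here I would repeat the argument of Theorem \ref{t4}. Writing $a=a_1+ia_2$ with $a_1,a_2$ self-adjoint, I observe that $[a_i,x]\in\mathcal{E}$ for all $x\in\mathcal{M}$; applying Theorem \ref{t_bs} with $\varepsilon=1/2$ gives central $c_i\in\mathcal{Z}_h(LS(\mathcal{M}))$ and unitaries $u_i\in\mathcal{M}$ with $2|[a_i,u_i]|\geq|a_i-c_i|$. Since $[a_i,u_i]\in\mathcal{E}$ and $\mathcal{E}$ is solid in the sense of (\ref{e_solid}), the operators $d_i:=a_i-c_i$ lie in $\mathcal{E}$, and $d:=d_1+id_2\in\mathcal{E}$ implements $\delta$ because $c_1,c_2$ are central. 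For the sharp bounds I would first treat the symmetric cases: if $\delta^*=\delta$ then $[d+d^*,x]=0$ forces $Re(d)$ to be central, so one may take $d=ia$ with $a=a^*\in\mathcal{E}_h$; Theorem \ref{t_bs} together with (\ref{e_bs}) and (\ref{e2}) then yields $(1-\varepsilon)\|a-c\|_{\mathcal{E}}\leq\|\delta(u_\varepsilon)\|_{\mathcal{E}}\leq\|\delta\|_{\mathcal{M}\rightarrow\mathcal{E}}$ for every $\varepsilon>0$, whence an implementing $b\in\mathcal{E}$ with $\|b\|_{\mathcal{E}}\leq\|\delta\|_{\mathcal{M}\rightarrow\mathcal{E}}$; the case $\delta^*=-\delta$ is symmetric via $Im(d)$. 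For general $\delta$ I would apply this to the self-adjoint parts $Re(\delta)$ and $Im(\delta)$, noting $\|Re(\delta)\|_{\mathcal{M}\rightarrow\mathcal{E}},\|Im(\delta)\|_{\mathcal{M}\rightarrow\mathcal{E}}\leq\|\delta\|_{\mathcal{M}\rightarrow\mathcal{E}}$ by (\ref{e1}), obtaining $d_1,d_2\in\mathcal{E}$ of norm $\leq\|\delta\|_{\mathcal{M}\rightarrow\mathcal{E}}$ and setting $d=d_1+id_2$, so that $\|d\|_{\mathcal{E}}\leq 2\|\delta\|_{\mathcal{M}\rightarrow\mathcal{E}}$.

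I expect the main obstacle to be the continuity step rather than the spatiality estimate. The finite-trace summands rest on Lemma \ref{l_e5}, whose proof via the closed graph theorem and Ringrose's automatic continuity theorem is the genuine analytic core; in particular one must verify carefully that each $z_j\mathcal{E}$ inherits the Banach bimodule structure over $z_j\mathcal{M}$ so that Lemma \ref{l_e5} legitimately applies summand by summand, and that the reassembly via \cite[Cor.2.8]{BCS3} is valid across the full central decomposition.
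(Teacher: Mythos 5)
Your argument is, in substance, a re-derivation of the paper's Theorem \ref{t_mbm}: the extension via \cite[Theorem 4.8]{BCS3}, the central decomposition $\{z_\infty,z_j\}_{j\in J}$ into a properly infinite summand and finite-trace summands, continuity on the first by \cite[Theorem 3.3]{BCS3} and on the others by Lemma \ref{l_e5}, reassembly by \cite[Cor.2.8]{BCS3}, innerness of the extension by Theorem \ref{t1}, descent to an implementing operator in $\mathcal{E}$ via Theorem \ref{t_bs}, and the norm bounds through the self-adjoint/anti-self-adjoint cases --- this is precisely the paper's proof of Theorem \ref{t_mbm}. The paper's proof of the corollary itself consists of a single observation: every rearrangement invariant space of measurable operators is a Banach $\mathcal{M}$-bimodule of locally measurable operators (citing \cite{DDdP}), after which Theorem \ref{t_mbm} applies verbatim. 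So your route is mathematically the same; you have simply inlined the theorem rather than invoking it.

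The one step you omit is exactly that bridging observation, and it is the only step specific to this corollary. The hypothesis on $\mathcal{E}$ is formulated through the singular value function: $x\in S(\mathcal{M},\tau)$, $y\in\mathcal{E}$, $\mu_t(x)\leq\mu_t(y)$ imply $x\in\mathcal{E}$ and $\|x\|_{\mathcal{E}}\leq\|y\|_{\mathcal{E}}$. Every tool you invoke --- Lemma \ref{l_e5}, the solidity property (\ref{e_solid}), the norm properties (\ref{e1}) and (\ref{e2}) --- is stated for Banach $\mathcal{M}$-bimodules satisfying (\ref{ChIVeq21}), so before any of it can be used you must check that rearrangement invariance yields this structure. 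The check is short: for $u,v\in\mathcal{M}$ and $x\in\mathcal{E}$ one has $\mu_t(uxv)\leq\|u\|_{\mathcal{M}}\|v\|_{\mathcal{M}}\mu_t(x)$, whence $uxv\in\mathcal{E}$ and $\|uxv\|_{\mathcal{E}}\leq\|u\|_{\mathcal{M}}\|v\|_{\mathcal{M}}\|x\|_{\mathcal{E}}$, while completeness is part of the hypothesis; this also disposes of your worry about the summands, since $x\mapsto z_jx$ is then a contractive idempotent on $\mathcal{E}$, so $z_j\mathcal{E}$ is a closed subspace and hence a Banach $z_j\mathcal{M}$-bimodule to which Lemma \ref{l_e5} applies. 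Finally, note that the corollary also asserts norm continuity of $\delta$, which you never address: it is immediate from innerness, since $\|\delta(x)\|_{\mathcal{E}}=\|dx-xd\|_{\mathcal{E}}\leq 2\|d\|_{\mathcal{E}}\|x\|_{\mathcal{M}}$ by (\ref{ChIVeq21}), but again this uses the bimodule inequality you left unverified.
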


\end{document}